\definecolor{darkbrown}{RGB}{77, 45, 29}
\def\rr{{\mathbb R}}
\def\rn{{{\rr}^n}}
\def\fz{\infty}
\def\supp{{\mathop\mathrm{\,supp\,}}}
\def\dist{{\mathop\mathrm {\,dist\,}}}
\def\diam{{\mathop\mathrm {\,diam\,}}}
\def\abdiam{{\mathop\mathrm {\hspace{0.2mm} diam}}}
\def\boz{{\Omega}}
\def\r{\right}
\def\lf{\left}
\def\cdini{C^{1,\mathrm{Dini}}}
\newtheorem{thm}{Theorem}[section]
\newtheorem{lem}[thm]{Lemma}
\newtheorem{prop}[thm]{Proposition}
\newtheorem{cor}[thm]{Corollary}
\newtheorem{rem}[thm]{Remark}
\numberwithin{equation}{section}
\begin{document}
\arraycolsep=1pt
\author{Renjin Jiang, Tianjun Shen, Sibei Yang \& Houkun Zhang}
\title{{\bf Heat kernel estimates, fractional Riesz transforms and applications  on exterior domains}
 \footnotetext{\hspace{-0.35cm} 2010 {\it Mathematics
Subject Classification}. Primary  42B20; Secondary  42B37, 35K08, 35J08.
\endgraf{
{\it Key words and phrases: heat kernel, exterior domain, fractional Riesz transform}
\endgraf}}
\date{}}
\maketitle

\begin{center}
\begin{minipage}{11.5cm}\small
{\noindent{\bf Abstract}.
In this paper, we derive sharp two side heat kernel estimate on exterior $C^{1,1}$ domains in the plane, and  sharp
upper heat kernel bound on exterior $C^{1,\mathrm{Dini}}$ domains in $\mathbb{R}^n$, $n\ge 2$.  Estimates for Green's function
and Riesz potentials on exterior domains in the plane are also presented. Based on the heat kernel estimates, we show the boundedness
of the fractional Riesz transforms on exterior $C^{1,\mathrm{Dini}}$ domains in $\mathbb{R}^n$, $n\ge 2$. Some further applications
to product and chain rules and nonlinear Schr\"odinger equation are also presented.
}\end{minipage}
\end{center}
\vspace{0.2cm}
\tableofcontents

\section{Introduction} \hskip\parindent
%
%
%
%
%
%
%
%
%
In this paper, we consider the heat kernel and the fractional Riesz transform for the Dirichlet operator on certain exterior domain. Here a domain $\Omega:= \mathbb{R}^n \setminus \overline{U}$ is an exterior $C^{1,1}$ ($\cdini$) domain, if $U$ is a bounded $C^{1,1}$ ($\cdini$) domain. 
Moreover, we shall assume that $\Omega$ is connected, since the heat kernel on bounded domains are clear (cf. \cite{davies89,zha03}). 
Denote by $\Delta$ the Laplacian operator, and by $\Delta_\Omega$ the Dirichlet Laplacian on $\Omega$; see for instance \cite{KVZ16}.

We say a bounded domain $U \subset \mathbb{R}^n$ is a $C^{1,1}$ domain, if for each point $x_0 \in \partial U$ there exist $r >0$ and a $C^{1,1}$ function
(i.e., $C^1$ function whose derivatives are Lipschitz continuous)
$\gamma: \mathbb{R}^{n-1} \rightarrow \mathbb{R}$ such that
(upon relabeling and reorienting the coordinates axes if necessary)
in a new coordinate system $(x', x^n) = (x^1, \dots, x^{n-1}, x^n)$,
$$U \cap B(x_0,r) = \{ x\in B(x_0,r): x^n > \gamma(x')\};$$
see also \cite{DEK18,Ev10,FGS86}.
The classes of bounded $\cdini$ domains is defined analogously, requiring
that the defining functions $\gamma$ have first order derivatives which are uniformly
Dini continuous. Recall that a nonnegative measurable function $\omega:\, (0,a)\to \rr$ is a Dini function 
provided that 
$$\omega(t)\sim \omega(s)$$
whenever $t/2\le s\le t$ and $0<t<a$, and 
$$\int_0^t\frac{\omega(s)}{s}\,ds<\infty\,\ \forall\, t\in (0,a).$$
Here and in what follows, the \emph{symbol} $f\sim g$ means $f\lesssim g$ and $g\lesssim f$, which stands for $f\le Cg$ and $g\le Ch$, respectively, for some harmless positive constant $C$. A function $g$ defined on $\Omega$ is called uniformly Dini continuous, if for $r>0$, 
$$\rho_g(r):=\sup_{x\in\Omega} \sup_{y,z\in \Omega\cap B(x,r)}|g(y)-g(z)|$$
is a Dini function. Note that Dini continuity is stronger than continuity but weaker than H\"older continuity.

\subsection{Heat kernels and Riesz potentials}\hskip\parindent
Let us denote by $p(t,x,y)$ the heat kernel on $\mathbb{R}^n$,
that is by definition the minimal positive fundamental solution to
the heat equation on $\mathbb{R}^n$. We also denote by $p_{\Omega}(t,x,y)$
the Dirichlet heat kernel in $\Omega$ which is a minimal positive solution of the
following equation
$$
\begin{cases}
  \partial_t u(t,x) = \Delta u(x), & \, \forall \, x \in \Omega, t>0, \\
  u(x,t)= 0, & \, \forall \, x \in \partial \Omega, t>0,  \\
  u(x,0) = \delta_y & \, \forall \, x \in \Omega,
\end{cases}
$$
for any $y \in \Omega$, where $\delta_y$ is the Dirac function; see \cite{gsc02}.

Much effort has been devoted
to the precise estimation of heat kernels in various
Riemannian manifolds as well as Euclidean domains; see \cite{davies87,davies89,davies97,gsc02,gsa11,zha02,zha03}
and references therein.
For the   case of the exterior domain,
the upper and lower bounds of $p_\Omega$ for
 $x, y$ being away from the boundary of $\Omega$ are established by Grigor'yan and Saloff-Coste \cite{gsc02}.
The boundary behavior of the heat kernel was later discovered by Zhang \cite{zha03}. In particular, for an exterior $C^{1,1}$ domain
in $\rn$ with $n\ge 3$, it holds that

\vspace{0.2cm}
{\noindent {\bf{Theorem A. }}
{Let $\Omega \subset \mathbb{R}^n$ be an exterior $C^{1,1}$ domain, where $n \ge 3$. Then, for any $x, y \in \Omega$ and $t >0$,
\[
    p_{\Omega}(t,x,y)
    \sim \frac{1}{t^{n/2}}
\left(\frac{\rho(x)}{\sqrt t\wedge 1}\wedge 1\right)
\left(\frac{\rho(y)}{\sqrt t\wedge 1}\wedge 1\right)e^{-\frac{|x-y|^2}{ct}}.
\]}}
Here and in what follows, $\rho(x):= \dist(x,\Omega^c)$ for any $x \in \mathbb{R}^n$, $\Omega^c:=\rn\backslash\Omega$, and for any $a,b\in\mathbb{R}$,
$a\wedge b:=\min\{a,b\}$ and $a\vee b:=\max\{a,b\}$. 

To derive the upper and lower bound for the heat kernel,
their proof relies on the boundary Harnack inequality developed in \cite{FGS86}
and the two-sided estimate for the Green function, which requires the domain to be $C^{1,1}$ (see \cite[p. 335]{Bo00} and also \cite{gw82}).
By applying the method that takes advantage of harmonic weights for parabolic manifolds developed in \cite{gsc02} (see also \cite{gsa11}), we are able to give a sharp upper and lower bound of the heat kernel as follows.

\begin{thm} \label{main-heat-kernel}
Suppose that $\Omega \subset \mathbb{R}^2$ is an exterior $C^{1,1}$ domain. Then it holds for each $t>0$ and all $x,y\in\Omega$ that
$$p_\Omega(t,x,y) \sim   \frac{\log(e+\rho(x))\log(e+\rho(y))}{t\left(\log(e+\sqrt t)+\log(e+\rho(x)\vee\rho(y))\right)^2}
\left(\frac{\rho(x)}{\sqrt t\wedge 1}\wedge 1\right)
\left(\frac{\rho(y)}{\sqrt t\wedge 1}\wedge 1\right)e^{-\frac{|x-y|^2}{ct}}.
$$
\end{thm}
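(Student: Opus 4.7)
My plan is to follow the harmonic-weight (Doob transform) method of Grigor'yan and Saloff-Coste developed in \cite{gsc02,gsa11} for parabolic manifolds. The reason the statement here departs from Theorem A is that $\rr^2$ is recurrent, so $\Omega$ is parabolic at infinity and no purely Gaussian-type bound can hold in its presence; the idea is to remove this parabolic end by a multiplicative change of gauge, reducing the problem to two-sided Gaussian bounds on a weighted manifold.

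\emph{Step 1 (harmonic weight).} I first produce a positive function $h$ on $\Omega$ that is harmonic in $\Omega$, vanishes on $\partial\Omega$, and grows logarithmically at infinity. One obtains $h$ by exhausting $\Omega$ by $\Omega\cap B(0,R)$, solving the Dirichlet problem with outer data $\log R$ and zero on $\partial\Omega$, and letting $R\to\infty$. The boundary Harnack principle for $C^{1,1}$ domains from \cite{FGS86}, together with the classical planar Green function bounds, then yields
$$
h(x)\sim (\rho(x)\wedge 1)\log(e+\rho(x))\quad\text{uniformly in } x\in\Omega.
$$

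\emph{Step 2 ($h$-transform and weighted Gaussian bounds).} Setting $\wz p(t,x,y):=p_\Omega(t,x,y)/(h(x)h(y))$, a direct computation using $\Delta h=0$ shows that $\wz p$ is the heat kernel of the symmetric operator $\Delta+2\nabla\log h\cdot\nabla$ on the weighted manifold $(\Omega, h^2\,dx)$. I then verify the weighted volume doubling property and a scale-invariant $L^2$ Poincar\'e inequality on this manifold; by the standard characterization of Gaussian bounds this yields
$$
\wz p(t,x,y)\sim \frac{1}{V_h(x,\sqrt t)}\,e^{-|x-y|^2/(ct)},\qquad V_h(x,r):=\int_{B(x,r)\cap\Omega} h(y)^2\,dy.
$$
Using Step 1 and splitting according to whether $\rho(y)\ls 1$ or $\rho(y)\gs 1$, one verifies case by case that
$$
V_h(x,r)\sim r^2\bigl[(\rho(x)\vee r)\wedge 1\bigr]^2\log^2(e+\rho(x)\vee r).
$$

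\emph{Step 3 (reassembly).} Substituting Steps 1 and 2 into $p_\Omega=h(x)h(y)\wz p(t,x,y)$ with $r=\sqrt t$, and invoking the elementary equivalences
$$
\frac{\rho(x)\wedge 1}{(\rho(x)\vee \sqrt t)\wedge 1}\sim \frac{\rho(x)}{\sqrt t\wedge 1}\wedge 1
$$
and, valid in the on-diagonal regime $|x-y|^2\ls t$,
$$
\log(e+\rho(x)\vee\sqrt t)\,\log(e+\rho(y)\vee\sqrt t)\sim \bigl[\log(e+\sqrt t)+\log(e+\rho(x)\vee\rho(y))\bigr]^2,
$$
produces the claimed formula (the off-diagonal discrepancy being absorbed into a slightly enlarged constant $c$ in the Gaussian exponent).

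\emph{Main obstacle.} The hardest part will be Step 2, namely verifying the weighted doubling and Poincar\'e inequality on $(\Omega, h^2\,dx)$. Near $\partial\Omega$ this reduces, by the $C^{1,1}$ regularity, to a Hardy-type inequality with the weight $\rho(x)^2$; at infinity it amounts to a Poincar\'e inequality with the logarithmic weight $(\log|x|)^2$ on large annuli in $\rr^2\setminus U$. Matching the two regimes across the transition zone $\rho\sim 1$ through Harnack chains, and bookkeeping the correct dependence on $\rho(x)$, $\rho(y)$ and $\sqrt t$ when transferring estimates back through the $h$-transform, is where most of the technical effort will lie.
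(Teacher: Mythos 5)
Your proposal is mathematically sound in outline, but it takes a genuinely different route from the paper, and it is worth being precise about where the two diverge and which parts carry the real weight.

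\textbf{Where you differ.} You change gauge by the \emph{r\'eduite}: a harmonic function $h$ that vanishes on $\partial\Omega$, so that $h(x)\sim(\rho(x)\wedge1)\log(e+\rho(x))$ and the $h$-transformed kernel $\widetilde p=p_\Omega/(h(x)h(y))$ has no remaining boundary condition. The paper instead uses a positive harmonic weight $\phi$ that is bounded away from zero on $\partial\Omega$ (with $\phi\sim\log(e+\rho)$), so the transformed kernel $p_\Omega^{\phi}$ still solves a \emph{Dirichlet} problem; the near-boundary decay factors $\bigl(\tfrac{\rho}{\sqrt t\wedge1}\wedge1\bigr)$ are then produced separately, via a gradient bound for the Poisson equation (using Dini regularity of the boundary, Theorem~2.6) for the upper bound, and via the parabolic comparison theorem of Fabes--Garofalo--Salsa combined with a new two-sided estimate of the first eigenfunction (Theorem~2.8) for the lower bound. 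The paper explicitly acknowledges your route --- the r\'eduite, $\phi-1$, together with \cite[Cor.~5.10 \& (5.6)]{gsa11} --- as a plausible alternative, and explains why it was not chosen: (i) the paper wants a unified upper-bound argument valid on $\cdini$ domains in \emph{every} dimension $n\ge 2$, which falls out cleanly from the Dirichlet-transform picture; and (ii) establishing the sharp boundary lower bound $h(x)\gtrsim\rho(x)$ for the r\'eduite is not easier than the eigenfunction estimate.

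\textbf{Two cautions.} First, your Step~1 is where the hard analysis actually lives: $h(x)\gtrsim\rho(x)$ near $\partial\Omega$ genuinely needs the $C^{1,1}$ regularity (on Lipschitz boundaries one only gets a power of $\rho$), and \cite{FGS86} gives boundary Harnack for \emph{parabolic} solutions; for the elliptic boundary Harnack on $C^{1,1}$ domains you should rather invoke the Green-function bounds of \cite{gw82,zhao86} as the paper does for its eigenfunction estimate. If you were hoping this step were lighter than the paper's, the authors' remark warns that it is not. Second, in Step~2 you cannot literally invoke ``the standard characterization of Gaussian bounds'' on a complete manifold: the weighted space $(\Omega,h^2\,dx)$ has a degenerate weight on $\partial\Omega$ and $\Omega$ is incomplete, so the right framework is the Dirichlet-space / inner-uniform-domain machinery of \cite{gsa11}, which verifies doubling and Poincar\'e for precisely this weighted measure on inner uniform domains and then deduces the Gaussian bounds. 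Since an exterior $C^{1,1}$ domain is inner uniform and its inner metric is comparable to the Euclidean one, \cite[Cor.~5.10 \& (5.6)]{gsa11} applies, but you should say so rather than treat it as standard manifold folklore. With these two points addressed, your reassembly in Step~3 (including the off-diagonal absorption of the logarithmic discrepancy into the Gaussian constant $c$) is correct and does reproduce the claimed formula.
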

By \cite{gsc02}, to prove the above result, it is enough to consider only the case when at least one point $x$ or $y$ is near the boundary.
We shall first give a sharp upper bound of the heat kernel
on exterior $\cdini$ domains, whose proof works for general dimension $n\ge 2$.
Towards the lower bound on exterior $C^{1,1}$ domains in the plane, we first derive a lower bound for the first eigenfunction
on bounded $C^{1,1}$ domains by a lifting argument.
Then using the boundary Harnack inequality for parabolic equations from \cite{FGS86} gives the lower bound for the heat kernel, adopting a methodology similar to that presented in  \cite{zha03}.

We wish to point out that  \cite[Corollary 5.10 \ \& (5.6)]{gsa11} give a two-side estimate of the heat kernel in terms of positive harmonic weights with zero boundary, which is called
r\'eduite there. This harmonic weight  can be taken  as \(\phi-1\) where $\phi$ is as in \eqref{auxiliary-function} below. We believe it is also possible to derive the above theorem
from  \cite[Corollary 5.10 \ \& (5.6)]{gsa11} together with studying the boundary regularity of \(\phi-1\).
We adopt the current approach due to two reasons: one is that we want to give a unified approach to upper bound on $\cdini$ domains for all dimensions $n\ge 2$,
the other is that establishing the lower bound of the harmonic weight \(\phi-1\) near the boundary does not seem simpler than that of the first eigenfunction.

In particular, if $\Omega$ is an exterior $\cdini$ domain, we have the upper bound.
\begin{thm} \label{cor:upper-bound}
  Suppose that $\Omega \subset \mathbb{R}^n$ is an exterior $C^{1,\mathrm{Dini}}$ domain, $n \ge 2$.  Then it holds  for each $t>0$ and all $x,y\in\Omega$ that
  $$p_\Omega(t,x,y) \lesssim
  \begin{dcases}
  \frac{\log(e+\rho(x))\log(e+\rho(y))}{t\left(\log(e+\sqrt t)+\log(e+\rho(x)\vee\rho(y))\right)^2}\left(\frac{\rho(x)}{\sqrt t\wedge 1}\wedge 1\right)\left(\frac{\rho(y)}{\sqrt t\wedge 1}\wedge 1\right)e^{-\frac{|x-y|^2}{ct}}, & \ n=2,\\
\frac{1}{t^{n/2}}
\left(\frac{\rho(x)}{\sqrt t\wedge 1}\wedge 1\right)
\left(\frac{\rho(y)}{\sqrt t\wedge 1}\wedge 1\right)e^{-\frac{|x-y|^2}{ct}}, &\ n\ge 3.
\end{dcases}
$$
\end{thm}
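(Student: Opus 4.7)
The plan is to decouple the upper bound into three ingredients: an on-diagonal interior bound (carrying the dimension-dependent decay and, for $n=2$, the logarithmic recurrence correction), the Gaussian off-diagonal factor $e^{-|x-y|^2/ct}$, and the linear boundary-decay factor $(\rho(x)/(\sqrt t\wedge 1)\wedge 1)(\rho(y)/(\sqrt t\wedge 1)\wedge 1)$. I would assemble these in that order, obtaining a single estimate valid for all $n\geq 2$ with only the on-diagonal step changing with the dimension.

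For the on-diagonal part with $n\geq 3$, the domination $p_\Omega \leq p_{\mathbb{R}^n}$ immediately yields $p_\Omega(t,x,y) \lesssim t^{-n/2}$, so nothing beyond the eventual boundary step is needed. In $n=2$, recurrence of planar Brownian motion forces the logarithmic factor, and I would perform a Doob transform against the positive harmonic auxiliary function $\phi$ on $\Omega$ with $\phi|_{\partial\Omega}=1$ and $\phi(x)\sim \log(e+\rho(x))$ at infinity (the r\'eduite $\phi-1$ alluded to in the introduction via \cite{gsa11}). The transformed kernel $\tilde p_\Omega(t,x,y) = p_\Omega(t,x,y)/[(\phi(x)-1)(\phi(y)-1)]$ satisfies the standard on-diagonal heat-kernel bound with respect to the weighted volume $V_\phi(x,r)\sim r^2\bigl(\log(e+r)+\log(e+\rho(x))\bigr)^2$, and multiplying back by $(\phi(x)-1)(\phi(y)-1)$ reproduces precisely the quotient in the statement. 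The Gaussian off-diagonal factor is then attached by Davies' weighted $L^2$ method, whose only input is an on-diagonal bound for the weighted semigroup; the boundary weights are untouched.

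To insert the boundary decay factor, I would fix $y\in\Omega$ and view $u(t,x):= p_\Omega(t,x,y)$ as a caloric function in $x$ vanishing on $\partial\Omega$, and run a parabolic barrier argument on a tubular neighborhood $\{\rho\leq r_0\}$. Replacing the possibly irregular distance by a regularized distance $\tilde\rho$ (which for a $\cdini$ boundary can be chosen with $|\Delta\tilde\rho|$ controlled by the Dini modulus of $\nabla\gamma$), the function $w(t,x):= C\,[\tilde\rho(x)/(\sqrt t\wedge 1)]\,\Phi(t,x,y)$, where $\Phi$ denotes the on-diagonal-times-Gaussian bound from the previous step, is a supersolution of $\partial_t - \Delta$ for $C$ large enough and dominates $u$ on the parabolic boundary of the tubular neighborhood. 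Comparison then yields $u(t,x)\lesssim [\rho(x)/(\sqrt t\wedge 1)]\,\Phi(t,x,y)$ whenever $\rho(x)\leq \sqrt t\wedge 1$; the symmetric treatment in $y$ via the semigroup identity $p_\Omega(t,x,y) = \int p_\Omega(t/2,x,z)p_\Omega(t/2,z,y)\,dz$ recovers the product. The truncation at $\sqrt t\wedge 1$ simply records the two regimes: for $\sqrt t\leq 1$ the diffusion resolves the boundary at scale $\sqrt t$; for $\sqrt t\geq 1$ the relevant length scale is the fixed obstacle diameter.

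The main obstacle is the barrier step in the $\cdini$ setting. Because $\rho$ is only one derivative better than Lipschitz, producing a regularized distance whose Laplacian is integrable in the Dini sense, and verifying that the associated supersolution correctly absorbs the logarithmic weight on the sides of the tubular neighborhood when $n=2$, requires a careful Dini-mollification of the defining graph functions $\gamma$ together with a pointwise expansion of $\phi-1$ near $\partial\Omega$. Once this is carried out, the remaining estimates proceed along standard Dirichlet heat-kernel templates.
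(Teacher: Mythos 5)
Your three-step decoupling (on-diagonal, Gaussian tail, boundary factor) is structurally sound, and the first two steps broadly match the paper. However, there are two issues, one a confusion and one a genuine gap.

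First, a consistency problem in your $n=2$ Doob transform. You propose to divide $p_\Omega$ by $(\phi(x)-1)(\phi(y)-1)$ and claim the resulting kernel obeys the standard Li--Yau bound against the volume $V_\phi(x,r)\sim r^2(\log(e+r)+\log(e+\rho(x)))^2$, which is the volume of $\phi^2\,dx$, \emph{not} of $(\phi-1)^2\,dx$. Near $\partial\Omega$ one has $\phi-1\to 0$ (so $\tilde p_\Omega$ would already carry full boundary decay), while $\phi\sim 1$; the two measures are not comparable in the boundary layer. If you use $\phi-1$ you cannot then do a \emph{separate} barrier step for boundary decay --- you would double-count. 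The internally consistent version of your plan is the paper's: Doob-transform with $\phi$ (which is $\sim 1$ near $\partial\Omega$, so only the on-diagonal logarithm comes from this step), and produce the boundary decay afterward.

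Second, and more substantively, your mechanism for inserting the boundary factor is not the paper's, and it has an unresolved obstruction in the $\cdini$ setting. You build a parabolic supersolution $w=C\,[\tilde\rho/(\sqrt t\wedge 1)]\,\Phi$ via a regularized distance $\tilde\rho$. Verifying $(\partial_t-\Delta)w\ge 0$ produces the term $-\frac{1}{\sqrt t\wedge 1}\,(\Delta\tilde\rho)\,\Phi$, which requires a pointwise one-sided bound on $\Delta\tilde\rho$; for a $\cdini$ boundary the best available regularized distance has $|\nabla^2\tilde\rho(x)|\lesssim\omega(\rho(x))/\rho(x)$, which is Dini-\emph{integrable} but not bounded, and ``integrable in the Dini sense'' is not what the barrier needs. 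You correctly flag this as the main obstacle, but leave it unresolved, and it is exactly where a barrier proof breaks down outside $C^{1,1}$. The paper sidesteps this entirely: it invokes the interior/boundary $C^1$ estimate of Dong--Escauriaza--Kim for elliptic equations with Dini data to get the gradient bound
$$|\nabla_y p_\Omega^\phi(t,x,y)|\lesssim \frac{1}{\sqrt t\wedge 1}\,\frac{1}{\mu(x,\sqrt t)}\,e^{-|x-y|^2/(ct)},$$
and then uses $p_\Omega^\phi(t,x,y_0)=0$ for $y_0\in\partial\Omega$ nearest to $y$, together with the gradient theorem along the segment $yy_0$, to extract the factor $\rho(y)/(\sqrt t\wedge 1)$ directly; the factor in $x$ follows by symmetry and the semigroup identity. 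That argument needs only $C^{1}$ regularity of the solution (supplied by \cite{DEK18} under $\cdini$ hypotheses), never second derivatives of a distance function, which is precisely why it works on $\cdini$ domains and your barrier scheme, as described, does not.

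If you adopt the gradient-plus-FTC step in place of the barrier, and transform with $\phi$ rather than $\phi-1$, your outline collapses onto the paper's proof.
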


Compared to the parabolic problem, the elliptic counter part also attracts significant attention;
see \cite{Bo00,gw82,zhao86} and references therein.
This estimate plays a crucial role in potential analysis and mathematical physics.
For instance, the well-known three-G theorem can be derived from it (see e.g. \cite[(19)]{Bo00}).
Making use of the aforementioned heat kernel estimates, we establish a two side estimate for the Green function on
the exterior domains in the following way.
\begin{thm}\label{green-function}
Let $\Omega \subset \mathbb{R}^2$ be an exterior $C^{1,1}$ domain.
Denote by $\Gamma_\Omega(\cdot,\cdot)$ the Green function on $\Omega$. Then it holds for all $x,y\in\Omega$ that
$$
\Gamma_\Omega(x,y)\sim
\begin{dcases}
 1 + \log \frac{\rho(x)\wedge \rho(y)}{|x-y|},  &|x-y|< \rho(x)\wedge \rho(y)\wedge 1, \\
\frac{\left(\rho(x)\wedge 1\right)\left(\rho(y)\wedge 1\right)}{|x-y|^2\wedge 1}\log (e+\rho(x)\wedge \rho(y)), &\ \mbox{otherwise}.
\end{dcases}
$$
Moreover, if $\Omega$ is an exterior $\cdini$ domain, then the upper bound
of $\Gamma_\Omega$ still holds.
\end{thm}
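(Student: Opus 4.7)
The plan is to compute the Green function by integrating the heat kernel in time,
\[
\Gamma_\Omega(x,y)=\int_0^\infty p_\Omega(t,x,y)\,dt,
\]
and to use the two-sided estimate of Theorem \ref{main-heat-kernel} in the $C^{1,1}$ case and the upper bound of Theorem \ref{cor:upper-bound} in the $\cdini$ case. Set $a=\rho(x)$, $b=\rho(y)$ with $a\le b$ and $r=|x-y|$. The heat kernel bound is a product of four factors, namely $1/t$, the Gaussian $e^{-r^2/(ct)}$, two boundary factors of the form $\rho(\cdot)/(\sqrt t\wedge 1)\wedge 1$, and a two-dimensional long-time factor involving $\log(e+\sqrt t)$, and each factor is of simple form on the intervals determined by the scales $r^2,\ a^2\wedge 1,\ b^2\wedge 1,\ 1,\ (a\vee b)^2\vee 1$. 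The approach is to perform a case analysis on the ordering of these scales and integrate the dominant asymptotic form on each subinterval.

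Three types of contribution arise. For $0<t\lesssim r^2$ the Gaussian confines the integral to an $O(1)$ term, which accounts for the ``$1$'' in the first line. In the on-diagonal regime $r<a\wedge b\wedge 1$, the triangle inequality $|a-b|\le r<a\wedge b$ forces $a\sim b$, so on the interval $(r^2,(a\wedge b)^2\wedge 1)$ all boundary and long-time factors are $\sim 1$ and the integrand reduces to $1/t$; this gives $\int_{r^2}^{(a\wedge b)^2\wedge 1}dt/t\sim \log\bigl((a\wedge b)/r\bigr)$. In the off-diagonal regime $r\ge a\wedge b\wedge 1$, the intervals on which one or both factors $\rho(\cdot)/\sqrt t$ drop below $1$ contribute, after elementary integration of polynomials against $dt/t$, the factor $(a\wedge 1)(b\wedge 1)/(r^2\wedge 1)$. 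Finally, for $t$ larger than all of the above scales, the substitution $u=\log(e+\sqrt t)$ converts
\[
\int\frac{dt}{t\left(\log(e+\sqrt t)+\log(e+a\vee b)\right)^2}\sim\int\frac{2\,du}{(u+\log(e+a\vee b))^2}\sim\frac{1}{\log(e+a\vee b)};
\]
multiplied by the numerator $\log(e+a)\log(e+b)$ this yields the factor $\log(e+a)\log(e+b)/\log(e+a\vee b)=\log(e+a\wedge b)$, which is the logarithmic factor on the second line.

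The principal difficulty is the case-by-case bookkeeping of the relative sizes of $r$, $a$, $b$, and $1$, together with the verification that on each subinterval the chosen asymptotic form genuinely dominates all four factors simultaneously. In the $C^{1,1}$ case the two-sided heat kernel estimate delivers matching lower bounds via the same scale analysis, since in each case the dominant regime saturates both directions of the heat kernel bound; in the $\cdini$ case only the upper bound of Theorem \ref{cor:upper-bound} is available, and integrating it gives the upper bound on $\Gamma_\Omega$ claimed in the moreover part. The algebraic identity $\log(e+a)\log(e+b)/\log(e+a\vee b)=\log(e+a\wedge b)$ (valid when $a\le b$) is the key two-dimensional simplification producing the clean logarithmic factor $\log(e+\rho(x)\wedge\rho(y))$ in the final formula.
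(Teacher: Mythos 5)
Your proposal follows essentially the same route as the paper: both integrate the heat kernel in time, exploit the two-sided bound of Theorem~\ref{main-heat-kernel} (resp.\ the upper bound of Theorem~\ref{cor:upper-bound}), and organize the computation by comparing the scales $|x-y|^2$, $(\rho(x)\wedge\rho(y))^2\wedge 1$, $1$, and $(\rho(x)\vee\rho(y))^2\vee 1$. The paper's only slight variation is to first invoke the reduction $\Gamma_\Omega(x,y)\sim\int_{|x-y|^2}^\infty p_\Omega(t,x,y)\,dt$ (citing \cite{ly86,gsa11}) rather than treating the $(0,|x-y|^2)$ tail directly via the Gaussian as you do, but these are interchangeable; it then proceeds by the same case analysis (first on $|x-y|\gtrless 1$, then on the positions of $\rho(x)\wedge\rho(y)$ and $\rho(x)\vee\rho(y)$ relative to $1$), and the elementary identity $\log(e+a)\log(e+b)/\log(e+a\vee b)=\log(e+a\wedge b)$ you isolate is indeed the clean algebraic step that produces the final logarithmic factor.
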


Integrating the estimates in Theorem \ref{cor:upper-bound}, we obtain upper bound
 for the Riesz potential.

\begin{prop}\label{riesz-potential-prop}
Let $\Omega $ be an exterior $\cdini$ domain in $\rn$, $n\ge 2$. Then for $0<s<n$, it holds  for all $x,y\in\Omega$  that
$$(-\Delta_{\Omega})^{-\frac{s}{2}}(x, y)\lesssim
\begin{dcases}\frac{1}{|x-y|^{2-s}}
\frac{\log(e+\rho(x)\wedge \rho(y))}{\log(e+\rho(y)\vee\rho(x))}\left(\frac{\rho(x)}
{|x-y| \wedge 1} \wedge 1\right)
\left(\frac{\rho(y)}{|x-y| \wedge 1} \wedge 1\right), &\ n=2,\\
\frac{1}{|x-y|^{n-s}}\left(\frac{\rho(x)}
{|x-y| \wedge 1} \wedge 1\right)
\left(\frac{\rho(y)}{|x-y| \wedge 1} \wedge 1\right), &\ n\ge 3.
\end{dcases}
$$
\end{prop}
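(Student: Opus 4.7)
The plan is to read off the Riesz potential bound from the heat kernel upper bound of Theorem \ref{cor:upper-bound} via the subordination identity
$$(-\Delta_\Omega)^{-s/2}(x,y) \;=\; \frac{1}{\Gamma(s/2)}\int_0^\infty t^{s/2-1}\,p_\Omega(t,x,y)\,dt,$$
which converges at $t=\infty$ whenever $0<s<n$ thanks to the global bound $p_\Omega\lesssim t^{-n/2}$. Setting $r:=|x-y|$, the contribution of the range $t<r^2$ is harmless because the Gaussian factor $e^{-r^2/(ct)}$ supplies arbitrary polynomial decay there; the essential task is to estimate the integral over $t\ge r^2$, on which the exponential can be dropped after slightly enlarging $c$.

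On $[r^2,\infty)$ I would partition according to the position of $\sqrt{t}$ relative to the thresholds $\rho(x)$, $\rho(y)$, $1$. Within each subinterval the factors $\frac{\rho(x)}{\sqrt{t}\wedge 1}\wedge 1$ and $\frac{\rho(y)}{\sqrt{t}\wedge 1}\wedge 1$ simplify to explicit monomials in $t$, $\rho(x)$, $\rho(y)$, so the $t$-integral becomes a finite sum of elementary power integrals $\int_a^b t^\alpha\,dt$. In dimension $n\ge 3$ the exponents $\alpha$ are always strictly less than $-1$, so each integral evaluates to a constant times a power of its left endpoint; summing over the subintervals and separating the cases $r\le 1$ and $r>1$ collapses the total into the claimed product $r^{s-n}\bigl(\tfrac{\rho(x)}{r\wedge 1}\wedge 1\bigr)\bigl(\tfrac{\rho(y)}{r\wedge 1}\wedge 1\bigr)$, since in every regime the dominant contribution comes from the endpoint $\sqrt{t}\sim r\vee\rho(x)\vee\rho(y)\vee 1$.

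For $n=2$ the same skeleton applies, but one must carry along the logarithmic prefactor
$$L(t):=\frac{\log(e+\rho(x))\log(e+\rho(y))}{\bigl(\log(e+\sqrt{t})+\log(e+\rho(x)\vee\rho(y))\bigr)^2}$$
through the partition. When $\sqrt{t}\lesssim \rho(x)\vee\rho(y)$, $L(t)$ is comparable to the constant $\log(e+\rho(x)\wedge\rho(y))/\log(e+\rho(x)\vee\rho(y))$ and factors out of the integral to give exactly the claimed ratio; when $\sqrt{t}\gtrsim \rho(x)\vee\rho(y)$, $L(t)$ decays like $\log^{-2}(e+\sqrt{t})$, and integrating it against the associated power of $t$ produces the extra decay $\log(e+\rho(x)\vee\rho(y))/\log(e+r)$ needed precisely in the regime $r\gg \rho(x)\vee\rho(y)$, which combines with the boundary contribution $\rho(x)\rho(y)/r^2$ to match the stated bound.

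The main obstacle is this $n=2$ bookkeeping: several regimes determined by the relative ordering of $r,\rho(x),\rho(y),1$ must be treated separately, and in each one the logarithmic prefactor and the boundary factors have to be matched so that the final answer collapses into the clean ratio $\log(e+\rho(x)\wedge\rho(y))/\log(e+\rho(x)\vee\rho(y))$ without introducing spurious $\log/\log$ losses. Once this is in place, the $n\ge 3$ estimate falls out as the simpler specialization with $L(t)\equiv 1$.
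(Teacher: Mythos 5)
Your proposal is essentially correct and follows the same skeleton as the paper's proof (given for $n=2$ in Proposition 3.3 and remarked for $n\ge 3$ in Proposition 3.4): subordination of the Riesz potential to the heat kernel upper bound of Theorem \ref{cor:upper-bound}, splitting the $t$-integral at $r^2:=|x-y|^2$, using Gaussian decay below $r^2$ and dropping the exponential above, then a case analysis by the thresholds $\rho(x),\rho(y),1$ that reduces everything to elementary power integrals. The one genuine point of divergence is your treatment of the $n=2$ logarithmic factor: you propose to carry
\[
L(t)=\frac{\log(e+\rho(x))\log(e+\rho(y))}{\bigl(\log(e+\sqrt t)+\log(e+\rho(x)\vee\rho(y))\bigr)^2}
\]
through the $t$-partition, distinguishing $\sqrt t\lesssim\rho(x)\vee\rho(y)$ from $\sqrt t\gtrsim\rho(x)\vee\rho(y)$. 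The paper instead observes at the very start that, since $\log(e+\sqrt t)\ge0$,
\[
L(t)\le\frac{\log(e+\rho(x))\log(e+\rho(y))}{\bigl(\log(e+\rho(x)\vee\rho(y))\bigr)^2}
=\frac{\log(e+\rho(x)\wedge\rho(y))}{\log(e+\rho(x)\vee\rho(y))}
\]
\emph{uniformly in $t$}, pulls this constant out, and then the $n=2$ computation becomes literally the same power-integral bookkeeping as $n\ge3$ with this prefactor attached. Your route does work (the extra $\log^{-2}(e+\sqrt t)$ decay you identify in the tail only helps), but it adds a layer of bookkeeping that the paper's one-line observation removes entirely. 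If you pursue your version, you should state and use this uniform bound anyway, since it shows immediately that any remaining $t$-integral cannot overshoot the claimed logarithmic ratio.

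One small inaccuracy to fix: you assert that in every regime the dominant contribution comes from $\sqrt t\sim r\vee\rho(x)\vee\rho(y)\vee1$. Since all exponents are strictly below $-1$, each piece of $\int_{r^2}^\infty$ is controlled by its \emph{left} endpoint, and across pieces the largest contribution comes from $\sqrt t\sim r$ (the smallest left endpoint), where the heat kernel boundary factor is exactly $\bigl(\frac{\rho(x)}{r\wedge1}\wedge1\bigr)\bigl(\frac{\rho(y)}{r\wedge1}\wedge1\bigr)$. For instance when $r<\rho(x)\wedge\rho(y)\wedge1$ the dominant scale is $\sqrt t\sim r$, not $1$. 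This is cosmetic and does not affect the final estimate, but the dominance principle as you phrased it is incorrect and should be replaced by ``the lower endpoint $\sqrt t\sim r$ dominates, and the boundary factor evaluated there matches the claim.''
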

By using the subordinate formula one can also give two side estimate
for the kernel of semigroup $e^{-t(-\Delta_\Omega)^s}$, which we leave to interested readers.
We note that, for the fractional Dirichlet  non-local Laplacian operator with zero
exterior condition $(-\Delta)_\Omega^s$, two-side estimates for the heat kernel of $e^{-t(-\Delta)_\Omega^s}$ were established in \cite{cks10}.

\subsection{Fractional Riesz transform and applications}\hskip\parindent
With the above estimates of heat kernels and Riesz potentials at hand, we are able to move to the study of fractional Riesz transforms.
For $s\ge 0$ and $1<p<\infty$, let $\dot{H}^{s,p}_D(\Omega)$  be the completion
of $C^\infty_c(\Omega)$ under the norms
$$\|f\|_{\dot{H}^{s,p}_D(\Omega)}:=\left\|(-\Delta_\Omega)^{s/2}f\right\|_{L^p(\Omega)}.$$
Similarly, $\dot{H}^{s,p}(\rn)$ is defined with $\Omega$ replaced by $\rn$ in the above definition.
The Sobolev space $\dot{H}_0^{s,p}(\Omega)$ is defined as the completion of  $C^\infty_c(\Omega)$ in
$\dot{H}^{s,p}(\rn)$. The inhomogeneous Sobolev spaces  ${H}^{s,p}_D(\Omega)$, ${H}_0^{s,p}(\Omega)$ are then defined as the
completion of $C^\infty_c(\Omega)$ under the norms $\left\|(-\Delta_\Omega)^{s/2}f\right\|_{L^p(\Omega)}+\|f\|_{L^p(\Omega)}$
and  $\left\|(-\Delta)^{s/2}f\right\|_{L^p(\rn)}+\|f\|_{L^p(\Omega)}$, respectively.

We are interested to see whether the two spaces $\dot{H}^{s,p}_0(\Omega)$ and  $\dot{H}^{s,p}_D(\Omega)$ are equivalent or not.
The study of such equivalence has a deep root in  both harmonic analysis and PDEs, see \cite{Fu67,gr67,hs09,JL24,JY24,KVZ16,kvz16b,lsz12,se72} for instance.

For the case $s=1$, by the well-known property of classical Riesz transforms
$$\left\|\nabla (-\Delta)^{-1/2}\right\|_{L^p(\rn)\to L^p(\rn)}\le C(n,p)<\infty$$
and the reverse inequality
$$
\left\|(-\Delta)^{1/2}f\right\|_{L^p(\rn)} \lesssim C(n,p)
\left\|\,|\nabla f|\,\right\|_{L^p(\rn)},
\quad \forall\, f \in C_c^{\infty}(\rn),
$$
for $1<p<\infty$ (see e.g. \cite{stw71}), $\dot{H}_0^{1,p}(\Omega)$
coincides with the usual Sobolev space $\dot{W}_0^{1,p}(\Omega)$, which is defined via the distributional gradient.
So the question of equivalence between $\dot{H}^{1,p}_0(\Omega)$ and  $\dot{H}^{1,p}_D(\Omega)$  reduces to
the boundedness of the Riesz transform, i.e., does the inequality
$$C^{-1}\|\nabla f\|_{L^p(\Omega)}\le \left\|(-\Delta_\Omega)^{1/2}f\right\|_{L^p(\Omega)}\le C\|\nabla f\|_{L^p(\Omega)}$$
holds? We refer the reader to \cite{acdh04,cch06} for pioneering  studies on manifolds.
In the case of exterior domains, the question has been addressed by \cite{hs09,lsz12}
in case of radially symmetric functions, by \cite{KVZ16} for domains outside convex obstacles,
and characterization of boundedness has recently been established in \cite{JL24}.

For the fractional case, $0<s<2$ and $s\neq 1$, Killip et al. \cite{KVZ16} also proved the boundedness of the fractional Riesz transform
in the sharp range for exterior domains outside convex obstacles in $\rn$, $n\ge 3$. With the help of our heat kernel and Riesz potential estimates (cf. Theorem \ref{cor:upper-bound} and Proposition \ref{riesz-potential-prop}),
following an approach analogous to that in \cite{KVZ16}, we establish the boundedness of the fractional Riesz transform as follows.
This extends the result in \cite{KVZ16} by generalizing it from exterior convex domains to exterior $\cdini$ domains,
and filling the left part for the planar case.

\begin{thm}\label{main-1}
Suppose that $\Omega\subset\rn$ is a domain outside a bounded $C^{1,\mathrm{Dini}}$ obstacle, $n\ge 2$. Let $1<p<\infty$.
Then for $0<s<\min\{n/p,1+1/p\}$, it holds for all $f\in C^\infty_c(\Omega)$ that
$$\left\|(-\Delta )^{s/2}f\right\|_{L^{p}(\rn)}\sim \left\|(-\Delta_\Omega)^{s/2}f\right\|_{L^{p}(\Omega)}.$$
Consequently, for such $s$ and $p$, the two spaces $\dot{H}^{s,p}_0(\Omega)$ and  $\dot{H}^{s,p}_D(\Omega)$
coincides with equivalent norms.
\end{thm}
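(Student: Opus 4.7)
The plan is to follow the strategy of Killip, Visan and Zhang~\cite{KVZ16}, now accessible on exterior $\cdini$ domains thanks to the upper heat-kernel bound of Theorem~\ref{cor:upper-bound} and the Riesz-potential estimate of Proposition~\ref{riesz-potential-prop}. Extending any $f\in C^\infty_c(\Omega)$ by zero to $\rn$, I would first write both $(-\Delta)^{s/2}f$ and $(-\Delta_\Omega)^{s/2}f$ through the Balakrishnan subordination formula
$$\lambda^{s/2} = \frac{1}{|\Gamma(-s/2)|}\int_0^\infty t^{-s/2-1}\bigl(1-e^{-t\lambda}\bigr)\,dt,$$
so that the difference becomes
$$\bigl[(-\Delta_\Omega)^{s/2}-(-\Delta)^{s/2}\bigr]f(x) = \frac{1}{|\Gamma(-s/2)|}\int_0^\infty t^{-s/2-1}\bigl[e^{t\Delta}f(x) - e^{t\Delta_\Omega}f(x)\bigr]\,dt.$$
Since $f$ is supported in $\Omega$, this operator has the positive integral kernel
$$K(x,y) := \frac{1}{|\Gamma(-s/2)|}\int_0^\infty t^{-s/2-1}\bigl[p(t,x,y) - p_\Omega(t,x,y)\bigr]\,dt,$$
which becomes the central object to control.

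The second and main step is to bound $K$. The maximum principle gives $p_\Omega\le p$, so $K$ is positive; the boundary decay factors $(\rho(x)/\sqrt t)\wedge 1$ and $(\rho(y)/\sqrt t)\wedge 1$ appearing in Theorem~\ref{cor:upper-bound} force $p-p_\Omega$ to be small compared with $p$ whenever $\sqrt t$ stays well below $\rho(x)\wedge\rho(y)$. Splitting the $t$-integration at the scales $t\sim|x-y|^2$ and $t\sim \rho(x)^2\wedge\rho(y)^2$, and using the global Gaussian bound for $p$ together with Theorem~\ref{cor:upper-bound}, a direct computation (analogous to the one behind Proposition~\ref{riesz-potential-prop}) yields a pointwise bound of the form
$$K(x,y) \lesssim \frac{1}{|x-y|^{n-s}}\,\Phi(x,y),$$
where $\Phi(x,y)$ is a boundary weight carrying at least one extra factor $\rho(x)^\alpha|x-y|^{-\alpha}$ or $\rho(y)^\alpha|x-y|^{-\alpha}$, with the logarithmic corrections of Theorem~\ref{cor:upper-bound} appearing in the planar case.

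The third step is to upgrade this kernel estimate to an $L^p$ bound. Decomposing $K=K_1+K_2$, where $K_1$ is dominated by the kernel of $(-\Delta)^{-s/2}$ on $\rn$ (handled by Hardy--Littlewood--Sobolev together with the $L^p$-boundedness of classical imaginary powers), and $K_2$ carries a $\rho^{-\beta}$ factor in one of the variables, one would close the estimate by a fractional Hardy inequality of the type
$$\bigl\|\rho^{-\beta}g\bigr\|_{L^p(\Omega)} \lesssim \bigl\|(-\Delta)^{\beta/2}g\bigr\|_{L^p(\rn)},\qquad g\in C^\infty_c(\Omega),$$
valid in the range $0<\beta p<1$ on the uniform domain $\Omega$. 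This produces
$$\bigl\|\bigl[(-\Delta_\Omega)^{s/2}-(-\Delta)^{s/2}\bigr]f\bigr\|_{L^p(\Omega)} \lesssim \bigl\|(-\Delta)^{s/2}f\bigr\|_{L^p(\rn)},$$
and a symmetric argument gives the reverse comparison, yielding the claimed two-sided bound. Density of $C^\infty_c(\Omega)$ in both homogeneous Sobolev spaces then identifies $\dot{H}^{s,p}_0(\Omega)$ with $\dot{H}^{s,p}_D(\Omega)$.

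The main obstacle is securing the sharp range $0<s<\min\{n/p,\,1+1/p\}$. The constraint $s<n/p$ is dictated by the scaling of the Riesz potential and by Sobolev embedding. The threshold $s<1+1/p$ is more delicate: it reflects the fact that, above this exponent, $C^\infty_c(\Omega)$ ceases to be dense in $\dot{H}^{s,p}(\rn)$ restricted to $\Omega$, as non-trivial Neumann-type traces at $\partial\Omega$ become visible. Reaching the sharp value requires passing the kernel bound through a gradient-form fractional Hardy inequality and exploiting the $\cdini$ regularity of $\partial\Omega$ (which enters through the sharp boundary factors in Theorem~\ref{cor:upper-bound}). In the planar case the extra $\log$ factors in Theorem~\ref{cor:upper-bound} require careful bookkeeping, but the $\log(e+\rho)$ denominator built into Proposition~\ref{riesz-potential-prop} absorbs them when integrating in $t$, so no additional loss occurs at the $L^p$ level.
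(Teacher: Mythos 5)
Your high-level strategy — represent $(-\Delta_\Omega)^{s/2}-(-\Delta)^{s/2}$ by the Balakrishnan (subordination) formula, bound the resulting kernel, and close with Hardy inequalities plus Schur's test — is a genuinely different route from the paper, which instead works through the Littlewood--Paley square-function characterizations of $\dot H^{s,p}$ (Theorem~\ref{littlewood-paley}) and compares the two square functions term by term (Proposition~\ref{prop:compare}). In principle your route can succeed: since $\min\{n/p,1+1/p\}<2$ for all $p>1$, the whole claimed range has $0<s<2$, so the subordination formula applies and one obtains a positive kernel $K(x,y)=\int_0^\infty t^{-s/2-1}[p(t,x,y)-p_\Omega(t,x,y)]\,dt$. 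This kernel is \emph{not} singular on the diagonal like $|x-y|^{-n-s}$ would suggest, precisely because the diagonal singularities of $p$ and $p_\Omega$ cancel, and the resulting argument can be more direct than the Littlewood--Paley route.

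However, there are two genuine gaps in the way you propose to execute this.

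First, you claim that the boundary decay factors $(\rho(\cdot)/\sqrt t)\wedge 1$ in the upper bound for $p_\Omega$ (Theorem~\ref{cor:upper-bound}) "force $p-p_\Omega$ to be small whenever $\sqrt t\ll\rho(x)\wedge\rho(y)$." This is backwards: in that regime those factors are $\approx 1$, the upper bound for $p_\Omega$ degenerates to the plain Gaussian $t^{-n/2}e^{-|x-y|^2/(ct)}$, and the inequality $p-p_\Omega\le p$ gives no decay in $\rho(x),\rho(y)$ at all. The crucial estimate
$$0\le p(t,x,y)-p_\Omega(t,x,y)\le Ct^{-n/2}e^{-\frac{|x-y|^2+\rho(x)^2+\rho(y)^2}{ct}}$$
cannot be obtained from an upper bound on $p_\Omega$ alone; it needs a lower-bound comparison for $p_\Omega$ in terms of a nearby "free" heat kernel. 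The paper proves exactly this as Theorem~\ref{heat-comparison}, by a method-of-images argument on an inscribed cube $Q_x\subset\Omega$ centered at $x$. Without this ingredient (or an equivalent one) the kernel $K(x,y)$ is not controlled, and integrating it in $t$ produces the wrong power of $|x-y|$; the correct bound is $K(x,y)\lesssim(|x-y|^2+\rho(x)^2+\rho(y)^2)^{-(n+s)/2}$, not $|x-y|^{-(n-s)}$ times a boundary weight.

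Second, you only invoke a Hardy inequality with respect to the Euclidean fractional Laplacian, $\|\rho^{-\beta}g\|_{L^p(\Omega)}\lesssim\|(-\Delta)^{\beta/2}g\|_{L^p(\rn)}$, and assert that "a symmetric argument gives the reverse comparison." The symmetric step needs a Hardy inequality against the \emph{Dirichlet} fractional Laplacian, $\|f/\rho^s\|_{L^p(\Omega)}\lesssim\|(-\Delta_\Omega)^{s/2}f\|_{L^p(\Omega)}$ (Proposition~\ref{prop:HardyForDomain}), which is not a formal dualization of the Euclidean one. It is proved from the Riesz potential bounds (Propositions~\ref{riesz-potential-1} and~\ref{lem:riesz-poten}) via Schur's test with weights, and it is precisely \emph{this} step that generates the sharp constraint $s<1+1/p$ — through the admissible range of the weight exponent $\alpha$ in Schur's test — rather than any density or trace phenomenon on $\rn$ as you suggest. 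Also, the range condition you state, $0<\beta p<1$, is not the one that appears; the relevant conditions are $s<n/p$ for the Euclidean side and $s<\min\{n/p,1+1/p\}$ for the Dirichlet side. Finally, note that your concern about logarithmic corrections in the planar case is misplaced at this stage: the comparison bound of Theorem~\ref{heat-comparison} is purely Gaussian with no logs, so $K(x,y)$ carries no logarithmic factors; the logs only appear (and are absorbed) in the Riesz-potential estimate feeding into the Dirichlet Hardy inequality.
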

\begin{rem}\rm
(i) It was shown in \cite{hs09} that the Riesz
transform $\nabla(-\Delta_\Omega)^{-1/2}$ on the exterior of the unit ball is \emph{not} bounded on $L^p$ for $p\in(2,\fz)$ if $n=2$,
and $p\in[n,\fz)$ if $n\ge3$; see also \cite[Remark 3.1]{JL24}. For general fractional order, \cite[Proposition 7.1 \& Proposition 7.2]{KVZ16} showed that  for $n/p\le s<2$ or $s\ge 1+\frac {1}{p}$, $n\ge 3$,
the equivalence
$$\left\|(-\Delta )^{s/2}f\right\|_{L^{p}(\rn)}\sim \left\|(-\Delta_\Omega)^{s/2}f\right\|_{L^{p}(\Omega)}$$
does not hold.

Similarly to \cite[Proposition 7.1 \& Proposition 7.2]{KVZ16},
and by using the heat kernel estimate (Theorem \ref{main-heat-kernel}),
we can
show that for $n=2$, $2/p< s<2$  the conclusion of Theorem \ref{main-1} does not hold; see Remark \ref{unboundedness} below.
Note that for $n=2$, $1<p<\infty$, it holds that $2/p<1+1/p$.

(ii) The borderline case $s=n/p$, $n=2$, is missing from Theorem \ref{main-1} or from part (i) of this remark.
Note that for $s=1$ and $p=2$, Theorem \ref{main-1} is trivially true. We therefore expect that Theorem \ref{main-1}
should also be true for $0<s<2$ and $s=2/p$.
\end{rem}

With the aid of Theorem \ref{main-1},
we obtain fractional  product and chain rules on exterior $\cdini$ domains.
These following two corollaries are straightforward combinations of Theorem \ref{main-1} and the results in the Euclidean case,
with relevant details for the latter provided in  \cite[Propositions 3.1 and 3.3]{CW91}; see also \cite{Ta20}.

\begin{cor}\label{c1.1}
	Let $\Omega \subset \mathbb{R}^n$ be an exterior $\cdini$ domain, $n \ge 2$. Assume that $G \in C^1(\mathbb{C})$, $1 < p, p_1, p_2 < \infty$ and
	$0< s < \min \{1 + 1/p_2, n/p_2\}$, where $1/p = 1/p_1 + 1/p_2$. Then it holds that
\[
\left\|(-\Delta_\Omega)^{s/2} G(f)\right\|_{L^p(\Omega)}
	\lesssim \left\|G'(f)\right\|_{L^{p_1}(\Omega)}
	\left\|(-\Delta_\Omega)^{s/2} f \right\|_{L^{p_2}(\Omega)},
\]
uniformly for any $f \in C_c^\infty(\Omega)$.
\end{cor}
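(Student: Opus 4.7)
The plan is to reduce the estimate to the known Euclidean fractional chain rule by transferring $(-\Delta_\Omega)^{s/2}$ to $(-\Delta)^{s/2}$ via Theorem \ref{main-1} on both sides, applying the Euclidean estimate, and transferring back. The admissibility condition $0<s<\min\{1+1/p_2,\,n/p_2\}$ is exactly the range in Theorem \ref{main-1} at the exponent $p_2$; since $1/p=1/p_1+1/p_2$ forces $p<p_2$, the inequality $\min\{n/p,\,1+1/p\}>\min\{n/p_2,\,1+1/p_2\}$ also places $s$ in the admissible range at $p$, so the transfer is legal on both sides.

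Concretely, given $f\in C_c^\infty(\Omega)$, I extend by zero to $\rn$ and, after the routine normalization $G(0)=0$ (and, if needed, subtracting a linear term from $G$ so that $G'(0)=0$ and the right-hand side of the Euclidean chain rule is finite), observe that $G(f)$ is compactly supported and $C^1$, with support contained in $\Omega$. A standard mollification argument places $G(f)$ in the common space $\dot{H}_0^{s,p}(\Omega)=\dot{H}_D^{s,p}(\Omega)$, so Theorem \ref{main-1} yields
\[
\left\|(-\Delta_\Omega)^{s/2} G(f)\right\|_{L^p(\Omega)} \lesssim \left\|(-\Delta)^{s/2} G(f)\right\|_{L^p(\rn)}.
\]
Applying the Euclidean fractional chain rule \cite[Proposition 3.3]{CW91} (see also \cite{Ta20}) then gives
\[
\left\|(-\Delta)^{s/2} G(f)\right\|_{L^p(\rn)} \lesssim \left\|G'(f)\right\|_{L^{p_1}(\rn)} \left\|(-\Delta)^{s/2} f\right\|_{L^{p_2}(\rn)}.
\]
Since $G'(0)=0$ and $f$ is supported in $\Omega$, the first factor coincides with $\|G'(f)\|_{L^{p_1}(\Omega)}$; a final application of Theorem \ref{main-1} at exponent $p_2$ replaces the second factor by $\|(-\Delta_\Omega)^{s/2}f\|_{L^{p_2}(\Omega)}$, and combining the three estimates produces the claim.

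The main technical point is that Theorem \ref{main-1}, stated in the excerpt for $C_c^\infty(\Omega)$-functions, must be applied to $G(f)$, which is merely $C^1$ with compact support inside $\Omega$. This is handled by mollification, exploiting the density of $C_c^\infty(\Omega)$ in the common space $\dot{H}_0^{s,p}(\Omega)=\dot{H}_D^{s,p}(\Omega)$; in particular, each mollified approximant lies in $C_c^\infty(\Omega)$ and converges to $G(f)$ in the relevant fractional Sobolev norm. The remaining verifications (the normalization of $G$, the admissibility of $s$ at both $p$ and $p_2$, and the fact that compact support makes the passage between $\Omega$ and $\rn$ transparent) are routine bookkeeping.
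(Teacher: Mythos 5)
Your proposal follows exactly the route the paper itself indicates (the paper states the corollary is a ``straightforward combination'' of Theorem \ref{main-1} and the Euclidean fractional chain rule of Christ--Weinstein/Taylor), so this is the same approach, with you supplying the bookkeeping the paper omits. You correctly observe the two nontrivial checks: Theorem~\ref{main-1} must be admissible at both $p$ and $p_2$ (which holds since $p<p_2$ forces $\min\{n/p,1+1/p\}>\min\{n/p_2,1+1/p_2\}$), and a density argument is needed because $G(f)$ is only $C^1_c(\Omega)$, not $C^\infty_c(\Omega)$. One small simplification: the ``subtract a linear term so $G'(0)=0$'' step is unnecessary --- if $G'(0)\neq0$ then $G'(f)\equiv G'(0)$ outside $\operatorname{supp} f$, so $\|G'(f)\|_{L^{p_1}(\Omega)}=\infty$ on the unbounded domain $\Omega$ and the claimed inequality holds vacuously; when $G'(0)=0$ the usual normalization $G(0)=0$ is all that is needed for the Euclidean chain rule.
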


\begin{cor}
Let $\Omega \subset \mathbb{R}^n$ be an exterior $\cdini$ domain, $n \ge 2$. Then for all $f, g \in C_c^{\infty}(\Omega)$, it holds that
\[
\left\|(-\Delta_\Omega)^{s/2} (fg)\right\|_{L^p(\Omega)}
\lesssim\left\|(-\Delta_\Omega)^{s/2} f\right\|_{L^{p_1}(\Omega)}
\|g \|_{L^{p_2}(\Omega)}
+\|f \|_{L^{q_1}(\Omega)}\left\|(-\Delta_\Omega)^{s/2}g\right\|_{L^{q_2}(\Omega)},
\]
where the exponents satisfy $1 < p, p_1,q_2 <\infty$,
$1 < p_2, q_1 \leq \infty$, $1/p = 1/p_1 + 1/p_2 = 1/q_1 + 1/q_2$,
and $0 < s < \min\{1 + 1/p_1, 1+ 1/q_2, n/p_1, n/q_2\}$.
\end{cor}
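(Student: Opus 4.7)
The plan is to reduce the inequality on the exterior domain $\Omega$ to the corresponding Kato--Ponce (fractional Leibniz) inequality on the full Euclidean space $\mathbb{R}^n$, using Theorem \ref{main-1} as a transfer tool. Given $f,g\in C_c^\infty(\Omega)$, the product $fg$ also lies in $C_c^\infty(\Omega)$, and extending by zero we may view $f$, $g$ and $fg$ as elements of $C_c^\infty(\mathbb{R}^n)$ with support in $\Omega$.

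First, I would apply Theorem \ref{main-1} to $fg$ with the exponent pair $(s,p)$ to obtain
\[
\left\|(-\Delta_\Omega)^{s/2}(fg)\right\|_{L^p(\Omega)}\sim\left\|(-\Delta)^{s/2}(fg)\right\|_{L^p(\mathbb{R}^n)}.
\]
Next, I would apply the Euclidean fractional Leibniz rule from \cite[Proposition 3.3]{CW91} to the right-hand side to get
\[
\left\|(-\Delta)^{s/2}(fg)\right\|_{L^p(\mathbb{R}^n)}\lesssim \left\|(-\Delta)^{s/2}f\right\|_{L^{p_1}(\mathbb{R}^n)}\|g\|_{L^{p_2}(\mathbb{R}^n)}+\|f\|_{L^{q_1}(\mathbb{R}^n)}\left\|(-\Delta)^{s/2}g\right\|_{L^{q_2}(\mathbb{R}^n)}.
\]
Then, applying Theorem \ref{main-1} once more, this time to $f$ with the pair $(s,p_1)$ and to $g$ with the pair $(s,q_2)$, converts the two Riesz-norm factors back into the Dirichlet-Laplacian norms on $\Omega$; since $f$ and $g$ are supported in $\Omega$, the $L^{p_2}$ and $L^{q_1}$ norms over $\mathbb{R}^n$ agree with those over $\Omega$. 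Chaining the three estimates yields the stated inequality.

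The only genuine point to check is that the assumed range $0<s<\min\{1+1/p_1,\,1+1/q_2,\,n/p_1,\,n/q_2\}$ is \emph{simultaneously} compatible with the three invocations of Theorem \ref{main-1}. Since $1/p=1/p_1+1/p_2=1/q_1+1/q_2$ with $p_2,q_1>1$, we have $p\le p_1$ and $p\le q_2$, hence $1+1/p\ge 1+1/p_1$ and $n/p\ge n/p_1$ (and similarly with $q_2$ in place of $p_1$). Therefore the hypothesis on $s$ automatically implies $0<s<\min\{1+1/p,n/p\}$, which is exactly what is needed to apply Theorem \ref{main-1} to $fg$. The hypotheses for the applications to $f$ and $g$ are of course built directly into the assumption. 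The endpoint cases $p_2=\infty$ or $q_1=\infty$ (where $p_1=p$ or $q_2=p$) reduce the bookkeeping but require no further argument.

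The main obstacle in this proof is essentially absent: once Theorem \ref{main-1} is in hand, the argument is a purely formal transfer, and the nontrivial content has been packaged into the Euclidean Kato--Ponce inequality of \cite{CW91}. The only mild subtlety is the verification that the admissible range of $s$ propagates consistently across the three uses of Theorem \ref{main-1}, which, as indicated above, follows from the monotonicity $p\le p_1$ and $p\le q_2$ dictated by the H\"older relations on the exponents.
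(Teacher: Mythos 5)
Your proposal is correct and follows exactly the route the paper intends: the paper explicitly describes this corollary as a "straightforward combination of Theorem~\ref{main-1} and the results in the Euclidean case" from \cite[Propositions~3.1 and~3.3]{CW91}, which is precisely the three-step transfer (Dirichlet to Euclidean for $fg$, apply Kato--Ponce, Euclidean to Dirichlet for $f$ and $g$) that you carry out. Your verification that $p\le p_1$ and $p\le q_2$ guarantee the admissible range of $s$ is the only nontrivial bookkeeping, and you handle it correctly.
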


With the chain rule, we can also extend the local well-posedness of the nonlinear Schr\"odinger equation (NLS), i.e.,
\begin{equation}\label{nls-2d-intro}
i\partial_t u=-\Delta_{\Omega} u \pm |u|^pu
\quad
\text{with }
u(x,0)=u_0(x)
\quad \text{and}\quad
u(x,t)|_{\partial\Omega}=0,
\end{equation}
to the planar case.
\begin{thm}\label{local-NLS}
Let $s\in (0,1)$, $p:=4/(2-2s)$, $r:=({6-2s})/({1+s})$, and $q:=({3-s})/({1-s})$. Let $\Omega\subset\rr^2$ be the exterior of a smooth compact strictly convex obstacle. There exists $\eta>0$ such that if $u_0\in H^s_D(\Omega)$ satisfies
$$\left\|(-\Delta_\Omega)^{\frac s2}e^{it\Delta_\Omega}u_0\right\|_{L_t^{q}L_x^r(I\times\Omega)}\le \eta$$
for some time interval $I$ containing $0$, then there is a unique strong $C_t^0\dot{H}_D^s(I\times\Omega)$  solution to the equation \eqref{nls-2d-intro}, and it holds that
$$\left\|(-\Delta_\Omega)^{\frac s2}u\right\|_{L_t^{q}L_x^r(I\times\Omega)}\lesssim \eta.$$
\end{thm}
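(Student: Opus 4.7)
The plan is to run a standard Kato-type fixed-point argument in a Strichartz-type space, using Corollary \ref{c1.1} to apply a fractional chain rule to the nonlinearity $G(z)=|z|^p z$ (which lies in $C^1(\mathbb{C})$ since $p=2/(1-s)>2$) and Theorem \ref{main-1} to transfer between $(-\Delta_\Omega)^{s/2}$ and the flat Riesz potential when invoking Sobolev embedding on $\mathbb R^2$.

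\textbf{Step 1 (Strichartz).} The pair $(q,r)$ obeys $2/q+2/r=1$, hence is admissible for the two-dimensional Schr\"odinger equation, and $(\infty,2)$ is the energy-endpoint admissible pair. On the exterior of a smooth, compact, strictly convex obstacle in $\mathbb R^2$, the full set of homogeneous and inhomogeneous Strichartz estimates for $e^{it\Delta_\Omega}$ is available via the results of Ivanovici--Planchon (and Ivanovici--Lebeau--Planchon for the diffractive/endpoint analysis). In particular, since $(-\Delta_\Omega)^{s/2}$ commutes with $e^{it\Delta_\Omega}$,
\[
\Bigl\|(-\Delta_\Omega)^{s/2}\!\!\int_0^t e^{i(t-\tau)\Delta_\Omega}F(\tau)\,d\tau\Bigr\|_{L_t^{q}L_x^{r}(I\times\Omega)}\lesssim\bigl\|(-\Delta_\Omega)^{s/2}F\bigr\|_{L_t^{1}L_x^{2}(I\times\Omega)}.
\]

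\textbf{Step 2 (Nonlinear estimate).} I apply Corollary \ref{c1.1} in dimension $n=2$ with exponents $p_2=r$ and $1/p_1=1/2-1/r$; a direct computation shows $s<\min\{2/r,1+1/r\}$ for every $s\in(0,1)$ (since $(1-s)^2>0$), so the hypotheses hold. Writing $G'(z)\sim |z|^p$, this gives
\[
\bigl\|(-\Delta_\Omega)^{s/2}(|u|^p u)\bigr\|_{L^2(\Omega)}\lesssim\|u\|_{L^{pp_1}(\Omega)}^{\,p}\bigl\|(-\Delta_\Omega)^{s/2}u\bigr\|_{L^{r}(\Omega)}.
\]
Computing $2/(pp_1)$ yields exactly $2/r-s$, i.e. $pp_1$ is the Sobolev conjugate of $r$ at smoothness $s$; extending $u\in C_c^\infty(\Omega)$ by zero and invoking Theorem \ref{main-1} at the admissible exponent $(s,r)$ together with flat Sobolev embedding on $\mathbb R^2$ gives
\[
\|u\|_{L^{pp_1}(\Omega)}\lesssim\bigl\|(-\Delta)^{s/2}u\bigr\|_{L^r(\mathbb R^2)}\sim\bigl\|(-\Delta_\Omega)^{s/2}u\bigr\|_{L^r(\Omega)}.
\]
The algebraic identity $p+1=q$ (immediate from $p=2/(1-s)$ and $q=(3-s)/(1-s)$) then lets me integrate in time by H\"older to obtain the key bound
\[
\bigl\|(-\Delta_\Omega)^{s/2}(|u|^p u)\bigr\|_{L_t^{1}L_x^{2}}\lesssim\bigl\|(-\Delta_\Omega)^{s/2}u\bigr\|_{L_t^{q}L_x^{r}}^{\,p+1}.
\]

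\textbf{Step 3 (Contraction).} Set $\Phi(u)(t):=e^{it\Delta_\Omega}u_0\mp i\int_0^t e^{i(t-\tau)\Delta_\Omega}|u|^p u\,d\tau$ and
\[
B:=\Bigl\{u:\bigl\|(-\Delta_\Omega)^{s/2}u\bigr\|_{L_t^{q}L_x^{r}(I\times\Omega)}\le 2\eta\Bigr\},
\]
metrized by $d(u,v):=\|u-v\|_{L_t^{q}L_x^{r}}$. The hypothesis on $u_0$ and Steps 1--2 give $\|(-\Delta_\Omega)^{s/2}\Phi(u)\|_{L_t^{q}L_x^{r}}\le\eta+C(2\eta)^{p+1}\le 2\eta$ for $\eta$ small. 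For the Lipschitz estimate I use the pointwise bound $||u|^p u-|v|^p v|\lesssim(|u|^p+|v|^p)|u-v|$ combined with H\"older in space and time (with the Sobolev estimate of Step 2 controlling $\|u\|_{L^{pp_1}}$) to deduce $d(\Phi(u),\Phi(v))\le\tfrac12 d(u,v)$. The Banach fixed-point theorem furnishes a unique fixed point in $B$, which is the desired solution; applying Strichartz once more with the energy pair $(\infty,2)$ on the left yields $u\in C^0_t\dot H_D^s(I\times\Omega)$ and the stated bound.

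\textbf{Main obstacle.} The conceptually non-routine ingredient is the availability of the full range of Strichartz estimates for $e^{it\Delta_\Omega}$ on the exterior of a strictly convex obstacle in the \emph{plane}; in $\mathbb R^n$ with $n\ge3$ these estimates go back to the work cited in \cite{KVZ16}, but the two-dimensional case relies on more delicate diffractive analysis. Once those estimates are in hand, the remaining work reduces to a careful bookkeeping of the exponents $s,p,q,r$ together with Theorem \ref{main-1} and Corollary \ref{c1.1}, which is essentially algebraic.
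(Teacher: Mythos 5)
Your proof is correct and follows essentially the same route as the paper: fix the inhomogeneous $L^{1}_{t}L^{2}_{x}$ dual pair in the Strichartz estimate (from Ivanovici, which the paper cites as \cite{I10}), apply the fractional chain rule of Corollary~\ref{c1.1} to the nonlinearity, use Sobolev embedding (transferred to the exterior domain via Theorem~\ref{main-1}) to close the estimate at the critical regularity, note $p+1=q$ to integrate in time by H\"older, and then run the Banach fixed-point argument. The exponent bookkeeping in your Step 2 checks out: with $p_{2}=r=(6-2s)/(1+s)$ and $1/p_{1}=1/2-1/r$ one gets $pp_{1}=2(3-s)/(1-s)^{2}$, which is indeed the Sobolev conjugate $1/r^{*}=1/r-s/2$, and the chain-rule hypothesis $s<2/r$ reduces to $(1-s)^{2}>0$, as you note. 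The only cosmetic deviation is that you run the contraction in a single ball $B$ controlled by the critical norm $\|(-\Delta_{\Omega})^{s/2}u\|_{L^{q}_{t}L^{r}_{x}}$, whereas the paper uses $B_{1}\cap B_{2}$ with additional explicit control of $\|u\|_{L^{\infty}_{t}H^{s}_{x}}$ and $\|u\|_{L^{q}_{t}L^{r}_{x}}$; both setups close by the same estimates (completeness of $B$ under the weak metric follows from weak lower semicontinuity as usual), and the paper recovers the $C^{0}_{t}\dot H^{s}_{D}$ regularity from $B_{1}$ just as you do from a final Strichartz application with the $(\infty,2)$ pair. You are also slightly more explicit than the paper in flagging Theorem~\ref{main-1} as the ingredient that licenses replacing $(-\Delta_{\Omega})^{s/2}$ by $(-\Delta)^{s/2}$ when invoking the flat Sobolev inequality.
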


The paper is organized as follows. In Section \ref{s2}, we provide the estimate for the heat kernel, and prove Theorems \ref{main-heat-kernel} and \ref{cor:upper-bound}. In Section \ref{s3},
we provide the estimates for Green's function and Riesz potentials,
and prove Theorem \ref{green-function} and Proposition \ref{riesz-potential-prop}.
In Section \ref{s4}, we provide the proof of Theorem \ref{main-1}, and in Section \ref{s5}, we prove Theorem \ref{local-NLS}.

Throughout the paper,
the letters $C$, $c$, $c'$, $c''$  denote positive constants
which are independent of the
main parameters, but may vary from line to line.
When the value of a constant is significant, it will be explicitly stated.
The symbol $A \lesssim B$ means that $A \leq C B$, and $A \sim B$ means $c A \leq B \leq C A$, for some harmless constants $c, C>0$. For any measurable subset $E$ of $\rn$, we denote by $E^c$ the set $\rn\setminus E$. Furthermore, for any $q\in[1,\fz]$, we denote by $q'$ its conjugate exponent, that is, $1/q+1/q'= 1$.

\section{Heat kernel estimate}\label{s2} \hskip\parindent
In this section,
we aim to derive the two-sided bound for the Dirichlet heat kernel on exterior domains in the planer case $\mathbb{R}^2$.
Recall that for the higher-dimensional case $\mathbb{R}^n$, where $n \ge 3$, Zhang \cite{zha03} discovered that for an exterior $C^{1,1}$ domain
$\Omega \subset \mathbb{R}^n$,
the heat kernel satisfies
\begin{align}\label{heat-n3-1}
p_\Omega(t,x,y)\sim t^{-n/2}\left(\frac{\rho(x)}{\sqrt t\wedge 1}\wedge 1\right)\left(\frac{\rho(y)}
{\sqrt t\wedge 1}\wedge 1\right)e^{-\frac{|x-y|^2}{ct}}.
\end{align}
See also \cite{zha02} for bounded $C^{1,1}$ domains.

However, boundary behaviors of the heat kernel on exterior domains are not clear for the planar case, up to present.
In fact, the estimate in \eqref{heat-n3-1} does not hold for the case $n=2$,  as the result from \cite{gsc02} shows that, on
a domain outside the unit ball, $\rr^2\setminus \overline{B(0,1)}$,  it holds for any $x,y$ with $|x|,|y|>C>1$ and $t>0$ that
\begin{equation*}
p_\Omega(t,x,y)\sim \frac{\log|x|\log|y| }{t(\log(1+\sqrt t)+\log|x|)(\log(1+\sqrt t)+\log|y|)}e^{-\frac{|x-y|^2}{4t}}.
\end{equation*}

Note that deriving the sharp heat kernel estimates typically relies on
the local comparison principle from \cite{FGS86}, the two-sided estimates for Green's function
from \cite{gw82,zhao86}
or the fact that the first eigenfunction $\phi$ is comparable to the distance function
on a bounded $C^{1,1}$ domain shown in \cite{davies87}.
While the local comparison theorem (see \cite{FGS86})
remains applicable in the planar case,
the validity of the other two properties remains unclear.

In what follows, we will primarily focus on the planar case.
However, some of our results are also valid in higher dimensions ($n \geq 3$),
and these cases will be explicitly pointed out.

To obtain the sharp bound for the heat kernel in two dimensional space,
it is reasonable to take advantage of results concerning non-parabolic manifolds.
Following the argument in \cite{gsc02}, we convert the parabolic manifold $(\mathbb{R}^2, dx)$
into a non-parabolic manifold $(\mathbb{R}^2, \phi^2 dx)$,
where $\phi$ is a harmonic function to be defined later.

For any positive smooth function $h$ on $\mathbb{R}^2$, the Laplace operator $\Delta^h$ of
$(\mathbb{R}^2, h^2 dx)$ is given by
$$
 \Delta^h f = h^{-2} \text{div}(h^2 \nabla f).
$$
Let
$p^h$ denote the heat kernel associated with $\Delta^h$,
and let $p_U^h$ be the Dirichlet heat kernel in $U$ associated with $\Delta^h$, where $U \subset \mathbb{R}^2$ is an
open set.

There is a tight connection between the heat kernels $p_U$ and $p_U^h$, known as Doob's transform.

\begin{prop}
Let $h$ be a positive function on $\mathbb{R}^2$.
Suppose that $\Delta u=0$ in an open set $U\subset \mathbb{R}^2$.
Then the heat kernel $p_U$ and $p^{h}_U$
are related by
\begin{align*}
p_U(t,x,y)=h(x)h(y)p_U^{h}(t,x,y), \quad \forall \, x,y\in U, \, t>0.
\end{align*}
\end{prop}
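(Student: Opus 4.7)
The plan is to derive Doob's transform from an algebraic intertwining between $\Delta$ and $\Delta^h$ and then identify the two heat kernels by testing against an arbitrary smooth compactly supported function. (I read the hypothesis ``$\Delta u=0$ in $U$'' as $\Delta h=0$ in $U$, since this is what makes the conclusion meaningful.)

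First I would unfold the weighted Laplacian. Using the product rule in the definition $\Delta^h f = h^{-2}\mathrm{div}(h^2 \nabla f)$ gives
$$\Delta^h f = \Delta f + 2\,h^{-1}\nabla h \cdot \nabla f.$$
Combined with the standard identity $\Delta(hf)=h\Delta f+2\nabla h\cdot\nabla f+f\Delta h$, the harmonicity of $h$ yields the intertwining relation
$$\Delta(hf) = h\,\Delta^h f \qquad \text{in } U.$$
This is the algebraic heart of the argument.

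Next I would transfer this intertwining to the level of Cauchy problems. Fix $\varphi\in C_c^\infty(U)$ and set $u(t,x):=\int_U p_U(t,x,y)\varphi(y)\,dy$, the usual Dirichlet heat solution with initial datum $\varphi$. Since $h$ is positive and continuous (hence bounded below on compact subsets of $U$ and near $\partial U$), $v:=u/h$ is smooth, vanishes on $\partial U$, and satisfies $v(0,\cdot)=\varphi/h\in C_c^\infty(U)$. The intertwining then gives
$$h\,\partial_t v = \partial_t u = \Delta u = \Delta(hv) = h\,\Delta^h v,$$
so $v$ solves the Cauchy--Dirichlet problem for $\Delta^h$ on $U$. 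By the semigroup representation of $p_U^h$ with respect to the weighted measure $h^2\,dx$,
$$v(t,x) = \int_U p_U^h(t,x,y)\,\frac{\varphi(y)}{h(y)}\,h(y)^2\,dy = \int_U h(y)\,p_U^h(t,x,y)\,\varphi(y)\,dy,$$
so multiplying by $h(x)$ and comparing with the defining formula for $u$ yields
$$\int_U p_U(t,x,y)\,\varphi(y)\,dy = \int_U h(x)h(y)\,p_U^h(t,x,y)\,\varphi(y)\,dy$$
for every $\varphi\in C_c^\infty(U)$, from which the stated kernel identity follows by continuity in $y$.

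The main care point is verifying that $v=u/h$ is indeed the unique solution of the weighted Cauchy--Dirichlet problem, so that the semigroup representation applies unambiguously; this rests on the positivity of $h$ (so that division is harmless and zero boundary values are preserved) together with the standard parabolic uniqueness statement for $\Delta^h$ on the weighted manifold $(U,h^2 dx)$. Everything else is formal manipulation once the intertwining identity is in hand.
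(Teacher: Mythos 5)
Your argument is correct, and the typo in the statement (``$\Delta u=0$'') should indeed read ``$\Delta h=0$''; the intertwining identity $\Delta(hf)=h\,\Delta^h f$ is exactly the algebraic content of Doob's transform. The paper itself just cites \cite[Proposition~4.2]{gsc02}, and the standard proof there is a functional-analytic one: the map $f\mapsto hf$ is a unitary isomorphism from $L^2(U,h^2\,dx)$ onto $L^2(U,dx)$ which intertwines the Dirichlet forms $\int_U |\nabla f|^2\,h^2\,dx$ and $\int_U |\nabla g|^2\,dx$ (this is where harmonicity of $h$ is used, via an integration by parts to kill the cross and $|\nabla h|^2$ terms), hence intertwines the Friedrichs extensions, hence the semigroups, hence the kernels. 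Your route is the PDE analogue at the level of the Cauchy--Dirichlet problem; it is a genuine alternative, but the place where the functional-analytic argument is cleaner is precisely the point you flag yourself: you need to know that $v=u/h$ coincides with the semigroup solution, i.e.\ a uniqueness statement for the weighted parabolic problem. For general open $U$ such uniqueness requires a growth condition (Tychonoff-type hypotheses), and the paper's definition of $p_U$ is actually as the \emph{minimal positive fundamental solution}, not a priori as the $L^2$-semigroup kernel, so the two notions also need to be reconciled.

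A slight repackaging of your argument avoids the uniqueness issue entirely and matches the paper's definition directly: the intertwining $\Delta(hf)=h\,\Delta^h f$ shows that $q(t,x,y):=p(t,x,y)/(h(x)h(y))$ is a positive fundamental solution of $\partial_t v=\Delta^h v$ (with respect to $h^2\,dx$) whenever $p$ is a positive fundamental solution of $\partial_t u=\Delta u$, and this correspondence $p\leftrightarrow q$ is a bijection between the two families of positive fundamental solutions that preserves the pointwise order. Hence the minimal one maps to the minimal one, which is the claim. This dispenses with both the choice of test function $\varphi$ and the Cauchy-problem uniqueness, at the cost of invoking the minimality characterization of the Dirichlet heat kernel rather than the semigroup representation.

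One further small point: you invoke continuity of $h$ near $\partial U$ to preserve the zero boundary condition. The statement only assumes $h>0$ on $\mathbb{R}^2$; in the application $h=\phi\ge 1$, so this is harmless, but as stated one should either assume $h$ locally bounded above and below, or sidestep the boundary discussion by the minimality argument above.
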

\begin{proof}
  For the proof, we refer to \cite[Proposition 4.2]{gsc02}.
\end{proof}

In fact, there exists a specific harmonic function $\phi$ such that $(\mathbb{R}^2, d\mu)$ becomes a non-parabolic manifold, where $d\mu = \phi^2 dx$.

\begin{prop} \label{prop:harmonic-es}
The following statements are valid.
\begin{itemize}
\item [(i)] Let $U:=\mathbb{R}^2 \setminus \overline{V}$, where $V \subset \mathbb{R}^2$ is a bounded open set.
    Then there exists a positive smooth function $\phi$ on $\mathbb{R}^2$
    that is harmonic in $U$ and admits the estimate
\[
\phi(x) \sim  \log(e + \rho(x)), \quad \forall \, x \in \mathbb{R}^2.
\]
    \item [(ii)] The weighted manifold $(\mathbb{R}^2, d\mu)$ with
    $d\mu = \phi^2 dx$ is non-parabolic and the heat kernel $p^{\phi}$ satisfies
\begin{equation*}
p^{\phi}(t,x,y)\sim \frac{1}{\sqrt{\mu(x,\sqrt t)\mu(y,\sqrt t)}}e^{-\frac{|x-y|^2}{ct}},
\quad \forall \, x,y \in \mathbb{R}^2, \, t >0.
\end{equation*}
Here, and thereafter, $\mu(x,\sqrt t):=\mu(B(x,\sqrt t))=\int_{B(x,\sqrt t)}\phi^2\,dy$.
Moreover, the measure $\mu$ is a doubling measure on $\mathbb{R}^2$, and satisfies
\[
\mu(B(x,r)) \sim r^2 [\log(e + r) + \log(e + \rho(x))]^2,\quad \forall \, x \in \mathbb{R}^2, \, r>0.
\]
\end{itemize}
\end{prop}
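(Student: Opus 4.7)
I would handle the two parts separately. For part (i), the natural candidate for $\phi$ is the (suitably normalized) solution of an exterior Dirichlet problem with prescribed logarithmic growth at infinity: a harmonic function $\psi$ on $U = \mathbb{R}^2 \setminus \overline V$ with $\psi|_{\partial V} = 1$ and $\psi(x) = \alpha\log|x| + O(|x|^{-1})$ as $|x|\to\infty$ for some $\alpha > 0$ determined by the logarithmic capacity of $V$. Existence and interior smoothness are classical, either via Perron's method with suitable barriers or, after a conformal reduction to the exterior of a disk, essentially explicitly. The maximum principle forces $\psi \ge 1$ on $U$, and the prescribed asymptotics give $\psi(x) \sim \log(e+|x|)$ for $|x|$ large. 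One then extends $\psi$ smoothly and positively across $\overline V$ (replacing $V$ by a slightly larger smooth superset if necessary, which does not affect the conclusion). The estimate $\phi(x) \sim \log(e+\rho(x))$ then follows in two regimes: for $\rho(x)$ bounded, both sides are comparable to $1$; for $\rho(x)$ large, $\rho(x) \sim |x|$ and $\phi(x) \sim \log|x|$.

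For part (ii), the volume estimate is obtained by direct integration using the pointwise bound from (i). I would split $B(x,r)$ into two regimes: if $r \le \rho(x)/2$ then $\rho(y) \sim \rho(x)$ throughout the ball by Lipschitzness of $\rho$, so $\phi^2 \sim \log^2(e+\rho(x))$ pointwise and the claim follows at once; if $r > \rho(x)/2$, the upper bound uses $\rho(y) \le |y-x| + \rho(x) \lesssim r$ (hence $\phi(y) \lesssim \log(e+r)$), while the lower bound comes from exhibiting a sub-ball of radius $\sim r$ inside $B(x,r)$ at distance $\gtrsim r$ from $\overline V$, on which $\phi \gtrsim \log(e+r)$. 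Doubling of $\mu$ is immediate from the resulting formula, and the integral test $\int_1^\infty r/\mu(B(x_0,r))\,dr < \infty$, which holds since $\mu(B(x_0,r)) \sim r^2\log^2(e+r)$ for large $r$, yields non-parabolicity.

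For the two-sided Gaussian estimate for $p^\phi$ I would invoke the Grigor'yan--Saloff-Coste characterization: on a complete weighted Riemannian manifold, volume doubling together with a scale-invariant $L^2$ Poincar\'e inequality is equivalent to two-sided Gaussian heat kernel bounds of the form stated. Doubling is already in hand, so the remaining task is to establish the Poincar\'e inequality on $(\mathbb{R}^2, \phi^2\, dx)$ uniformly across all balls.

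The main obstacle is precisely this Poincar\'e inequality, particularly for balls that cross $\partial V$, where $\phi \sim 1$ on part of the ball but grows like $\log(e+\rho)$ on the rest. The plan is to show that $\phi^2$ is a Muckenhoupt $A_2$ weight on $\mathbb{R}^2$: the Lipschitz character of $\rho$ and the slow growth of $\log(e+\cdot)$ allow one to bound the $A_2$ quantity $\bigl(|B|^{-1}\int_B \phi^2\bigr)\bigl(|B|^{-1}\int_B \phi^{-2}\bigr)$ by an absolute constant on any ball $B$, by splitting cases as in the volume computation. Standard weighted Poincar\'e theory for $A_2$ weights then supplies the required inequality, closing the argument.
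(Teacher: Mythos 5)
The paper gives no proof of this proposition beyond a citation to \cite[Lemmas 4.7 and 4.8]{gsc02}; the explicit construction of $\phi$ via single-layer potentials appears only in the remark that follows, and only for $\cdini$ obstacles. Your proposal is therefore an independent argument, and it largely succeeds. The volume estimate, doubling, and the integral test for non-parabolicity are essentially as in \cite{gsc02}; the substantive difference is your route to the scale-invariant $L^2$ Poincar\'e inequality on $(\mathbb{R}^2,\phi^2\,dx)$. Where \cite{gsc02} exploits the harmonicity of $\phi$ directly, you propose showing $\phi^2\in A_2(\mathbb{R}^2)$ and invoking Fabes--Kenig--Serapioni. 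This does work: for $r\lesssim\rho(x_0)$ the weight is essentially constant on $B(x_0,r)$, and for $r\gtrsim\rho(x_0)$ a dyadic decomposition in $\rho$ shows that $\frac{1}{|B|}\int_B\phi^{-2}\,dx\lesssim\log^{-2}(e+r)$, which matches the reciprocal of your volume estimate, so the $A_2$ constant is uniformly bounded. The $A_2$ route is attractive because it is modular and reduces matters to a single computable quantity.

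One genuine wrinkle lies in part (i). Replacing $V$ by a smooth superset $V'\supset\overline V$ to obtain a globally smooth extension of $\psi$ shrinks the region of harmonicity to $\mathbb{R}^2\setminus\overline{V'}\subsetneq U$, so the resulting $\phi$ is not harmonic on all of $U$ as the statement demands; conversely, extending $\psi$ across $\partial V$ without enlarging $V$ cannot in general produce a $C^\infty$ function for merely bounded open $V$. This is arguably a defect of the statement as much as of your argument: \cite[Lemma 4.7]{gsc02} asserts harmonicity only outside \emph{some} compact, and the paper's own explicit $\phi$ in Remark 2.4, extended by the constant $1$ across $\partial\Omega$, is only Lipschitz---which is in fact all that the later Doob-transform arguments use (the paper explicitly says ``$\phi$ is a Lipschitz function'' in Step 1 of Theorem~\ref{heat-upper}). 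You should either weaken ``smooth'' to ``Lipschitz'' and glue with the constant $1$, or state plainly that your $\phi$ is harmonic only outside an enlarged compact; as written, the parenthetical ``which does not affect the conclusion'' glosses over this.
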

\begin{proof}
For the proof, we refer to \cite[Lemmas 4.7 and 4.8]{gsc02}.
\end{proof}

\begin{rem}\rm

In fact, the aforementioned method for converting a parabolic manifold into a non-parabolic one is applicable not only for $\mathbb{R}^2$,
but also for manifolds  satisfying the parabolic Harnack inequality and the relatively connected annuli condition. For further details,
see \cite[Section 4]{gsc02}. Moreover, the above statements rely solely on the assumption that $\phi$ is a harmonic function satisfying
certain growth conditions. In the special case of our paper, where
the open set $\Omega \subset \mathbb{R}^2$ is an exterior $\cdini$ domain, we can explicitly construct $\phi$ as follows.

Without loss of generality, we may assume that the spatial origin $0 \in \Omega^c$.
Define
$$u_0(x):=\frac{1}{2\pi\sigma(\partial\Omega)}\int_{\partial\Omega}\ln |y-x|\,d\sigma(y),$$
where $d\sigma$ denotes the surface measure on $\partial\Omega$.
Then it holds that $\Delta u_0=\frac{1}{\sigma(\partial\boz)}\delta_{\partial\boz}$
and we have $u_0\in \dot{W}^{1,p}(\rr^2)$ for any $p\in (2,\infty)$ but $u_0\notin \dot{W}^{1,2}(\rr^2)$.
Otherwise, since $\Delta u_0\in \dot{W}^{-1,2}(\rr^2)$ and $1\in \dot{W}^{1,2}(\rr^2)$,
it would follow the contradiction $1=\langle\Delta u_0,1\rangle=0$.

Next, let $u_1$ denote the
unique solution in $\dot{W}^{1,2}(\boz)\cap\dot{W}^{1,p}(\boz)$
to the boundary value problem
\begin{equation*}\label{e2.9a}
\lf\{\begin{array}{ll}
-\Delta u_1=0,\ \ &\text{in}\ \boz,\\
u_1=u_0,\  &\text{on}\  \partial\boz.
\end{array}\r.
\end{equation*}
See \cite[Theorem 2.7 \& Remark 2.8]{agg97} or \cite[Proposition 3.3]{JY24} for instance.

By the maximal principle, either $u_0-u_1<0$ or $u_1-u_0<0$ in $\Omega$.
Assume without loss of generality that $u_0-u_1<0$.
For the $\cdini$ domain $\Omega$, the function $u_0-u_1$
exhibits the asymptotic behavior
\begin{equation} \label{eq:u0u1}
(u_0-u_1)(x)=-c_0\ln|x|+O(|x|^{-1})
\end{equation}
as $|x|\to\fz$, for some $c_0>0$; see \cite{Ver84} and \cite[Remark 5.5]{sw23}.

Finally, define the auxiliary function
\begin{align}\label{auxiliary-function}
\phi(x)=
\begin{cases}
1+\frac{1}{c_0}(u_1-u_0)(x), &\,\forall\,x\in{\Omega},\\
1,&\, \forall \, x\in \rr^2\setminus\Omega.
\end{cases}
\end{align}

By \eqref{eq:u0u1}, we find that there exists a positive constant $M > 2 \diam(\Omega^c)$ such that,
for any $|x| \ge M$, it holds that $(u_1 - u_0)(x) \sim \log(e+ |x|)$.
Thus, for the case $|x| > M$, it holds that $\rho(x) \ge |x|-\diam(\Omega^c) \gtrsim |x|$ and
$\rho(x) \sim |x|$, which yields that
$\phi(x) \sim \log(e + \rho(x))$.
For the case $|x| < M$, it follows from
the facts $\phi(x) \ge 1$ and $\rho(x) \leq M$ that
$\phi(x) \gtrsim \log(e + \rho(x))$.
Besides, by the fact that $\phi$ is a continuous function,
we conclude that $\phi(x) \lesssim \log(e + \rho(x))$ for the case $|x| < M$. Therefore,
for any $x\in\rn$, $\phi(x) \sim \log(e+\rho(x))$.
\end{rem}

\subsection{Upper bound of the heat kernel} \hskip\parindent
Let us extend the heat kernels $p_\Omega(t,x,y)$  and $p_\Omega^\phi(t,x,y)$ to $\rr_+\times\rr^2\times\rr^2$ by
defining them to be zero for any $x \in \Omega^c$ or $y \in \Omega^c$.
We have the following upper bound for the heat kernel $p_\Omega^\phi(t,x,y)$.
\begin{thm}\label{heat-upper}
Let $\Omega$ be an exterior $\cdini$ domain in $\rr^2$.
There there exists a constant $C >0$ such that
\begin{equation*}
p^\phi_\Omega(t,x,y)\le \frac{C}{\sqrt{\mu(x,\sqrt t)\mu(y,\sqrt t)}}\left(\frac{\rho(x)}{\sqrt t\wedge 1}\wedge 1\right)
\left( \frac{\rho(y)}{\sqrt t\wedge 1}\wedge 1\right) e^{-\frac{|x-y|^2}{ct}}.
\end{equation*}
for all $x, y \in \mathbb{R}^2$, and any $t >0$.
\end{thm}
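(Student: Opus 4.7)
\emph{Proof proposal.} The plan is to combine the ambient Gaussian upper bound on the non-parabolic weighted manifold $(\mathbb{R}^2,d\mu)$ provided by Proposition \ref{prop:harmonic-es}(ii) with a parabolic boundary Lipschitz estimate on $\cdini$ domains, in order to extract the two boundary decay factors. The whole argument is local near $\partial\Omega$; far from the boundary both factors saturate at $1$ and there is nothing to prove beyond the free Gaussian bound.

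\textbf{Step 1 (ambient Gaussian).} Extending $p^\phi_\Omega$ by zero outside $\Omega$ and invoking the maximum principle/domain monotonicity, one has $p^\phi_\Omega(t,x,y)\le p^\phi(t,x,y)$, so Proposition \ref{prop:harmonic-es}(ii) yields
\[
p^\phi_\Omega(t,x,y)\le \frac{C}{\sqrt{\mu(x,\sqrt t)\mu(y,\sqrt t)}}\,e^{-|x-y|^2/(ct)},
\]
which is the target estimate with both boundary factors replaced by $1$.

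\textbf{Step 2 (one-sided boundary decay).} By the $d\mu$-symmetry $p^\phi_\Omega(t,x,y)=p^\phi_\Omega(t,y,x)$ together with the Chapman--Kolmogorov identity
\[
p^\phi_\Omega(t,x,y)=\int_\Omega p^\phi_\Omega(t/2,x,z)\,p^\phi_\Omega(t/2,z,y)\,d\mu(z),
\]
and the Gaussian reproduction formula, it suffices to prove the one-sided estimate
\[
p^\phi_\Omega(t,x,y)\lesssim \Bigl(\tfrac{\rho(x)}{\sqrt t\wedge 1}\wedge 1\Bigr)\cdot\frac{1}{\sqrt{\mu(x,\sqrt t)\mu(y,\sqrt t)}}\,e^{-|x-y|^2/(ct)}.
\]
If $\rho(x)\ge \sqrt t\wedge 1$ the factor equals $1$ and Step 1 suffices; otherwise set $r:=\sqrt t\wedge 1$ and pick $x_0\in\partial\Omega$ with $|x-x_0|=\rho(x)$. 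Fix $y\in\Omega$ and view $u(s,z):=p^\phi_\Omega(s,z,y)$ as a nonnegative solution of the uniformly parabolic equation
\[
\partial_s u \,=\, \Delta^\phi u \,=\, \Delta u+2\nabla\log\phi\cdot\nabla u
\]
on $(t/2,t)\times(\Omega\cap B(x_0,2r))$ that vanishes on the lateral boundary $(t/2,t)\times(\partial\Omega\cap B(x_0,2r))$. Since $\phi$ is smooth and strictly positive, $\Delta^\phi$ has smooth coefficients near $\partial\Omega$ with bounds independent of $x_0$ by compactness of $\partial\Omega$. The boundary Lipschitz estimate for parabolic equations on $\cdini$ domains (e.g.\ \cite{DEK18} or Lieberman-type estimates) then gives
\[
u(t,x)\lesssim \frac{\rho(x)}{r}\sup_{(s,z)\in(t-r^2/4,\,t)\times(\Omega\cap B(x_0,2r))}u(s,z).
\]
For $(s,z)$ in this parabolic cylinder, the doubling property of $\mu$ yields $\mu(z,\sqrt s)\sim\mu(x,\sqrt t)$, while elementary manipulation shows $e^{-|z-y|^2/(cs)}\lesssim e^{-|x-y|^2/(c't)}$ after slightly enlarging the Gaussian constant. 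Substituting the bound from Step 1 for $u(s,z)$ finishes the one-sided estimate and, combined with Step 2, completes the proof.

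\textbf{Main obstacle.} The crux is the \emph{linear} (Lipschitz) boundary decay of caloric functions. The $\cdini$ hypothesis on $\partial\Omega$ is precisely the threshold at which this linear decay is available; under mere $C^1$ regularity one would only obtain Hölder decay $\rho(x)^\alpha$ with some $\alpha<1$, which is too weak for the sharp bound of Theorem \ref{heat-upper}. The weighted nature of the operator $\Delta^\phi$ is harmless because $\phi$ is smooth and positive on a neighborhood of the compact boundary, so it contributes only a smooth first-order perturbation with controlled coefficients; the classical Schauder/boundary regularity theory for parabolic equations on $\cdini$ domains applies uniformly in $x_0\in\partial\Omega$.
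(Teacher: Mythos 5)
Your overall architecture is sound and in the same spirit as the paper's, but the key step is carried out by a genuinely different mechanism, and there is one citation/precision issue worth flagging.

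The paper also starts from the ambient Gaussian bound $p^\phi_\Omega\le p^\phi$ (your Step 1) and uses Chapman--Kolmogorov to combine two one-sided boundary decay factors (your Step 2 split). Where you differ is in how the one-sided $\rho(x)/(\sqrt t\wedge1)$ factor is extracted. You invoke a \emph{parabolic} boundary Lipschitz decay estimate for the caloric function $u(s,z)=p^\phi_\Omega(s,z,y)$ directly. The paper instead freezes time and treats $\Delta^\phi_\Omega p^\phi_\Omega(t,\cdot,y)=\partial_t p^\phi_\Omega(t,\cdot,y)$ as an \emph{elliptic} Poisson equation with source $\partial_t p^\phi_\Omega$, applies the elliptic $C^1$-up-to-the-boundary estimate of \cite[Theorem 1.3]{DEK18} together with Davies' bound $|\partial_t p^\phi_\Omega|\lesssim t^{-1}\mu(y,\sqrt t)^{-1}e^{-|x-y|^2/ct}$, and then integrates $\nabla_y p^\phi_\Omega$ along a geodesic from $y$ to the nearest boundary point to harvest the factor $\rho(y)$. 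What the paper's route buys you is that it relies only on a result already at hand for the \emph{elliptic} operator on $\cdini$ domains; your route would require a parabolic analogue.

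Two points to tighten. First, your reference is off: \cite{DEK18} is purely elliptic (indeed its title announces that), so it does not by itself furnish the parabolic boundary Lipschitz estimate you use. The needed parabolic boundary regularity on $C^{1,\mathrm{Dini}}$ domains is available in the literature (Lieberman and subsequent refinements), but you should cite a parabolic source or, alternatively, reproduce the paper's elliptic reduction via Davies' time-derivative bound, which sidesteps the parabolic theory entirely. Second, your claim that ``$\phi$ is smooth near $\partial\Omega$'' overstates the regularity: $\phi$ is harmonic (hence smooth) in $\Omega$, but up to $\partial\Omega$ it is only $C^1$ with Dini-continuous gradient, because that is all the $\cdini$ boundary provides; this is enough for the boundary estimate you want, but the coefficients of $\Delta^\phi=\Delta+2\nabla\log\phi\cdot\nabla$ near $\partial\Omega$ should be described as Dini-continuous rather than smooth. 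Neither issue is fatal to the strategy, but both must be addressed for the argument to stand on its own.
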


\begin{proof}
\textbf{Step 1.} Since $\phi$ is a Lipschitz function and $\Omega$ is an exterior $\cdini$ domain, by  \cite[Theorem 1.3]{DEK18},
any solution $u$ to the Poisson equation
$$\Delta^\phi_{\Omega} u=g,$$
with $g\in L^p(\Omega)$, where $p>n$, belongs to $C^1_{\mathrm{loc}}(\Omega)$ and satisfies that, for any $x_0 \in \Omega$,
\begin{equation}\label{poisson}
\|\nabla u\|_{L^\infty(B(x_0,r)\cap\Omega)}
\le \frac{C}{r}
\fint_{B(x_0,2r)\cap \Omega}|u|\,d\mu(x)
+Cr\left(\fint_{B(x_0,2r)\cap \Omega}|g|^{p}\,d\mu(x)\right)^{1/p};
\end{equation}
see the proof of \cite[Proposition 2.7]{DEK18}.

Since the heat kernel $p^\phi(t,x,y)$ satisfies the Gaussian upper bound
(see Proposition \ref{prop:harmonic-es}), the maximal principle implies that for any $x,y \in \mathbb{R}^2$ and $t >0$,
\begin{equation} \label{eq:rough-es}
p_\Omega^\phi(t,x,y)\le p^\phi(t,x,y)\le  \frac{C}{\mu(y,\sqrt t)} e^{-\frac{|x-y|^2}{ct}}.
\end{equation}
Moreover, it follows from \cite[Theorem 4]{davies97} that the time gradient of $p_\Omega^\phi$ satisfies that, for any $x, y \in \mathbb{R}^2$ and $t >0$,
\begin{equation*}
|\partial_t p^\phi_\Omega(t,x,y)|\le  \frac{C}{t\mu(y,\sqrt t)} e^{-\frac{|x-y|^2}{ct}}.
\end{equation*}

\textbf{Step 2.}  For  all $x, y \in \Omega$ and $t >0$, we apply \eqref{poisson} to the equation $\Delta^\phi_{\Omega} p_\Omega^\phi(t,\cdot,y)=\partial_t p_\Omega^\phi(t,\cdot,y)$ in the ball $B(x, r)$
with $r=\sqrt t \wedge 1$,
which gives that
\begin{align*}
|\nabla_x p_\Omega^\phi(t,x,y)|
    \leq
    \frac{C}{r} \fint_{B(x, 2r) \cap \Omega} p_\Omega^\phi(t, z,y) \, d\mu(z)
      + C r\left(\fint_{B(x,2r)\cap \Omega} |\partial_t p_\Omega(t, z,y)|^p \, d\mu(z) \right)^{1/p}.
\end{align*}
For the case $|x-y| \ge 4r$, one has
\[
    |y-z| \ge |x-y| - |x-z| \ge |x-y| - |x-y| /2 = |x-y|/2,
\]
which further implies that
\[
    |\nabla_x p_\Omega^\phi(t,x,y)| \leq \frac{C}{\sqrt t \wedge 1} \frac{1}{\mu(y,\sqrt t)} e^{-\frac{|x-y|^2}{ct}}.
\]
For the case $|x-y| \leq 4 r$, we have
\[
    e^{-\frac{|x-y|^2}{ct}} \sim 1,
\]
which also implies that
\begin{equation} \label{eq:gradient-es}
    |\nabla_x p_\Omega^\phi(t,x,y)| \leq \frac{C}{\sqrt t \wedge 1} \frac{1}{\mu(y,\sqrt t)} e^{-\frac{|x-y|^2}{ct}}.
\end{equation}
By symmetry, the above inequality also holds with $x$ replaced by $y$.

\textbf{Step 3.}
In case of $\rho(y)<\sqrt t \wedge 1$, since $\partial \Omega$ is compact and $C^{1}$-regular, we can find $y_0\in\partial\Omega$ such that
$$|y-y_0|=\rho(y).$$
Let $\ell_{yy_0}$ be the geodesic connecting $y$ to $y_0$.
Using the fact that $p^\phi_\Omega(t,x,y_0)=0$, \eqref{eq:gradient-es} and the gradient theorem, we see that
\begin{align*}
  p_\Omega^\phi(t,x,y)=p_\Omega^\phi(t,x,y)-p_\Omega^\phi(t,x,y_0)
  \le \int_{\ell_{yy_0}} \frac{C}{\sqrt t \wedge 1}\frac{1}{\mu(x,\sqrt t)}
e^{-\frac{|x-z|^2}{ct}}\,d\ell(z).
\end{align*}
If $|x-y| \leq 2 \rho(y) \leq 2 \sqrt t$, we have
\[
    e^{-\frac{|x-y|^2}{ct}}  \sim 1,
\]
which implies that
\[
    p_\Omega^\phi(t,x,y)
    \le C \frac{\rho(y)}{\sqrt t \wedge 1}\frac{1}{\mu(y,\sqrt t)} e^{-\frac{|x-y|^2}{ct}}.
\]
If $|x-y| \ge 2 \rho(y) \ge 2|z-y|$, we have
\[
    |x-z| \ge |x-y| - |y-z| \ge  |x-y|/2,
\]
which also implies that
\[
    p_\Omega^\phi(t,x,y)
    \le C \frac{\rho(y)}{\sqrt t \wedge 1}\frac{1}{\mu(y,\sqrt t)} e^{-\frac{|x-y|^2}{ct}}.
\]

By symmetry of the heat kernel, if $\rho(x)<\sqrt t \wedge 1$, we also have
\begin{equation*}
  p_\Omega^\phi(t,x,y)\le C \frac{\rho(x)}{\sqrt t \wedge 1}\frac{1}{\mu(y,\sqrt t)} e^{-\frac{|x-y|^2}{ct}}.
\end{equation*}
Finally, if $\rho(y)<\sqrt t \wedge 1$ and $\rho(x)<\sqrt t \wedge 1$, it holds that
\begin{align*}
p^\phi_\Omega(t,x,y)
& = \int_{\Omega} p_\Omega(t/2,x,z) p_\Omega(t/2,z,y)\,dz\nonumber\\
& \le   C \frac{\rho(x)}{\sqrt t \wedge 1}\frac{1}{\mu(y,\sqrt t)} \frac{\rho(y)}{\sqrt t \wedge 1}\frac{1}{t} \int_{\Omega}
e^{-\frac{|z-y|^2+|y-z|^2}{ct}}
\,dz \nonumber \\
&\le  C \left(\frac{\rho(x)}{\sqrt t \wedge 1}\frac{\rho(y)}{\sqrt t \wedge 1} \right) \frac{1}{\mu(y,\sqrt t)} e^{-\frac{|x-y|^2}{ct}},
\end{align*}
which together with \eqref{eq:rough-es} gives that for any $x,y\in \mathbb{R}^2$ and $t > 0$,
$$p^\phi_\Omega(t,x,y)\le  \frac{C}{\sqrt{\mu(x,\sqrt t)\mu(y,\sqrt t)}}\left(\frac{\rho(x)}{\sqrt t\wedge 1}\wedge 1\right)\left( \frac{\rho(y)}{\sqrt t\wedge 1}\wedge 1\right) e^{-\frac{|x-y|^2}{ct}}.$$
This completes the proof.
\end{proof}

\begin{rem}\label{rem-heat-n3}\rm
The same proof shows that on an exterior $\cdini$ domain $\Omega\subset\rn$, where $n\ge 3$,
there exists a constant $C >0 $ such that
\begin{equation*}
p_\Omega(t,x,y)\le \frac{C}{t^{n/2}}\left(\frac{\rho(x)}{\sqrt t\wedge 1}\wedge 1\right)\left( \frac{\rho(y)}{\sqrt t\wedge 1}\wedge 1\right) %
e^{-\frac{|x-y|^2}{ct}},
\end{equation*}
for all $x, y \in \mathbb{R}^n$ and any $t >0$.
\end{rem}

\subsection{Lower bound of the heat kernel} \hskip\parindent
To derive the sharp heat kernel estimates, one typically relies on the local comparison principle for parabolic equations from \cite{FGS86}, the two-sided bounds for Green's function from \cite{gw82,zhao86}
or the fact that the first eigenfunction $\phi$ is comparable to the distance function on a bounded $C^{1,1}$ domain from \cite{davies87}. While the local comparison theorem (\cite{FGS86})
works for the planar case, the other two properties remains unclear in this setting.
Deriving two-sided bounds for the Green's function appears to be rather difficult.
However, we provide a proof for the two-sided bound of the first eigenfunction as follows.

\subsubsection{Lower bound of the first eigenfunction}\hskip\parindent
We need sharp estimate of the first eigenfunction for planar domains. Recall that the result for domains
on higher-dimensional Euclidean space ($n\ge 3$) was proved by Davies \cite{davies87}.
\begin{thm}\label{lower-eigenfunction}
Let $\Omega\subset\rr^2$ be a bounded $C^{1,1}$ domain. Let $\phi_\Omega$ be the first eigenfunction of $\Delta^\phi_{\Omega}$.
Then there exists a constant $C=C(\Omega)>0$ depending on $\Omega$ such that, for any $x\in\Omega$,
$$\frac 1C\rho(x)\le \phi_\Omega(x)\le C\rho(x).$$
 \end{thm}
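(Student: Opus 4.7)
The plan is to perform a ground-state (Doob-type) transformation using the harmonicity of $\phi$ to convert the weighted eigenvalue problem for $-\Delta^\phi_\Omega$ into the classical Dirichlet eigenvalue problem for $-\Delta$ on $\Omega$, and then to apply the well known comparison between the principal Dirichlet eigenfunction and the distance function on bounded $C^{1,1}$ domains.

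In the intended application, the bounded $C^{1,1}$ domain $\Omega$ lies inside the exterior region on which the function $\phi$ from \eqref{auxiliary-function} is harmonic, so that $\Delta \phi = 0$ on $\Omega$. I would then set $u := \phi \, \phi_\Omega$ and verify by a short computation using $\Delta\phi=0$ that
\begin{align*}
\Delta u = \phi_\Omega \Delta \phi + 2 \nabla \phi \cdot \nabla \phi_\Omega + \phi \Delta \phi_\Omega
= \phi \bigl( \Delta \phi_\Omega + 2 \nabla \log \phi \cdot \nabla \phi_\Omega \bigr)
= \phi \, \Delta^\phi \phi_\Omega
= -\lambda_1 u,
\end{align*}
where $\lambda_1 > 0$ denotes the principal eigenvalue of $-\Delta^\phi_\Omega$. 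Since $\phi > 0$ on $\bar\Omega$, the function $u$ vanishes on $\partial\Omega$ and is strictly positive in $\Omega$, so by uniqueness of the positive Dirichlet eigenfunction $u$ is (a scalar multiple of) the principal Dirichlet eigenfunction of $-\Delta$ on $\Omega$.

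Next I would invoke the classical result of \cite{davies87} (or, equivalently, $C^{1,\alpha}$ Schauder regularity up to the boundary for $u$ combined with the Hopf boundary point lemma and the strong maximum principle) to conclude $u(x) \sim \rho(x)$ on the bounded $C^{1,1}$ domain $\Omega$. Finally, since $\phi$ is continuous, strictly positive and bounded above on the compact set $\bar\Omega$, one has $\phi \sim 1$ on $\bar\Omega$, so dividing yields $\phi_\Omega(x) = u(x)/\phi(x) \sim \rho(x)$, as desired.

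The main subtlety is arranging the configuration in which $\phi$ is harmonic on all of $\Omega$: without this the lifting identity $\phi\,\Delta^\phi v = \Delta(\phi v)$ picks up an extra term $v\,\Delta\phi$ and the reduction to $-\Delta$ breaks down. Once $\Omega$ is taken to be a bounded $C^{1,1}$ subdomain of the original exterior domain---which is precisely the configuration needed in the subsequent lower bound for the heat kernel---the rest of the argument is essentially a repackaging of classical facts.
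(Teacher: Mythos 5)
Your proposal is correct and takes a genuinely different, arguably cleaner, route than the paper. The key difference: you exploit the harmonicity of $\phi$ on $\Omega$ to perform a Doob (ground-state) transform, reducing the weighted eigenvalue problem for $-\Delta^\phi_\Omega$ to the classical Dirichlet eigenvalue problem for $-\Delta$, and then invoke the standard two-sided comparison $u\sim\rho$ for the principal eigenfunction on a bounded $C^{1,1}$ domain. The paper, by contrast, does \emph{not} use the harmonicity of $\phi$ anywhere; it treats $\Delta^\phi_\Omega$ as a general uniformly elliptic divergence-form operator, gets the upper bound from the interior Schauder estimate \eqref{poisson} plus Sobolev embedding, and for the lower bound lifts $\phi_\Omega$ to the harmonic function $v(x,t)=\phi_\Omega(x)e^{\sqrt\lambda t}$ of $\partial_t^2+\Delta^\phi$ on a bounded $C^{1,1}$ domain in $\mathbb{R}^3$, and then compares $v$ against the $3$D Green function of that operator using \cite{gw82,zhao86} and the maximum principle. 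The whole point of the $3$D lifting in the paper is to get around the fact that the Gr\"uter--Widman/Zhao two-sided Green function estimates (and Davies' argument built on them) are unavailable in $n=2$ because of the logarithmic singularity. Your reduction sidesteps this entirely, since the Hopf boundary-point lemma plus $C^{1,\alpha}$ Schauder regularity up to the boundary gives $u\sim\rho$ directly in any dimension. Your approach buys simplicity and conceptual clarity; the paper's buys generality (it would work for non-harmonic weights and does not require $\Omega$ to sit inside the region where $\phi$ is harmonic).

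Two small points to tighten. First, the paper itself notes that Davies' result \cite{davies87} is proved for $n\ge 3$, so for the $2$D statement you should lean on the alternative you parenthetically offer (boundary Schauder regularity, Hopf's lemma at each boundary point, compactness of $\partial\Omega$ to get a uniform lower bound on the inward normal derivative, and a crude bound away from the boundary) rather than on Davies directly. Second, you should make explicit that the bounded $C^{1,1}$ domain $\Omega$ in the theorem is a subdomain of the exterior region on which $\phi$ was constructed to be harmonic (as in the application $\tilde\Omega=B(x_0,4R)\cap\Omega_{\mathrm{ext}}$ in the proof of Theorem \ref{lower-bound-heat}); you correctly flag this as the ``main subtlety,'' and without it the identity $\phi\,\Delta^\phi v=\Delta(\phi v)$ picks up the extra term $v\,\Delta\phi$ and the reduction fails. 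With these two clarifications your argument is complete and valid.
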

 \begin{proof}
We normalise $\phi_\Omega$ such that $\|\phi_\Omega\|_{L^2(\Omega)}=1$. We first estimate the upper bound of $\phi_\Omega$.
Notice that $\|\phi_\Omega\|_{W^{1,2}(\Omega)} \leq C \|\phi_{\Omega}\|_{L^2(\Omega)}=C$.
By this and the Sobolev embedding theorem (see, for instance, \cite{Mo70}), we conclude that, for any $q\in(1,\infty)$,
 \begin{equation} \label{eq:emb}
 \|\phi_{\Omega}\|_{L^q(\Omega)}
 \leq C(\Omega) \|\phi_{\Omega}\|_{W^{1,2}(\Omega)}
\leq C(\Omega).
 \end{equation}
Moreover, since $\phi_\Omega \in C^{1,1}(\overline{\Omega})$, combining the estimates \eqref{poisson} and \eqref{eq:emb} further yields that,
for any $x\in\Omega$,
$$\phi_\Omega(x)\le C\|\nabla \phi_\Omega \|_{L^\infty(\Omega)}\rho(x)\le C(\Omega)\rho(x).$$

For the lower bound, we employ the standard technique of lifting $\phi_\Omega$ to a harmonic function $v$ on $\Omega\times\rr$,
defined as $v(x,t):=\phi_\Omega(x) e^{\sqrt\lambda t}$, where $\lambda$ is the first eigenvalue of $\Omega$, i.e.,
 $$-\Delta^\phi_{\Omega}\phi_\Omega=\lambda\phi_\Omega.$$

Let $R_0:=\mathrm{diam}(\Omega)$. Since $\Omega$ is a $C^{1,1}$ domain, we can choose a $C^{1,1}$ domain $\tilde \Omega$ in $\rr^3$ such that
$$\Omega\times (-2R_0,2R_0)\subset\tilde\Omega \subset \Omega\times (-4R_0,4R_0).$$
Let $\Gamma(\tilde x,\tilde y)$ be the Green function of the elliptic operator
$$\Delta_{\tilde\Omega}:= \frac{\partial^2}{\partial t^2} + \Delta^{\phi}$$
with Dirichlet boundary condition on $\tilde \Omega$. 
It follows from \cite{gw82,zhao86} (see also \cite[Theorem A]{zha03}) that
$$\frac 1C \left(\frac{\rho_{\tilde\Omega}(\tilde x)\rho_{\tilde\Omega}(\tilde y)}{|\tilde x-\tilde y|^2}\wedge 1\right)\frac{1}{|\tilde x-\tilde y|}\le\Gamma(\tilde x,\tilde y)\le C\left(\frac{\rho_{\tilde\Omega}(\tilde x)\rho_{\tilde\Omega}(\tilde y)}{|\tilde x-\tilde y|^2}\wedge 1\right)\frac{1}{|\tilde x-\tilde y|}.$$
Let $x_0\in\Omega$ be the maximal point of $\phi_\Omega$. Noting that $\|\phi_\Omega\|_{L^2(\Omega)}=1$, we have
$$\phi_\Omega(x_0)\ge |\Omega|^{-1/2}.$$
Choose a positive constant $0<\delta<R_0/2$ such that
$B(x_0,2\delta)\subset\Omega$
and
\begin{equation} \label{eq:in-ball-phi}
\inf_{x \in \bar{B}(x_0,\delta)} \phi_\Omega(x)\ge \phi_\Omega(x_0)-\delta \|\nabla \phi_\Omega\|_{L^\infty(\Omega)}>\frac 12 \phi_\Omega(x_0)>\frac 12 |\Omega|^{-1/2}.
\end{equation}
Besides, we have
$$\Gamma(\tilde x_0,\tilde y)
\le C\left(\frac{\rho_{\tilde\Omega}(\tilde x_0)\rho_{\tilde\Omega}(\tilde y)}{|\tilde x_0-\tilde y|^2}\wedge 1\right)\frac{1}{|\tilde x_0-\tilde y|}
\le C(\Omega,\delta),$$
for all $\tilde y\in \tilde\Omega \setminus B(\tilde x_0,\delta)$,
where $\tilde x_0:=(x_0,0)$.

Combining the above two estimates, one obtains
\begin{align*}
\inf_{\tilde x \in \bar{B}(\tilde x_0,\delta)}v(\tilde x)
& = \inf_{(x,t) \in \bar{B}(\tilde x_0,\delta)}
\phi_\Omega(x) e^{\sqrt\lambda t} \ge \frac 12 |\Omega|^{-1/2} e^{-\sqrt\lambda \delta} \\
&\ge C(\Omega,\delta)   \frac 1{C(\Omega,\delta)} |\Omega|^{-1/2} e^{-\sqrt\lambda \delta}\\
&\ge \frac 1{C(\Omega,\delta)} |\Omega|^{-1/2} e^{-\sqrt\lambda \delta}\Gamma(\tilde x_0,\tilde y),
\end{align*}
for any $\tilde y\in \tilde\Omega \setminus B(\tilde{x}_0,\delta)$. This together with the fact that
$$\Gamma(\tilde x_0,\tilde y)=0\le C(\Omega,\delta) |\Omega|^{1/2} e^{\sqrt\lambda \delta} v(\tilde y),
\quad \forall\,\tilde y\in \partial\tilde\Omega,$$
yields that
\[
\Gamma(\tilde x_0,\tilde y) \le C(\Omega,\delta) |\Omega|^{1/2} e^{\sqrt\lambda \delta} v(\tilde y),
  \quad \forall\,\tilde y\in \partial B(\tilde{x}_0,\delta) \cup \partial\tilde\Omega.
\]
Furthermore, by the maximal principle, we have
$$\Gamma(\tilde x_0,\tilde y)\le  C(\Omega,\delta) |\Omega|^{1/2} e^{\sqrt\lambda \delta} v(\tilde y),
 \quad \forall\, \tilde{y} \in \tilde\Omega \setminus B(\tilde x_0,\delta).$$
Recall that $R_0 = \diam(\Omega)$ and
$$\Omega\times (-2R_0,2R_0)\subset\tilde\Omega \subset \Omega\times (-4R_0,4R_0),$$
it holds that, for any $x \in \Omega$ and $\tilde x=(x,0)$,
$$\rho_{\tilde \Omega}(\tilde x)=\rho(x),$$
and hence for any $x \in \Omega \setminus B(x_0, \delta)$,
\begin{align*}
\phi_\Omega(x)
=v(\tilde x)
\ge C(\Omega,\delta)\Gamma(\tilde x_0,\tilde x)
& \ge C(\Omega,\delta) \left(\frac{\rho_{\tilde\Omega}(\tilde x_0)\rho_{\tilde\Omega}(\tilde x)}{|\tilde x_0-\tilde x|^2}\wedge 1\right)\frac{1}{|\tilde x_0-\tilde x|}  \\
& \ge C(\Omega,\delta) \left(\frac{\delta \rho_{\tilde\Omega}(\tilde x)}{R^2}\wedge \rho_{\tilde\Omega}(\tilde x)\right)\frac{1}{R} \\
& \ge C(\Omega,\delta) \rho_{\tilde\Omega}(\tilde x)
=C(\Omega,\delta) \rho(x),
\end{align*}
where we used the fact that $\rho_{\tilde\Omega}(\tilde x) \leq 2 R \leq C(\Omega)$. This together with \eqref{eq:in-ball-phi} yields that, for any $x \in \Omega$,
\[
\phi_{\Omega}(x) \ge C(\Omega,\delta) \rho(x),
\]
which completes the proof.
 \end{proof}

\subsubsection{Lower bound of the heat kernel} \hskip\parindent
With Theorem \ref{lower-eigenfunction} at hand, we can follow Zhang's approach \cite{zha02,zha03} to derive the lower bound for the heat kernel around the boundary.
By combining this boundary estimate with the work of Grigor'yan and Saloff-Coste  \cite{gsc02}, we establish the global lower bound.

We need the comparison result for parabolic equations from \cite[Theorem 1.6]{FGS86}.

\begin{thm}\label{comparison}
Let $\Omega \subset \mathbb{R}^n$ be a bounded Lipschitz domain, $x_0\in\partial \Omega$ and $t>0$, where $ n \ge 2$.
Assume that $\psi$ is a Lipschitz function satisfying $0<c\le \psi\le C<\infty$
, where $c$ and $C$ are positive constants. Suppose that $u,v$ are two positive solutions to the heat equation
$$\partial_t u=\Delta^\psi_{\Omega}u$$
on $\Omega \times (t/4,\infty)$, with both $u$ and $v$ vanishing continuously on
$B(x_0,16\sqrt{a_0t}) \cap\partial \Omega$, where $a_0 = 3/8$. Then there exists constants $r_0>0$ and $C>0$ depending
only on $\Omega$ and $\psi$, such that, for any $t\leq r_0$ and $x\in B(x_0,\sqrt{a_0t})$,
$$\frac{u(x,t)}{v(x,t)}\le C\frac{u(x',2t)}{v(x',t/2)},$$
where $x'\in\Omega$ satisfies $\mathrm{dist}(x',B(x_0,8\sqrt{a_0t})\cap\partial \Omega)=8\sqrt{a_0t}$.
\end{thm}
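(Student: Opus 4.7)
The plan is to prove Theorem \ref{comparison} as a parabolic boundary Harnack principle for positive solutions of $\partial_t w = \Delta^\psi w$ vanishing on a common boundary piece; the natural route is to establish matching two-sided Hopf-type bounds on each of $u$ and $v$ separately and then divide. After straightening $\partial\Omega$ near $x_0$ by a bilipschitz chart (available since $\partial\Omega$ is Lipschitz) and noting that $\Delta^\psi = \psi^{-2}\operatorname{div}(\psi^2\nabla\cdot)$ is uniformly elliptic in divergence form with bounded measurable coefficients (using $0<c\le\psi\le C$ with $\psi$ Lipschitz), the basic parabolic toolbox---Moser's interior Harnack, the weak maximum principle, boundary H\"older continuity, and Gaussian Dirichlet heat kernel bounds on cut-off cylinders---is available.

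The two central estimates I would establish are: for any non-negative solution $w$ of $\partial_t w = \Delta^\psi w$ vanishing on $B(x_0, 16\sqrt{a_0 t}) \cap \partial\Omega$ and any $x \in B(x_0, \sqrt{a_0 t})$,
$$w(x, t) \le C\,\tfrac{\rho(x)}{\sqrt t}\, w(x', 2t) \qquad\text{and}\qquad w(x, t) \ge c\,\tfrac{\rho(x)}{\sqrt t}\, w(x', t/2).$$
The first (a Carleson-type bound combined with linear boundary growth) follows from the weak maximum principle applied to $w$ on the parabolic cylinder $(\Omega\cap B(x_0, 16\sqrt{a_0 t}))\times(t/4, 2t]$, combined with a forward-in-time Harnack chain transferring values from the far part of the parabolic boundary to the corkscrew $(x', 2t)$, and then boundary Lipschitz regularity up to the straightened flat piece to pull out the factor $\rho(x)/\sqrt t$. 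The second is obtained by constructing a sub-barrier comparable to $\rho(x)/\sqrt t$ using the Dirichlet Green function (or Poisson kernel) of the cylinder; its Hopf-type lower bound is available because $\partial\Omega$ is Lipschitz and $\psi$ is Lipschitz and bounded away from zero, and the comparison principle then transfers the bound to $w$.

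Applying the upper bound to $u$ and the lower bound to $v$, the factors $\rho(x)/\sqrt t$ cancel in the quotient, giving
$$\frac{u(x,t)}{v(x,t)}\le \frac{C}{c}\cdot\frac{u(x', 2t)}{v(x', t/2)},$$
which is exactly the desired inequality. The main obstacle is the sub-barrier construction underlying the lower estimate: the constant must depend only on $(\Omega, \psi, r_0)$ and not on the scale $\sqrt t$, which forces the Lipschitz geometry of $\partial\Omega$ and the Lipschitz bound on $\psi$ to be used uniformly at every boundary point and every scale $t\le r_0$. The parabolic time asymmetry ($2t$ for $u$ versus $t/2$ for $v$) is intrinsic: the Carleson chain moves forward in time, so the upper bound picks up the future time $2t$, whereas the sub-barrier lower bound is propagated from an earlier time $t/2$ via the Gaussian lower bound for the Dirichlet kernel on the cylinder, reflecting the non-reversibility of the heat equation.
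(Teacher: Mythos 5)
The paper does not supply its own proof of this theorem; it is quoted verbatim from \cite[Theorem 1.6]{FGS86} and invoked as a black box. So the relevant comparison is between your argument and the known proof of Fabes--Garofalo--Salsa, not an argument in the present text.

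Your plan has a genuine gap at its core. You propose to establish, for each nonnegative solution $w$ vanishing on $B(x_0,16\sqrt{a_0 t})\cap\partial\Omega$, the two-sided linear-rate bounds
$$c\,\frac{\rho(x)}{\sqrt t}\,w(x',t/2)\ \le\ w(x,t)\ \le\ C\,\frac{\rho(x)}{\sqrt t}\,w(x',2t),$$
and then divide. But $\Omega$ is merely Lipschitz, and for a Lipschitz boundary neither of these estimates is true in general. On the upper side, solutions vanishing continuously on a Lipschitz boundary enjoy only boundary H\"older continuity with an exponent $\alpha\in(0,1]$ determined by the Lipschitz constant, not Lipschitz continuity up to the boundary, so the rate $\rho(x)$ cannot be extracted. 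On the lower side, the Hopf-type lower bound with linear rate fails as well: at a reentrant Lipschitz corner the Green function (or caloric measure, or the positive solution itself) vanishes like $\rho^\beta$ with $\beta>1$, and no sub-barrier comparable to $\rho(x)/\sqrt t$ exists. The constants in your two claimed estimates cannot, therefore, be made to depend only on $(\Omega,\psi)$. Interestingly, the corollaries you actually derive from this theorem in the paper live on $C^{1,1}$ domains, where linear boundary decay does hold --- but the theorem as stated (and as needed from FGS) is for Lipschitz domains, and your argument would be circular or restricted if it required more regularity.

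The point of the boundary Harnack principle, and the reason FGS do not argue as you do, is precisely that the unknown (and in general non-linear) boundary decay profile cancels in the ratio $u/v$ without ever being identified. The FGS route establishes a Carleson estimate (control of $u$ near the boundary in a parabolic cylinder by its value at the future corkscrew point $(x',2t)$) and then compares each solution directly to the caloric measure of a common ``bottom'' set seen from $(x,t)$: one shows $u(x,t)\lesssim \omega^{(x,t)}(\,\cdot\,)\,u(x',2t)$ via the maximum principle and the Carleson bound, and $v(x,t)\gtrsim \omega^{(x,t)}(\,\cdot\,)\,v(x',t/2)$ by propagating a lower bound from the earlier time slice. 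The caloric measure plays the role your explicit $\rho(x)/\sqrt t$ profile was meant to play, but it is not assumed to have any particular rate; only its appearance as a common factor matters. If you replace your two barrier estimates by this caloric-measure comparison, the rest of your outline (forward-in-time Carleson chain, the $2t$ vs. $t/2$ asymmetry, cancellation in the quotient) goes through and reconstructs the FGS argument.
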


\begin{thm}\label{lower-bound-heat}
Let $\Omega\subset\rr^2$ be a connected exterior $C^{1,1}$ domain. Then there exists a constant $C> 0$
such that, for any $x, y \in \Omega$ and $t >0$,
 \begin{equation}\label{heat-plane-2}
p_\Omega^{\phi}(t,x,y)\ge  C\frac{1}{\sqrt{\mu(x,\sqrt t)\mu(y,\sqrt t)}}\left(\frac{\rho(x)}{\sqrt t\wedge 1}\wedge 1\right)
\left(\frac{\rho(y)}{\sqrt t\wedge 1}\wedge 1\right)e^{-\frac{|x-y|^2}{ct}}.
\end{equation}
\end{thm}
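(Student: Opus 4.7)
The plan is to prove the lower bound in three stages: an \emph{interior} estimate when both $x$ and $y$ lie at distance $\gtrsim \sqrt{t}\wedge 1$ from $\partial\Omega$, a \emph{boundary-to-interior} reduction when one point is close to $\partial\Omega$, and an iteration for the doubly near-boundary case.

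In the deep-interior regime $\rho(x),\rho(y)\ge \sqrt{t}\wedge 1$, the cutoff factors both equal $1$, so the desired bound reduces to
$$p_\Omega^\phi(t,x,y)\gtrsim \frac{1}{\sqrt{\mu(x,\sqrt{t})\mu(y,\sqrt{t})}}\,e^{-|x-y|^2/ct},$$
which is a standard Dirichlet heat kernel lower bound on the non-parabolic, doubling weighted manifold $(\mathbb{R}^2,\phi^2\,dx)$ of Proposition \ref{prop:harmonic-es}. Since this weighted manifold is non-parabolic and satisfies the parabolic Harnack inequality, the estimate follows from the interior lower bound of Grigor'yan and Saloff-Coste \cite{gsc02} applied at points whose distance to $\partial\Omega$ is $\gtrsim\sqrt{t}\wedge 1$.

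Next, suppose $\rho(y)<\sqrt{t}\wedge 1$ while $\rho(x)\ge \sqrt{t}\wedge 1$. Pick $y_0\in\partial\Omega$ with $|y-y_0|=\rho(y)$, and fix a bounded $C^{1,1}$ subdomain $D\subset\Omega$ whose boundary contains a neighborhood of $y_0$ in $\partial\Omega$. Let $\phi_D$ be the normalized first eigenfunction of $-\Delta_D^\phi$ with eigenvalue $\lambda_D$, so that $v(y,s):=e^{-\lambda_D s}\phi_D(y)$ is a positive solution of the heat equation on $D$ that vanishes on $\partial D\cap\partial\Omega$, and Theorem \ref{lower-eigenfunction} gives $\phi_D(y)\sim \rho(y)$. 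Applying Theorem \ref{comparison} with $\psi=\phi$ to the positive heat solutions $u(y,s):=p_\Omega^\phi(s,x,y)$ and $v(y,s)$ on $D$, both vanishing continuously on $B(y_0,16\sqrt{a_0 t})\cap\partial\Omega$, yields
$$\frac{p_\Omega^\phi(t,x,y)}{\phi_D(y)}\;\gtrsim\;\frac{p_\Omega^\phi(2t,x,y')}{\phi_D(y')}\,e^{-\lambda_D t},$$
where $y'\in\Omega$ satisfies $\rho(y')\sim \sqrt{t}\wedge 1$ and $|y-y'|\lesssim \sqrt{t}$. Since $\phi_D(y)\sim\rho(y)$ and $\phi_D(y')\sim \sqrt{t}\wedge 1$, while $p_\Omega^\phi(2t,x,y')$ is controlled from below by the interior estimate of the previous paragraph (the Gaussian factor changing only by a constant because $|x-y'|\le |x-y|+\sqrt{t}$), this produces the desired boundary factor $\rho(y)/(\sqrt{t}\wedge 1)$.

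When both $\rho(x),\rho(y)<\sqrt{t}\wedge 1$, I would use the Chapman--Kolmogorov identity
$$p_\Omega^\phi(t,x,y)=\int_\Omega p_\Omega^\phi(t/2,x,z)\,p_\Omega^\phi(t/2,z,y)\,d\mu(z),$$
restrict the integration to a small ball around an interior point $z_0$ with $\rho(z_0)\sim \sqrt{t}\wedge 1$ chosen on a minimizing path between $x$ and $y$, and apply the previous boundary-to-interior step separately in each factor; this supplies both cutoff factors $\rho(x)/(\sqrt t\wedge 1)$ and $\rho(y)/(\sqrt t\wedge 1)$ with the correct Gaussian exponential. The main obstacle is that Theorem \ref{comparison} is valid only up to a small time $r_0$ depending on $\Omega$; for $t>r_0$ I would first split off a time increment of length $\sim r_0$ via Chapman--Kolmogorov, apply the small-time boundary estimate to that piece, and treat the remaining large-time contribution with the interior estimate of Grigor'yan--Saloff-Coste. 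Throughout, care must be taken to keep the auxiliary interior point within distance $\sqrt{t}$ of $y$ (and similarly for $x$), so that the Gaussian exponential $e^{-|x-y|^2/ct}$ is preserved up to a change in the constant $c$.
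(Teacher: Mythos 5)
Your proposal follows the same overall strategy as the paper's proof: combine the interior lower bound of Grigor'yan--Saloff-Coste on the weighted manifold $(\mathbb{R}^2,\phi^2\,dx)$ with the parabolic boundary comparison principle of Theorem~\ref{comparison} (FGS86) and the first-eigenfunction estimate of Theorem~\ref{lower-eigenfunction}, splitting into a small-time regime $t\le r_0$ and a large-time regime by shifting time. The paper implements the doubly near-boundary case by applying Theorem~\ref{comparison} twice sequentially (once for $x$ with a displaced point $x'$, then once more for $y$ against the new pair $(x',y')$), and handles $t>r_0$ by substituting $v(x,s)=p_\Omega^\phi(s+t_0,x,y)$ with $t_0=t-r_0$; your Chapman--Kolmogorov-plus-interior-point scheme is a workable alternative for both of these steps and should give the same result after the bookkeeping you flag.

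One concrete slip worth fixing: in your boundary-to-interior reduction you name $u(y,s):=p_\Omega^\phi(s,x,y)$ and $v(y,s):=e^{-\lambda_D s}\phi_D(y)$ and then claim the comparison gives
$\frac{p_\Omega^\phi(t,x,y)}{\phi_D(y)}\gtrsim\frac{p_\Omega^\phi(2t,x,y')}{\phi_D(y')}\,e^{-\lambda_D t}$.
With that assignment of $u$ and $v$, Theorem~\ref{comparison} in the form $\frac{u(y,t)}{v(y,t)}\le C\,\frac{u(y',2t)}{v(y',t/2)}$ yields only an \emph{upper} bound on $p_\Omega^\phi(t,x,y)$. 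To get the lower bound you must take $u$ to be the eigenfunction solution and $v$ the heat kernel (as in the paper), which gives
$p_\Omega^\phi(t,x,y)\;\gtrsim\;\frac{\phi_D(y)}{\phi_D(y')}\,e^{\lambda_D t}\,p_\Omega^\phi(t/2,x,y')$,
i.e.\ the time argument on the right is $t/2$, not $2t$, and the eigenvalue factor is $e^{+\lambda_D t}$. This does not damage the plan, because $p_\Omega^\phi(t/2,x,y')$ is controlled below by the interior estimate just as well; but the $2t$ you wrote would be awkward to iterate within the constraint $t\le r_0$, so the correct orientation matters.
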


\begin{proof}
Let $R:= \diam(\Omega^c)$. Without loss of generality, we can assume $r_0 \leq R/24$, where $r_0$ is as in Theorem \ref{comparison}.
Fix a point $x_0 \in \Omega^c$ and define the truncated domain $\tilde\Omega:=B(x_0,4 R)\cap \Omega$, where $R=\mbox{diam}(\Omega^c)$.
Let $\psi_1$ and $\lambda_1$ denote the first eigenfunction and the corresponding eigenvalue of the Laplace operator $\Delta^\phi_{\tilde\Omega}$, respectively.

By \cite[Corollary 3.5]{gsc02} and \cite[Lemma 2.1]{zha03}, for any $t >0$ and $x,y \in \Omega$ satisfying
$\rho(x) \ge \sqrt{a_0 t} \wedge \sqrt{a_0 r_0}$ and $\rho(y) \ge \sqrt{a_0 t} \wedge
\sqrt{a_0 r_0}$, it holds that
\begin{align}\label{heat-plane-2a}
 p_\Omega^{\phi}(t, x,y)
 & \geq \frac{C}{\sqrt{\mu(x,\sqrt t)\mu(y,\sqrt t)}}e^{-\frac{|x-y|^2}{ct}}.
\end{align}

For the remaining part, we shall consider the small time part and large time part separately.

{\bf Case 1: \boldmath $0 < t \leq r_0$}. Assume that $\rho(x)< \sqrt{a_0t}$.
Let $\bar x\in\partial\Omega$ satisfy $\rho(x)=|x-\bar x|.$
We can choose a point $x'\in\Omega$ such that $x$, $\bar x$ and $x'$
are in the same geodesic and $\rho(x')=8 \sqrt{a_0t}=|x'-\bar x|.$

Now we write $v(x,t):=p_\Omega^{\phi}(t,x,y)$ and $u(x,t):=e^{-t\lambda_1}\psi_1(x)$.
Both $u$ and $v$ are positive solutions of
the heat equation $\partial_t u=\Delta^\phi_{\tilde\Omega} u$ in $\tilde{\Omega} \times (t/4, \infty)$.
Hence, it follows from Theorem \ref{comparison} that there exists a constant $C>0$ such that
\begin{align*}
\frac{u(x,t)}{v(x,t)}\le C\frac{u(x',2t)}{v(x',t/2)}.
\end{align*}
Namely, it holds that
\begin{align*}
p_\Omega^{\phi}(t,x,y)
&\ge C e^{t\lambda_1} \frac{\psi_1(x)}{\psi_1(x')} p^\phi_\Omega(t/2,x',y) \\
&\ge  C \frac{\rho_{\tilde{\Omega}}(x)}{\rho_{\tilde{\Omega}}(x')} p^\phi_\Omega(t/2,x',y),
\end{align*}
where the last inequality is due to Theorem \ref{lower-eigenfunction}.
Besides, by the fact $\rho(x) \vee \rho(x') < R$, one obtains that  $\rho_{\tilde{\Omega}}(x) = \rho(x)$ and $\rho_{\tilde{\Omega}}(x') =\rho(x') = 8 \sqrt{a_0 t}$,
which gives that
$$
p_\Omega^{\phi}(t,x,y)
\ge  C \frac{\rho(x)}{\sqrt{t}} p^\phi_\Omega(t/2,x',y).
$$

For the case $\rho(y) \ge \sqrt{a_0 t}$, it follows from \eqref{heat-plane-2a} that
\[
p^\phi_\Omega(t/2,x',y)
\ge C \frac{1}{\mu(B(y, \sqrt t))} e^{-\frac{|x'-y|^2}{ct}}
\ge C \frac{1}{\mu(B(y, \sqrt t))} e^{-\frac{|x-y|^2}{ct}},
\]
where we used the fact that $|x'-y| \leq |x-y| + |x-x'| \leq |x-y| + 8 \sqrt{a_0t}$ in the last inequality. Therefore, one has
\[
p^\phi_\Omega(t,x,y)
\ge  C \frac{\rho(x)}{\sqrt{t}}  \frac{1}{\sqrt{\mu(x,\sqrt t)\mu(y,\sqrt t)}}e^{-\frac{|x-y|^2}{ct}}.
\]

For the case $\rho(y)< \sqrt{a_0 t}$, let $\bar{y} \in \partial \Omega$
satisfy $\rho(y) = |y - \bar{y}|$.
We can choose $y' \in \Omega$ such that $y$, $\bar y$ and $y'$
are in the same geodesic and $\rho(y')=8\sqrt{a_0t}$.
Using Theorem \ref{comparison} and \eqref{heat-plane-2a},
one sees that
\begin{align*}
  p^\phi_\Omega(t/2,x',y)
  &\ge C e^{\frac{1}{2}\lambda_1 t} \frac{\rho(y)}{\sqrt{t}} p^\phi_\Omega(t/4,x',y')  \\
  &\ge C \frac{\rho(y)}{\sqrt{t}} \frac{1}{\mu(y',\sqrt{t})} e^{-\frac{|x'-y'|^2}{ct}} \\
  &\ge C \frac{\rho(y)}{\sqrt{t}} \frac{1}{\mu(x,\sqrt{t})} e^{-\frac{|x-y'|^2}{ct}} \\
  &\ge C \frac{\rho(y)}{\sqrt{t}} \frac{1}{\mu(x,\sqrt{t})} e^{-\frac{|x-y|^2}{ct}},
\end{align*}
where we used the facts that
\[
|y'-x'| \leq |x-x'| + |x-y'| \leq |x-y'| + 8 \sqrt{a_0 t}
\]
and
\[
|x-y'| \leq |x-y| + |y-y'| \leq |x-y| + 8 \sqrt{a_0 t}.
\]
Therefore, it holds that
\begin{equation} \label{eq:sma-t}
p^\phi_\Omega(t/2,x',y)
\ge  C \frac{\rho(x)}{\sqrt{t}} \frac{\rho(y)}{\sqrt t} \frac{1}{\sqrt{\mu(x,\sqrt t)\mu(y,\sqrt t)}}e^{-\frac{|x-y|^2}{ct}}.
\end{equation}

{\bf Case 2: \boldmath $t \ge r_0$.} Assume that $\rho(x)< \sqrt{a_0 r_0}$.
Let $\bar x\in\partial\Omega$ satisfy $\rho(x)=|x-\bar x|.$
We can choose a point $x'\in\Omega$ such that $x$, $\bar x$ and $x'$
are in the same geodesic and $\rho(x')=8 \sqrt{a_0r_0}=|x'-\bar x|.$

Let $v(x,s):=p_\Omega^{\phi}(s+t_0,x,y)$ and
$u(x,s):=e^{-\lambda_1 s}\psi_1(x),$ where $t_0:=t-r_0$. Then
both $u$ and $v$ are positive solutions of
the heat equation $\partial_t u=\Delta^\phi_{\tilde\Omega} u$ in $\tilde{\Omega} \times (t/4, \infty)$.
By Theorem \ref{comparison} once more, we see that
\begin{align*}
v(x,r_0)&\ge C\frac{u(x,r_0) v(x',r_0/2)}{u(x',2r_0)}
\ge  Ce^{\lambda_1 r_0 }\frac{\psi_1(x)}{\psi_1(x')} p_\Omega^{\phi}(t_0+r_0/2,x',y).
\end{align*}
For the case $\rho(y)\ge \sqrt{a_0 r_0}$,
it follows from  \eqref{heat-plane-2a} that
$$
p_\Omega^{\phi}(t_0+r_0/2,x',y)
\ge C \frac{\rho(x)}{\sqrt r_0}
     \frac{1}{\mu(B(y,\sqrt{t_0 + r_0 /2}))}  e^{-\frac{|x'-y|^2}{c(t_0 + r_0 /2)}}
\ge C \frac{\rho(x)}{\sqrt{r_0}}\frac{1}{\sqrt{\mu(x,\sqrt t)\mu(y,\sqrt t)}}e^{-\frac{|x-y|^2}{ct}},
$$
where we used the facts that $ t/2 \le t_0 + r_0 /2 = t - r_0 /2 \le t$
and $|x'-y| \leq |x-y| + 8 \sqrt{a_0 r_0} \leq |x-y| + 8 \sqrt{a_0 t}$.
Therefore, it holds that
\begin{equation*}
p_{\Omega}^{\phi}(t,x,y)
\geq C \frac{\rho(x)}{\sqrt{r_0}}\frac{1}{\sqrt{\mu(x,\sqrt t)\mu(y,\sqrt t)}}e^{-\frac{|x-y|^2}{ct}}
\geq C \frac{\rho(x)}{\sqrt{t}}
\left(\frac{\rho(y)}{\sqrt t} \wedge 1 \right)
\frac{1}{\sqrt{\mu(x,\sqrt t)\mu(y,\sqrt t)}}e^{-\frac{|x-y|^2}{ct}}.
\end{equation*}
 For the case $\rho(y) < \sqrt{a_0 r_0}$, following the same argument as in \eqref{eq:sma-t}, one has
 \begin{equation*}
p_{\Omega}^{\phi}(t,x,y)
\geq C \frac{\rho(x)}{\sqrt{r_0}}\frac{\rho(y)}{\sqrt{r_0}}\frac{1}{\sqrt{\mu(x,\sqrt t)\mu(y,\sqrt t)}}e^{-\frac{|x-y|^2}{ct}}
\geq C \frac{\rho(x)}{\sqrt{t}}\frac{\rho(y)}{\sqrt{t}}\frac{1}{\sqrt{\mu(x,\sqrt t)\mu(y,\sqrt t)}}e^{-\frac{|x-y|^2}{ct}}.
\end{equation*}

Combining the estimates above for $p_{\Omega}^{\phi}(t,x,y)$, we find that the estimate \eqref{heat-plane-2} holds, which
completes the proof of Theorem \ref{lower-bound-heat}.
\end{proof}

From Theorems \ref{heat-upper} and \ref{lower-bound-heat}  together with Proposition
\ref{prop:harmonic-es},
we obtain the complete two-sided estimates for the heat kernel in the exterior domains on the plane.

\begin{thm}\label{twoside-bound-heat}
Let $\Omega\subset\rr^2$ be an exterior $C^{1,1}$ domain. Then it holds that
\begin{align*}
p_\Omega(t,x,y)
&\sim \frac{\log(e+\rho(x))\log(e+\rho(y))}{t\left(\log(e+\sqrt t)+\log(e+\rho(x))\right)^2}\left(\frac{\rho(x)}{\sqrt t\wedge 1}\wedge 1\right)\left(\frac{\rho(y)}{\sqrt t\wedge 1}\wedge 1\right)e^{-\frac{|x-y|^2}{ct}}\\
&\sim \frac{\log(e+\rho(x))\log(e+\rho(y))}{t\left(\log(e+\sqrt t)+\log(e+\rho(x)\vee\rho(y))\right)^2}\left(\frac{\rho(x)}{\sqrt t\wedge 1}\wedge 1\right)\left(\frac{\rho(y)}{\sqrt t\wedge 1}\wedge 1\right)e^{-\frac{|x-y|^2}{ct}}.
\end{align*}
Moreover, if $\Omega$ is an $\cdini$ domain, then the upper bound still holds.
\end{thm}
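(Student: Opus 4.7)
The strategy is to convert the sharp estimates for $p_\Omega^\phi$ (combining Theorems \ref{heat-upper} and \ref{lower-bound-heat}) back to estimates for $p_\Omega$ via Doob's transform, then to invoke Proposition \ref{prop:harmonic-es} for the explicit asymptotics of $\phi$ and $\mu$, and finally to massage the resulting symmetric expression into each of the two (asymmetric and $\vee$-symmetric) forms stated in the theorem by absorbing a logarithmic factor into the Gaussian.

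First I would use the fact that $\phi$ is strictly positive, smooth on $\mathbb{R}^2$, and harmonic on $\Omega$, so that the Doob identity $p_\Omega(t,x,y)=\phi(x)\phi(y)\,p_\Omega^\phi(t,x,y)$ applies. Combining this with Theorems \ref{heat-upper} and \ref{lower-bound-heat} (the latter requires only $C^{1,1}$; for the moreover statement on $\cdini$ domains we use just the upper half) gives
\[
p_\Omega(t,x,y)\sim \frac{\phi(x)\phi(y)}{\sqrt{\mu(x,\sqrt t)\mu(y,\sqrt t)}}\left(\frac{\rho(x)}{\sqrt t\wedge 1}\wedge 1\right)\left(\frac{\rho(y)}{\sqrt t\wedge 1}\wedge 1\right)e^{-|x-y|^2/(ct)}.
\]
Substituting $\phi(z)\sim\log(e+\rho(z))$ and $\mu(B(z,\sqrt t))\sim t[\log(e+\sqrt t)+\log(e+\rho(z))]^2$ from Proposition \ref{prop:harmonic-es} yields the \emph{geometric-mean form}
\[
p_\Omega(t,x,y)\sim\frac{\log(e+\rho(x))\log(e+\rho(y))}{t\,[\log(e+\sqrt t)+\log(e+\rho(x))]\,[\log(e+\sqrt t)+\log(e+\rho(y))]}\,(\cdots)\,e^{-|x-y|^2/(ct)}.
\]

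Writing $a:=\log(e+\sqrt t)+\log(e+\rho(x))$, $b:=\log(e+\sqrt t)+\log(e+\rho(y))$, it then remains to compare $1/(ab)$ (geometric-mean) with $1/a^{2}$ (first stated form) and with $1/(a\vee b)^{2}$ (second stated form). The trivial directions are immediate: $1/(a\vee b)^{2}\le 1/(ab)$, and, assuming without loss of generality $\rho(x)\ge\rho(y)$ so that $a\ge b$, also $1/a^{2}\le 1/(ab)$. For the opposite directions I would split into cases by comparing $\rho(x)\vee\rho(y)$ with $\sqrt t$. In the regime $\rho(x)\vee\rho(y)\lesssim\sqrt t$, both $a$ and $b$ are comparable to $\log(e+\sqrt t)$ and everything is comparable up to constants. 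In the regime where $\rho(x)\vee\rho(y)\gg \sqrt t$ and, say, $\rho(y)\gg\rho(x)$, the reverse triangle inequality gives $|x-y|\ge\rho(y)-\rho(x)\gtrsim\rho(y)$, so $|x-y|^{2}/t\gtrsim (\rho(y)/\sqrt t)^{2}$; since $b/a\lesssim 1+\log(\rho(y)/\sqrt t)/\log(e+\sqrt t)$ grows at most logarithmically in $\rho(y)/\sqrt t$, relaxing the Gaussian constant from $c$ to $c''>c$ absorbs the ratio $(a\vee b)^{2}/(ab)=(a\vee b)/(a\wedge b)$ into $e^{|x-y|^{2}(1/c-1/c'')/t}$. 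The argument for $a^{2}/(ab)$ is identical by symmetry.

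The main obstacle is this last Gaussian-absorption bookkeeping: one must verify that the logarithmic deterioration of the denominator ratio is uniformly dominated by the gain $e^{|x-y|^{2}\varepsilon/t}$ we pay for weakening the Gaussian, and check this carefully in all subregimes of $\sqrt t$, $\rho(x)$, and $\rho(y)$ (in particular when $t\gg 1$ and $\rho(y)/\sqrt t$ is only moderately large, the estimate $\log u\le e^{\varepsilon u^{2}}$ must be quantified correctly). Once this is done, the two displayed equivalences in the theorem follow at once, and the upper-bound-only statement on $\cdini$ domains follows from the same argument using only Theorem \ref{heat-upper} (cf. Remark \ref{rem-heat-n3}).
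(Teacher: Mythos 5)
Your proposal is correct and takes essentially the same approach the paper uses: combine the two-sided bound for $p^\phi_\Omega$ from Theorems \ref{heat-upper} and \ref{lower-bound-heat} with the Doob transform identity $p_\Omega(t,x,y)=\phi(x)\phi(y)p^\phi_\Omega(t,x,y)$ and the asymptotics $\phi(z)\sim\log(e+\rho(z))$, $\mu(B(z,\sqrt t))\sim t[\log(e+\sqrt t)+\log(e+\rho(z))]^2$ from Proposition \ref{prop:harmonic-es}. The paper records this only as a one-sentence deduction, so the "Gaussian-absorption bookkeeping" you identify as the main obstacle — showing that the ratio $(a\vee b)/(a\wedge b)$, which is $O(1)$ unless $\rho(x)\vee\rho(y)\gg\rho(x)\wedge\rho(y)$, is dominated by $e^{\varepsilon|x-y|^2/t}$ because $|x-y|\gtrsim\rho(x)\vee\rho(y)$ in that regime and $\log_+u\lesssim e^{\varepsilon u^2}$ — is precisely the content the paper leaves implicit, and your sketch of it is sound. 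One minor wrinkle: the "trivial" direction $1/a^2\le 1/(ab)$ for the first (asymmetric-in-$\rho(x)$) form is not a without-loss-of-generality statement since that form is genuinely asymmetric; but this does not affect the argument, since the comparison reduces to absorbing the ratio $a/b$ or $b/a$ in either case, exactly as you do.
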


\section{Green function and Riesz potentials}\label{s3}\hskip\parindent
Let us use the previous heat kernel estimate to derive estimate for Green's function, $\Gamma_\Omega(x,y)$, which is defined by
\begin{align*}
\Gamma_\Omega(x,y):=\int_0^\infty p_\Omega(t,x,y)\,dt.
\end{align*}
Equivalently, $\Gamma_\Omega(x,y)$ is the infimum
of all positive fundamental solutions of
the Laplace operator $\Delta_{\Omega}$; see \cite{gsc02}.

\begin{thm}
Let $\Omega \subset \mathbb{R}^2$ be an exterior $C^{1,1}$ domain. Then, it holds that for any $x, y \in \Omega$,
$$
\Gamma_\Omega(x,y)\sim
\begin{dcases}
1 + \log \frac{\rho(x)\wedge \rho(y)}{|x-y|},  &\,  \mbox{if} \, |x-y|<\rho(x)\wedge \rho(y)\wedge 1, \\
\frac{\left(\rho(x)\wedge 1\right)\left(\rho(y)\wedge 1\right)}{|x-y|^2\wedge 1}\log (e+\rho(x)\wedge \rho(y)), &\, \mbox{otherwise}.
\end{dcases}
$$
\end{thm}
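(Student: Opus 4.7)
The plan is to compute the Green's function directly via $\Gamma_\Omega(x,y) = \int_0^\infty p_\Omega(t,x,y)\,dt$, substituting the two-sided heat kernel bound from Theorem \ref{twoside-bound-heat}. By symmetry one may assume $\rho(x) \le \rho(y)$, so that $\log(e + \rho(x) \vee \rho(y)) = \log(e + \rho(y))$. A key preliminary observation for Case 1 is that when $|x-y| < \rho(x) \wedge \rho(y) \wedge 1$, subadditivity gives $\rho(y) \le \rho(x) + |x-y| < 2\rho(x)$, so $\rho(x) \sim \rho(y)$ and $\log(e+\rho(x)) \sim \log(e+\rho(y))$. In all cases, the Gaussian factor $e^{-|x-y|^2/ct}$ makes the contribution from $t \lesssim |x-y|^2$ negligible, so the integration effectively runs over $t \gtrsim |x-y|^2$, which I further split using the natural breakpoints $\rho(x)^2 \wedge 1$, $\rho(y)^2 \wedge 1$, and $1$. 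On each resulting piece the three factors $\rho(x)/(\sqrt t \wedge 1)\wedge 1$, $\rho(y)/(\sqrt t \wedge 1)\wedge 1$, and $\log(e+\sqrt t)+\log(e+\rho(y))$ collapse to elementary expressions in $t$, reducing the $t$-integral to a combination of $\int t^{-1}\,dt$, $\int t^{-3/2}\,dt$, $\int t^{-2}\,dt$, and $\int (t \log^2 t)^{-1}\,dt$.

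In Case 1, the dominant contribution comes from the range $t \in (|x-y|^2, \rho(x)^2 \wedge 1)$, where both boundary factors are $1$ and $\log(e+\sqrt t) \sim 1$; the integrand reduces to $\sim t^{-1} e^{-|x-y|^2/ct}$ and integrates to $\sim \log((\rho(x)\wedge 1)/|x-y|)$. The remaining ranges $t > \rho(x)^2 \wedge 1$ contribute bounded quantities whose sum is $O(1)$, yielding the claimed bound $\Gamma_\Omega(x,y) \sim 1 + \log(\rho(x)\wedge\rho(y)/|x-y|)$ (using $\rho(x) \sim \rho(y)$ to identify $\rho(x)\wedge 1$ with $\rho(x)\wedge\rho(y)\wedge 1$).

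In Case 2 (i.e., $|x-y| \ge \rho(x) \wedge \rho(y) \wedge 1$), the Gaussian forces $t \gtrsim |x-y|^2$, so on the range of integration at least one boundary factor is in the non-trivial regime $\rho/(\sqrt t \wedge 1)$. I plan to extract the prefactor $(\rho(x)\wedge 1)(\rho(y)\wedge 1)/(|x-y|^2 \wedge 1)$ by localizing to $t$ of order $|x-y|^2 \wedge 1$, and the logarithmic factor $\log(e + \rho(x)\wedge\rho(y))$ from the range $t \gtrsim |x-y|^2 \vee 1$, where the denominator $(\log(e+\sqrt t) + \log(e+\rho(y)))^2$ balances the numerator $\log(e+\rho(x))\log(e+\rho(y))$ and yields a convergent $t$-integral of size $\log(e+\rho(x))$. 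The main obstacle is the bookkeeping for Case 2: the computation branches into sub-cases depending on whether each of $\rho(x), \rho(y), |x-y|$ is $\lesssim 1$ or $\gtrsim 1$, and in each sub-case one must verify that the pieces from the distinct $t$-ranges combine exactly to the product form on the right-hand side without introducing extraneous logarithmic factors. Both the upper and matching lower bound emerge simultaneously from the two-sided heat kernel bound in Theorem \ref{twoside-bound-heat}.
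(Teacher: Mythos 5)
Your overall strategy — integrate the two‑sided heat kernel bound over $t$, discard the Gaussian‑suppressed short‑time piece, and split the remaining integral at natural breakpoints — is exactly the paper's approach; the paper just organizes the case split by $|x-y|\lessgtr 1$ first rather than by whether $|x-y|<\rho(x)\wedge\rho(y)\wedge 1$. So there is no real methodological difference. However, there is a concrete error in your Case 1.

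The problem is the assertion that after extracting $\log\bigl((\rho(x)\wedge 1)/|x-y|\bigr)$ from $t\in(|x-y|^2,\rho(x)^2\wedge 1)$, the remaining ranges $t>\rho(x)^2\wedge 1$ contribute only $O(1)$. This is true when $\rho(x)\wedge\rho(y)<1$, but it fails when $\rho(x)\wedge\rho(y)\ge 1$. In that regime all your breakpoints $\rho(x)^2\wedge 1$, $\rho(y)^2\wedge 1$, $1$ collapse to $1$, and the tail integral over $t>1$ is
\[
\int_1^{\infty}\frac{\log(e+\rho(x))\log(e+\rho(y))}{t\bigl(\log(e+\sqrt t)+\log(e+\rho(x)\vee\rho(y))\bigr)^{2}}\,dt\ \sim\ \log\bigl(e+\rho(x)\wedge\rho(y)\bigr),
\]
(as one sees by substituting $v=\log\sqrt t$, so the integral is $\sim L^2\int_0^\infty(v+L)^{-2}dv=L$ with $L:=\log(e+\rho(x)\vee\rho(y))$), which is \emph{not} $O(1)$ and can dominate the piece you retained. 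Consequently your Case 1 yields $\log(1/|x-y|)+O(1)$, which misses the $\log(\rho(x)\wedge\rho(y))$ in the target $1+\log\bigl(\rho(x)\wedge\rho(y)/|x-y|\bigr)$ when $\rho(x)\wedge\rho(y)\gg 1$, and your final "identification" $\log\bigl((\rho(x)\wedge\rho(y)\wedge 1)/|x-y|\bigr)\sim 1+\log\bigl(\rho(x)\wedge\rho(y)/|x-y|\bigr)$ is simply false there.

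The fix is what the paper actually does: in the range where both $\rho$'s exceed $1$, use $(\rho(x)\wedge\rho(y))^{2}$ (not $1$) as the breakpoint. On $(|x-y|^2,(\rho(x)\wedge\rho(y))^{2})$ the numerator $\log(e+\rho(x))\log(e+\rho(y))$ cancels against the denominator $(\log(e+\sqrt t)+\log(e+\rho(x)\vee\rho(y)))^{2}$, so the integrand is $\sim t^{-1}$ on the \emph{whole} range and the integral gives $\log\bigl(\rho(x)\wedge\rho(y)/|x-y|\bigr)$ in one stroke; the tail $t>(\rho(x)\wedge\rho(y))^{2}$ then adds $\sim\log(e+\rho(x)\wedge\rho(y))$, and both pieces together give the stated right‑hand side. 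Your appeal to $\log(e+\sqrt t)\sim 1$ is what forces the premature cutoff at $t\sim 1$; the cancellation of the log factors, not their triviality, is what keeps the integrand of size $t^{-1}$ out to $t\sim(\rho(x)\wedge\rho(y))^2$.
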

\begin{proof}
Similarly to  \cite[Theorem 5.2]{ly86} and \cite[Theorem 5.13]{gsa11}, it holds that
$$\Gamma_\Omega(x,y)\sim \int_{|x-y|^2}^\infty p_\Omega(t,x,y)\,dt.$$

By Theorem \ref{heat-upper}, for any $t \ge |x-y|^2$, one obtains
\begin{align*}
p_\Omega(t,x,y)
&\sim  \frac{\log(e+\rho(x))\log(e+\rho(y))}{t\left(\log(e+\sqrt t)+\log(e+\rho(y)\wedge \rho(x))\right)^2}
\left(\frac{\rho(x)}{\sqrt t\wedge 1}\wedge 1\right)\left( \frac{\rho(y)}{\sqrt t\wedge 1}\wedge 1\right) \\
& \sim  \frac{\log(e+\rho(x))\log(e+\rho(y))}{t\left(\log(e+\sqrt t)+\log(e+\rho(y)\vee\rho(x))\right)^2}
\left(\frac{\rho(x)}{\sqrt t\wedge 1}\wedge 1\right)\left( \frac{\rho(y)}{\sqrt t\wedge 1}\wedge 1\right).
\end{align*}
Next, we shall estimate $\Gamma_\Omega(x,y)$ by considering the following two cases.

{\bf Case 1: \boldmath $|x-y|\ge 1$.} In this case, one has
\begin{align*}
p_\Omega(t,x,y)&\sim   \frac{\log(e+\rho(x))\log(e+\rho(y))}{t\left(\log(e+\sqrt t)+\log(e+\rho(x) \wedge \rho(x))\right)^2}
\left(\rho(x)\wedge 1\right)\left(\rho(y)\wedge 1\right).
\end{align*}

{\bf Subcase 1.1: \boldmath $\rho(x)\wedge \rho(y)\le 1$.} In this subcase, we further have
\begin{align*}
p_\Omega(t,x,y)&\sim   \frac{\log(e+\rho(x))\log(e+\rho(y))}{t\log^2(e+\sqrt t)}\left(\rho(x)\wedge 1\right)\left(\rho(y)\wedge 1\right).
\end{align*}
%
Then, we see that
$$\rho(x)\vee\rho(y) \leq \rho(x) \wedge \rho(y) + |x-y| \leq 2|x-y|$$
and
$$|x-y| \leq \diam(\Omega^c) + \rho(x) + \rho(y) \lesssim e + \rho(x) \vee \rho(y).$$
It follows that
\begin{align*}
\int_{|x-y|^2}^\infty p_\Omega(t,x,y)\,dt
&\sim \int_{|x-y|^2}^\infty  \frac{\log(e+\rho(x)\wedge\rho(y))\log(e+\rho(x)\vee\rho(y))}{t\log^2(e+\sqrt t)}\left(\rho(x)\wedge 1\right)\left(\rho(y)\wedge 1\right)\,dt\nonumber\\
&\sim \left(\rho(x)\wedge 1\right)\left(\rho(y)\wedge 1\right) \frac{\log(e+\rho(x)\wedge \rho(y))\log(e+\rho(x) \vee \rho(y))}{\log(e+|x-y|)} \nonumber\\
&\sim \left(\rho(x)\wedge 1\right)\left(\rho(y)\wedge 1\right),
\end{align*}
since $e+\rho(x) \wedge \rho(y) \sim e$ and $e + \rho(x) \vee\rho(y) \sim e + |x-y|$.

{\bf Subcase 1.2: \boldmath $\rho(x)\wedge \rho(y)\ge 1$.} In this subcase, one has
\begin{align*}
p_\Omega(t,x,y)
&\sim  \frac{\log(e+\rho(x))\log(e+\rho(y))}{t\left(\log(e+\sqrt t)+\log(e+\rho(x)\vee \rho(y))\right)^2}
\end{align*}
and
$$
|x - y |
\leq \diam(\Omega^c) + \rho(x) + \rho(y) \lesssim \rho(x) \vee \rho(y).
$$
Thus, it follows that
\begin{align*}
\int_{|x-y|^2}^\infty p_\Omega(t,x,y)\,dt &\sim \int_{|x-y|^2}^\infty \frac{\log(e+\rho(x))\log(e+\rho(y))}{t\left(\log(e+\sqrt t)
+\log(e+\rho(x)\vee\rho(y))\right)^2}\,dt\\
&\sim \frac{\log(e + \rho(x) \wedge \rho(y)) \log(e + \rho(x) \vee \rho(y))}
{\log(e + |x-y|) + \log(e + \rho(x) \vee \rho(y))} \\
& \sim
\log(e+\rho(x)\wedge\rho(y)),
\end{align*}
since $\log(e + |x-y|) + \log(e + \rho(x) \vee \rho(y))
\sim \log(e + \rho(x) \vee \rho(y)) $.

{\bf Case 2: \boldmath $|x-y|<1$.} In this case, we shall further divide it into three subcases.

{\bf Subcase 2.1: \boldmath $\rho(x)\wedge \rho(y)\ge 1$.}
In this subcase, it holds that
\[
\left(\frac{\rho(x)}{\sqrt t\wedge 1}\wedge 1\right) = 1
\quad\text{and}\quad
\left(\frac{\rho(y)}{\sqrt t\wedge 1}\wedge 1\right) = 1,
\]
which means that
\begin{align*}
p_\Omega(t,x,y)&\sim  \frac{\log(e+\rho(x))\log(e+\rho(y))}{t\left(\log(e+\sqrt t)+\log(e+\rho(x)\vee\rho(y))\right)^2}.
\end{align*}
Besides, we have $\rho(x) \vee \rho(y) \leq |x-y| + \rho(x) \wedge \rho(y) \leq 2\rho(x) \wedge \rho(y)$ and hence $\rho(x) \vee \rho(y) \sim  \rho(x) \wedge \rho(y)$.
Then one obtains that
\begin{align*}
\int_{|x-y|^2}^\infty p_\Omega(t,x,y)\,dt
&\sim \int_{|x-y|^2}^{(\rho(x) \wedge \rho(y))^2} \frac{\log(e+\rho(x))\log(e+\rho(y))}{t\left(\log(e+\sqrt t)+\log(e+\rho(x)\vee\rho(y))\right)^2}\,dt\\
&\quad+ \int_{(\rho(x) \wedge \rho(y))^2}^\infty \frac{\log(e+\rho(x))\log(e+\rho(y))}{t\left(\log(e+\sqrt t)+\log(e+\rho(x)\vee\rho(y))\right)^2}\,dt\\
& \sim \frac{\log(e+\rho(x))\log(e+\rho(y))}{(\log(e+\rho(x)\vee\rho(y)))^2}
  \int_{|x-y|^2}^{(\rho(x) \wedge \rho(y))^2}  \frac{1}{t} \, dt \\
&\quad+ \frac{\log(e+\rho(x))\log(e+\rho(y))}{\log(e+ \rho(x)\wedge \rho(y)))+\log(e+\rho(x)\vee\rho(y))} \\
&\sim \log \frac{\rho(x)\wedge \rho(y)}{|x-y|}+\log(e+\rho(x)\wedge \rho(y)).
\end{align*}
If $\rho(x) \wedge \rho(y) \ge 2 |x-y|$, then it holds that
\[
\log \frac{\rho(x)\wedge \rho(y)}{|x-y|}+\log(e+\rho(x)\wedge \rho(y)) \sim \log \frac{\rho(x)\wedge \rho(y)}{|x-y|}
\sim  1  + \log \frac{\rho(x)\wedge \rho(y)}{|x-y|}.
\]
If $|x-y| < \rho(x) \wedge \rho(y) < 2 |x-y| < 2$, then one has
\[
\log \frac{\rho(x)\wedge \rho(y)}{|x-y|}+\log(e+\rho(x)\wedge \rho(y)) \sim 1
+ \log \frac{\rho(x)\wedge \rho(y)}{|x-y|}.
\]
Thus, in this subcase, we have
$$
\int_{|x-y|^2}^\infty p_\Omega(t,x,y)\,dt
\sim 1
+ \log \frac{\rho(x)\wedge \rho(y)}{|x-y|}.
$$

{\bf Subcase 2.2: \boldmath $\rho(x)\wedge \rho(y)<1\le \rho(x)\vee \rho(y)$.} In this subcase, it holds that
\begin{align*}
p_\Omega(t,x,y)&\sim  \frac{\log(e+\rho(x))\log(e+\rho(y))}{t\left(\log(e+\sqrt t)+\log(e+\rho(x)\vee\rho(y))\right)^2}
\left(\frac{\rho(x)\wedge \rho(y)}{\sqrt t\wedge 1}\wedge 1\right).
\end{align*}
If $|x-y|\le\rho(x)\wedge \rho(y) $, then one has $\rho(x)\wedge \rho(y)\sim \rho(x)\vee \rho(y)$, which means that for any $0 <t \leq 1$,
$$
\frac{\log(e+\rho(x))\log(e+\rho(y))}{\left(\log(e+\sqrt t)+\log(e+\rho(x)\vee\rho(y))\right)^2}
\sim
\frac{\log(e+\rho(x))\log(e+\rho(y))}{(\log(e+\rho(x)\vee\rho(y)))^2}
\sim 1.$$
This further yields that
\begin{align*}
\int_{|x-y|^2}^\infty p_\Omega(t,x,y)\,dt
&\sim \int_{|x-y|^2}^{(\rho(x)\wedge \rho(y))^2} \frac 1t \,dt+  \int_{(\rho(x)\wedge \rho(y))^2}^1 \frac {\rho(x)\wedge \rho(y)}{t^{3/2}} \,dt\nonumber\\
&\ +\int_{1}^\infty  \frac{\log(e+\rho(x))\log(e+\rho(y))}{t\left(\log(e+\sqrt t)+\log(e+\rho(x)\vee\rho(y))\right)^2}(\rho(x)\wedge \rho(y))\,dt\nonumber \\
&\sim  \log\frac{(\rho(x)\wedge \rho(y))}{|x-y|}
 + 1 - \rho(x) \wedge \rho(y) +
 \rho(x) \wedge \rho(y)
 \frac{\log(e+\rho(x))\log(e+\rho(y))}{\log(e + \rho(x) \vee \rho(y))} \\
&\sim  \log\frac{(\rho(x)\wedge \rho(y))}{|x-y|}
 + 1 - \rho(x) \wedge \rho(y) + \rho(x) \wedge \rho(y) \log(e + \rho(x) \wedge \rho(y) ) \\
&\sim \log\frac{(\rho(x)\wedge \rho(y))}{|x-y|} + 1 - \rho(x) \wedge \rho(y) +  \rho(x) \wedge \rho(y) \\
& \sim 1 +  \log\frac{(\rho(x)\wedge \rho(y))}{|x-y|}.
\end{align*}

If $|x-y|>\rho(x)\wedge \rho(y)$, then we have
$1 \leq \rho(x)\vee \rho(y) \leq \rho(x) \wedge \rho(y) + |x-y|
\leq  2 |x-y|,$
which implies that for any $t \ge |x-y|^2 \ge 1/4$,
\[
\left(\frac{\rho(x)\wedge \rho(y)}{\sqrt t\wedge 1}\wedge 1\right)
\sim
\rho(x) \wedge \rho(y).
\]
It follows that
\begin{align*}
\int_{|x-y|^2}^\infty p_\Omega(t,x,y)\,dt &\sim \int_{|x-y|^2}^\infty  \frac{\log(e+\rho(x))\log(e+\rho(y))}{t\left(\log(e+\sqrt t)+\log(e+\rho(x)\vee\rho(y))\right)^2}(\rho(x)\wedge \rho(y))\,dt\nonumber \\
&\sim  \rho(x) \wedge \rho(y)
  \frac{\log(e+\rho(x))\log(e+\rho(y))}{\log(e+ |x-y|) + \log(e + \rho(x) \vee \rho(y))} \\
& \sim \rho(x) \wedge \rho(y)  \frac{\log(e+\rho(x))\log(e+\rho(y))}{\log(e + \rho(x) \vee \rho(y))}  \\
& \sim \rho(x) \wedge \rho(y)  \log(e+\rho(x) \wedge \rho(y)) \\
& \sim \rho(x) \wedge \rho(y)
 \sim \left(\rho(x)\wedge 1\right)\left(\rho(y)\wedge 1\right),
\end{align*}
since $\rho(x) \wedge \rho(y) < 1 \leq \rho(x) \vee \rho(y)$.

{\bf Subcase 2.3: \boldmath $\rho(x)\wedge \rho(y)\le \rho(x)\vee \rho(y)<1$.}
In this subcase, we have for any  $0 <  t \leq  1$,
\[
\frac{\log(e+\rho(x))\log(e+\rho(y))}{\left(\log(e+\sqrt t)
+\log(e+\rho(x)\vee\rho(y))\right)^2}\sim 1.
\]
Besides, for any $t \ge 1$, it holds that
\[
    \frac{\log(e+\rho(x))\log(e+\rho(y))}{\left(\log(e+\sqrt t)
    +\log(e+\rho(x)\vee\rho(y))\right)^2}
    \sim \frac{1}{ (\log(e + \sqrt t))^2}.
\]

If $|x-y|< \rho(x)\wedge \rho(y)$,  then we have
\begin{align*}
\int_{|x-y|^2}^\infty p_\Omega(t,x,y)\,dt
& \sim \int_{|x-y|^2}^{(\rho(x)\wedge \rho(y))^2} p_\Omega(t,x,y)\,dt
 + \int_{(\rho(x)\wedge \rho(y))^2}^{(\rho(x)\vee \rho(y))^2}   p_\Omega(t,x,y)\,dt \\
&\quad+ \int_{(\rho(x)\vee \rho(y))^2}^1   p_\Omega(t,x,y)\,dt
+\int_1^\infty   p_\Omega(t,x,y)\,dt \\
&\sim \int_{|x-y|^2}^{(\rho(x)\wedge \rho(y))^2} \frac{1}{t} \,dt
  + \int_{(\rho(x)\wedge \rho(y))^2}^{(\rho(x)\vee \rho(y))^2}
  \frac{1}{t} \frac{\rho(x) \wedge \rho(y)}{\sqrt t}\,dt \\
&\quad+ \int_{(\rho(x)\vee \rho(y))^2}^1  \frac{1}{t} \frac{\rho(x) \wedge \rho(y)}{\sqrt t}  \frac{\rho(x) \vee \rho(y)}{\sqrt t}\,dt
 + \int_1^\infty   \frac{\rho(x) \rho(y)}{t (\log(e + \sqrt t))^2} \,dt \\
& \sim \log \frac{\rho(x)\wedge \rho(y)}{|x-y|}
+  \rho(x) \wedge \rho(y) \left( \frac{1}{\rho(x) \wedge \rho(y)} - \frac{1}{\rho(x) \vee \rho(y)} \right) \\
&\quad+ \rho(x) \wedge \rho(y) \rho(x) \vee \rho(y)
   \left( \frac{1}{ (\rho(x) \vee \rho(y))^2}  - 1 \right)
+ \rho(x) \rho(y)  \\
& \sim \log \frac{\rho(x)\wedge \rho(y)}{|x-y|}
 + 1- \frac{\rho(x) \wedge \rho(y)}{\rho(x) \vee \rho(y)}
 + \frac{\rho(x) \wedge \rho(y)}{\rho(x) \vee \rho(y)} - \rho(x) \rho(y)
  + \rho(x) \rho(y)  \\
 & \sim 1 + \log \frac{\rho(x)\wedge \rho(y)}{|x-y|}.
\end{align*}

If $\rho(x)\wedge \rho(y)\le |x-y|$, then we have
$$\rho(x)\vee \rho(y)\le |x-y|+\rho(x)\wedge \rho(y)\le 2|x-y|,$$
which further yields that
\begin{align*}
\int_{|x-y|^2}^\infty p_\Omega(t,x,y)\,dt
& \sim \int_{|x-y|^2}^1   p_\Omega(t,x,y)\,dt
        + \int_1^\infty   p_\Omega(t,x,y)\,dt \\
& \sim \int_{|x-y|^2}^1  \frac{1}{t} \frac{\rho(x)}{\sqrt t} \frac{\rho(y)}{\sqrt t} \,dt + \int_1^\infty
\frac{\rho(x) \rho(y)}{t(\log(e + \sqrt t))^2}\,dt \\
& \sim  \frac{\rho(x)\rho(y)}{|x-y|^2}-\rho(x)\rho(y) + \rho(x) \rho(y)
 \sim \frac{\rho(x)\rho(y)}{|x-y|^2}.
\end{align*}

Summarizing  the above estimates, we get the desired estimate.
\end{proof}

\begin{rem}\rm
If we assume that $\Omega$ is an exterior $\cdini$ domain, then
the upper bound for $\Gamma_\Omega$ still holds.
\end{rem}

\begin{prop}\label{riesz-potential-1}
Let $\Omega$ be an exterior $\cdini$ domain in the plane. Then, for any $0<s<2$, the Riesz potential
$$
(-\Delta_{\Omega})^{-\frac{s}{2}}(x, y):=\frac{1}{\Gamma(s/2)} \int_0^{\infty} t^{\frac{s}{2}} e^{t \Delta_{\Omega}}(x, y)\,\frac{dt}{t}
$$
satisfies
$$(-\Delta_{\Omega})^{-\frac{s}{2}}(x, y)\lesssim \frac{1}{|x-y|^{2-s}}
\frac{\log(e+\rho(x)\wedge \rho(y))}{\log(e+\rho(y)\vee\rho(x))}\left(\frac{\rho(x)}
{|x-y| \wedge 1} \wedge 1\right)
\left(\frac{\rho(y)}{|x-y| \wedge 1} \wedge 1\right).$$
\end{prop}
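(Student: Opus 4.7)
The plan is to insert the heat-kernel upper bound from Theorem \ref{cor:upper-bound} (the case $n=2$) directly into the subordination identity
$$(-\Delta_{\Omega})^{-\frac{s}{2}}(x,y)=\frac{1}{\Gamma(s/2)}\int_0^{\infty}t^{s/2-1}\,p_\Omega(t,x,y)\,dt,$$
and then split the $t$-integral according to the natural thresholds $|x-y|^2$, $\rho(x)^2\wedge\rho(y)^2$, $\rho(x)^2\vee\rho(y)^2$, and $1$. This is essentially the same strategy used in the proof of Theorem \ref{green-function} (the case $s=2$), adapted with an extra weight $t^{s/2-1}$.

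First, I would peel off the Gaussian factor. Because $e^{-|x-y|^2/(ct)}$ gives superpolynomial decay as $t\downarrow 0$, the contribution of $0<t\le |x-y|^2$ to the integral is, up to constants, the same as $|x-y|^{s-2}$ times the non-Gaussian part of $p_\Omega$ evaluated at $t\sim |x-y|^2$; this yields the prefactor $|x-y|^{s-2}$ together with the correct boundary factors $\left(\rho(x)/(|x-y|\wedge 1)\wedge 1\right)\left(\rho(y)/(|x-y|\wedge 1)\wedge 1\right)$ and the correct log-quotient, since at $t\sim|x-y|^2\lesssim (\rho(x)\vee\rho(y)+e)^2$ the factor
$$\frac{\log(e+\rho(x))\log(e+\rho(y))}{(\log(e+\sqrt t)+\log(e+\rho(x)\vee\rho(y)))^2}\sim \frac{\log(e+\rho(x)\wedge\rho(y))}{\log(e+\rho(x)\vee\rho(y))}.$$

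Next, for the remaining region $t>|x-y|^2$ the Gaussian factor is $\sim 1$ and one must use the intrinsic $1/t$-decay of $p_\Omega$. Here I would further subdivide based on how $\sqrt t$ compares with $\rho(x)\wedge\rho(y)$, $\rho(x)\vee\rho(y)$, and $1$: in each subinterval the min-factors $\left(\rho(x)/(\sqrt t\wedge 1)\wedge 1\right)\left(\rho(y)/(\sqrt t\wedge 1)\wedge 1\right)$ collapse to an explicit power of $\sqrt t$ (or become $\equiv 1$), and the logarithmic quotient is either comparable to $\log(e+\rho(x)\wedge\rho(y))/\log(e+\rho(x)\vee\rho(y))$ (when $\sqrt t\lesssim\rho(x)\vee\rho(y)+e$) or gains an extra negative power of $\log(e+\sqrt t)$ (when $\sqrt t\gg\rho(x)\vee\rho(y)$). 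In both cases the resulting one-variable integrals $\int t^{s/2-2}(\cdots)\,dt$ are elementary and, because $s<2$, converge at infinity.

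Collecting the cases, in each regime ($|x-y|\ge 1$ vs.\ $|x-y|<1$; $\rho(x)\wedge\rho(y)$ above or below the relevant thresholds), the small-time contribution dominates and produces the bound asserted in the proposition, while the large-time contribution is controlled by the same quantity. The main obstacle is not analytic but combinatorial: coordinating the many case splits so that the three advertised factors — the power $|x-y|^{s-2}$, the boundary factors at the scale $|x-y|\wedge 1$, and the logarithmic ratio — emerge cleanly and uniformly. Since the case-analysis mirrors the one carried out for $\Gamma_\Omega$ in the proof of Theorem \ref{green-function}, I expect no genuinely new difficulty beyond careful bookkeeping of these logarithmic and min-type factors.
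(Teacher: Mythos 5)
Your proposal is correct and follows essentially the same approach as the paper: insert the upper bound from Theorem \ref{heat-upper} into the subordination formula and split the $t$-integral at the thresholds $|x-y|^2$, $(\rho(x)\wedge\rho(y))^2$, $(\rho(x)\vee\rho(y))^2$, and $1$, relying on the Gaussian factor to tame the small-$t$ singularity and on $s<2$ at infinity. One streamlining the paper uses that would ease the bookkeeping you flagged as the "main obstacle": since $\log(e+\sqrt t)+\log(e+\rho(x)\vee\rho(y))\ge\log(e+\rho(x)\vee\rho(y))$ uniformly in $t$, the entire logarithmic factor can be pulled out of the integral at the outset as $\frac{\log(e+\rho(x)\wedge\rho(y))}{\log(e+\rho(x)\vee\rho(y))}$, leaving the purely power-law integrand $J(x,y,t)=t^{s/2-2}\bigl(\tfrac{\rho(x)}{\sqrt t\wedge 1}\wedge 1\bigr)\bigl(\tfrac{\rho(y)}{\sqrt t\wedge 1}\wedge 1\bigr)e^{-|x-y|^2/(ct)}$ to handle by cases.
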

\begin{proof}
It follows from Theorem \ref{heat-upper} that
\begin{align*}
p_\Omega(t,x,y)&\lesssim  \frac{\log(e+\rho(x))\log(e+\rho(y))}{t\left(\log(e+\sqrt t)+\log(e+\rho(y)\vee\rho(x))\right)^2}
\left(\frac{\rho(x)}{\sqrt t\wedge 1}\wedge 1\right)\left( \frac{\rho(y)}{\sqrt t\wedge 1}\wedge 1\right)
e^{-\frac{|x-y|^2}{ct}} \\
& \lesssim \frac{\log(e+\rho(x)\wedge \rho(y))}{\log(e+\rho(y)\vee\rho(x))}
\frac{1}{t} \left(\frac{\rho(x)}{\sqrt t\wedge 1}\wedge 1\right)\left( \frac{\rho(y)}{\sqrt t\wedge 1}\wedge 1\right)
e^{-\frac{|x-y|^2}{ct}}
\end{align*}
and hence
\[
 (-\Delta_{\Omega})^{-\frac{s}{2}}(x, y)\lesssim
\frac{\log(e+\rho(x)\wedge \rho(y))}{\log(e+\rho(y)\vee\rho(x))}
\int_0^\infty J(x,y,t) dt
= \frac{\log(e+\rho(x)\wedge \rho(y))}{\log(e+\rho(y)\vee\rho(x))}
J(x,y),
\]
where
\[
   J(x,y,t):= t^{\frac{s}{2}-2}\left(\frac{\rho(x)}{\sqrt t\wedge 1}\wedge 1\right)\left( \frac{\rho(y)}{\sqrt t\wedge 1}\wedge 1\right)
e^{-\frac{|x-y|^2}{ct}}.
\]
Next, we shall consider the following three cases separately.

{\bf Case 1:  \boldmath $\rho(x)\wedge \rho(y) \ge 1$.} In this case, it holds that
\[
    \left(\frac{\rho(x)}{ |x-y|\wedge 1}\wedge 1\right) = 1,
    \quad
     \left(\frac{\rho(y)}{ |x-y|\wedge 1}\wedge 1\right) = 1,
\]
and
\[
 J(x,y,t) = t^{\frac{s}{2}-2}e^{-\frac{|x-y|^2}{ct}}.
\]
It follows that
\begin{align*}
    J(x,y)
    &=   \int_0^{|x-y|^2} J(x,y,t) \,dt
    + \int_{|x-y|^2}^\infty J(x,y,t)  \,dt
     \\
    & \lesssim \int_0^{|x-y|^2} t^{\frac{s}{2}-2} \frac{t^{2-\frac{s}{2}}}{|x-y|^{4-s}} \,dt
     +  \int_{|x-y|^2}^\infty t^{\frac{s}{2}-2} \,dt \\
    &\sim \frac{1}{|x-y|^{2-s}}.
\end{align*}

{\bf Case 2: \boldmath $\rho(x)\wedge \rho(y) <1\le \rho(x)\vee \rho(y) $.} In this case, it holds that
$$
\left(\frac{\rho(x)}
{|x-y| \wedge 1} \wedge 1\right)
\left(\frac{\rho(y)}{|x-y| \wedge 1} \wedge 1\right)
= \left(\frac{\rho(x)\wedge \rho(y)}{ |x-y|\wedge 1}\wedge 1\right)
$$
and
$$
J(x,y,t)
  =t^{\frac{s}{2}-2} e^{-\frac{|x-y|^2}{ct}}\left(\frac{\rho(x)\wedge \rho(y)}{\sqrt t\wedge 1}\wedge 1\right).
$$

If $|x-y|< \rho(x)\wedge \rho(y)$, then one has
\[
    \frac{\rho(x) \wedge \rho(y)}{|x-y| \wedge 1} \wedge 1
     = \frac{\rho(x) \wedge \rho(y)}{|x-y|} \wedge 1 = 1.
\]
Besides, it holds that
\[
J(x,y,t)
\leq  t^{\frac{s}{2}-2} e^{-\frac{|x-y|^2}{ct}}
\]
and
\begin{align*}
  J(x,y)
& \lesssim \int_0^{|x-y|^2} t^{\frac{s}{2}-2}
e^{-\frac{|x-y|^2}{ct}}
\,dt
 +  \int_{|x-y|^2}^\infty t^{\frac{s}{2}-2} \,dt
\lesssim  \frac{1}{|x-y|^{2-s}}.
\end{align*}

If $|x-y|\ge \rho(x)\wedge \rho(y)$, then one has
$$1\leq \rho(x)\vee \rho(y)\leq |x-y|
+ \rho(x) \wedge \rho(y) \leq 2|x-y|$$
and
$$
  \frac{\rho(x) \wedge \rho(y)}{|x-y| \wedge 1}
    \sim \rho(x) \wedge \rho(y).
$$
It follows that
\begin{align*}
  J(x,y)
  & = \int_0^{(\rho(x)\wedge \rho(y))^2} J(x,y,t)\,dt
  + \int_{(\rho(x)\wedge \rho(y))^2}^{1}J(x,y,t) \,dt \\
  &\quad + \int_1^{4|x-y|^2} J(x,y,t) \,dt
   + \int_{4|x-y|^2}^\infty J(x,y,t) \,dt \\
  & \lesssim \int_0^{(\rho(x)\wedge \rho(y))^2} t^{\frac{s}{2}-2}
e^{-\frac{|x-y|^2}{ct}} \,dt
   + \int_{(\rho(x)\wedge \rho(y))^2}^{1} t^{\frac{s}{2}-2} \frac{\rho(x) \wedge \rho(y)}{\sqrt t} e^{-\frac{|x-y|^2}{ct}} \,dt  \\
  &\quad + \int_1^{4|x-y|^2}  t^{\frac{s}{2}-2} \rho(x) \wedge \rho(y) e^{-\frac{|x-y|^2}{ct}} \,dt
   + \int_{4|x-y|^2}^\infty t^{\frac{s}{2}-2} \rho(x) \wedge \rho(y)  \,dt \\
  & \lesssim \frac{(\rho(x) \wedge \rho(y))^2}{|x-y|^{4-s}}
     + \frac{\rho(x) \wedge \rho(y)}{|x-y|^{5-s}}
     + \frac{\rho(x) \wedge \rho(y)}{|x-y|^{2-s}}
     + \frac{\rho(x) \wedge \rho(y)}{|x-y|^{2-s}} \\
  & \lesssim \frac{\rho(x) \wedge \rho(y)}{|x-y|^{2-s}}.
\end{align*}

{\bf Case 3: \boldmath $\rho(x)\vee \rho(y)<1$.}
If $|x-y|< \rho(x)\wedge \rho(y)$, then it holds that
$$
\left(\frac{\rho(x)}
{|x-y| \wedge 1} \wedge 1\right)
\left(\frac{\rho(y)}{|x-y| \wedge 1} \wedge 1\right)
= 1.
$$
It follows that
\begin{align*}
  J(x,y)
& \lesssim \int_0^{|x-y|^2} t^{\frac{s}{2}-2}
e^{-\frac{|x-y|^2}{ct}}
\,dt
 +  \int_{|x-y|^2}^\infty t^{\frac{s}{2}-2} \,dt
\lesssim  \frac{1}{|x-y|^{2-s}}.
\end{align*}

If $|x-y|\ge  \rho(x)\wedge \rho(y)$, then one has
$$\rho(x)\vee\rho(y) \leq 2 |x-y|.$$
For the case $|x-y| < \frac{1}{2}$, it holds that
$$
\left(\frac{\rho(x)}
{|x-y| \wedge 1} \wedge 1\right)
\left(\frac{\rho(y)}{|x-y| \wedge 1} \wedge 1\right)
\sim \frac{\rho(x) \rho(y)}{|x-y|^2}.
$$
One can further conclude that
\begin{align*}
  J(x,y)
  & = \int_0^{(\rho(x)\wedge \rho(y))^2} J(x,y,t)\,dt
  + \int_{(\rho(x)\wedge \rho(y))^2}^{(\rho(x)\vee \rho(y))^2} J(x,y,t) \,dt \\
  &\quad + \int_{(\rho(x)\vee \rho(y))^2}^{4|x-y|^2} J(x,y,t) \,dt
   + \int_{4|x-y|^2}^1 J(x,y,t) \,dt
   + \int_1^\infty J(x,y,t) \,dt \\
   &\lesssim  \int_0^{(\rho(x)\wedge \rho(y))^2} t^{\frac{s}{2}-2} e^{-\frac{|x-y|^2}{ct}} \,dt
  + \int_{(\rho(x)\wedge \rho(y))^2}^{(\rho(x)\vee \rho(y))^2} t^{\frac{s}{2}-2}
  \frac{\rho(x) \wedge \rho(y)}{\sqrt t} e^{-\frac{|x-y|^2}{ct}} \,dt \\
&\quad + \int_{(\rho(x)\vee \rho(y))^2}^{4|x-y|^2} t^{\frac{s}{2}-2}  \frac{\rho(x)}{\sqrt t} \frac{\rho(y)}{\sqrt t}  e^{-\frac{|x-y|^2}{ct}} \,dt
 + \int_{4|x-y|^2}^1 t^{\frac{s}{2}-2}  \frac{\rho(x)}{\sqrt t} \frac{\rho(y)}{\sqrt t} \,dt  + \int_1^\infty  t^{\frac{s}{2}-2} \rho(x) \rho(y)\,dt  \\
 & \lesssim \frac{(\rho(x)\wedge \rho(y))^2}{|x-y|^{4-s}}
              + \frac{\rho(x) \wedge \rho(y)(\rho(x)\vee \rho(y))^2}{|x-y|^{5-s}}
    + \frac{\rho(x)\rho(y)}{|x-y|^{4-s}} + \frac{\rho(x)\rho(y)}{|x-y|^{2-s}} + \rho(x)\rho(y) \\
 & \lesssim \frac{\rho(x)\rho(y)}{|x-y|^{4-s}}.
\end{align*}
For the case $|x-y| \ge \frac{1}{2}$, it holds that
$$
\left(\frac{\rho(x)}
{|x-y| \wedge 1} \wedge 1\right)
\left(\frac{\rho(y)}{|x-y| \wedge 1} \wedge 1\right)
\sim \rho(x) \rho(y).
$$
One hence concludes that
\begin{align*}
J(x,y)&= \int_0^{(\rho(x)\wedge \rho(y))^2} J(x,y,t)\,dt
  + \int_{(\rho(x)\wedge \rho(y))^2}^{(\rho(x)\vee \rho(y))^2} J(x,y,t) \,dt \\
  & \quad+ \int_{(\rho(x)\vee \rho(y))^2}^{1} J(x,y,t) \,dt
      + \int_1^{4|x-y|^2} J(x,y,t) \,dt
      + \int_{4|x-y|^2}^\infty J(x,y,t) \,dt  \\
  & \lesssim \int_0^{(\rho(x)\wedge \rho(y))^2} t^{\frac{s}{2}-2} e^{-\frac{|x-y|^2}{ct}} \,dt
  + \int_{(\rho(x)\wedge \rho(y))^2}^{(\rho(x)\vee \rho(y))^2} t^{\frac{s}{2}-2}
  \frac{\rho(x) \wedge \rho(y)}{\sqrt t} e^{-\frac{|x-y|^2}{ct}} \,dt \\
  & \quad+ \int_{(\rho(x)\vee \rho(y))^2}^{1} t^{\frac{s}{2}-2}  \frac{\rho(x)}{\sqrt t} \frac{\rho(y)}{\sqrt t}  e^{-\frac{|x-y|^2}{ct}} \,dt
  + \int_1^{4|x-y|^2} t^{\frac{s}{2}-2} \rho(x) \rho(y)  e^{-\frac{|x-y|^2}{ct}} \,dt\\
  & \quad+ \int_{4|x-y|^2}^{\infty} t^{\frac{s}{2}-2} \rho(x) \rho(y)  \,dt \\
  &\lesssim \frac{(\rho(x)\wedge \rho(y))^2}{|x-y|^{4-s}} +
             \frac{\rho(x)\wedge \rho(y)(\rho(x)\vee \rho(y))^2}{|x-y|^{5-s}}
     + \frac{\rho(x)\rho(y)}{|x-y|^{6-s}}  + \frac{\rho(x) \rho(y)}{|x-y|^{2-s}} + \frac{\rho(x) \rho(y)}{|x-y|^{2-s}}  \\
 &\lesssim \frac{\rho(x) \rho(y)}{|x-y|^{2-s}}.
\end{align*}

Collecting the estimates above, we finish the proof of the proposition.
\end{proof}

Using the upper heat kernel estimate in higher dimensions (see Remark \ref{rem-heat-n3}),
the same argument as in Proposition \ref{riesz-potential-1} gives that

\begin{prop} \label{lem:riesz-poten}
Let $\Omega$ be an exterior $\cdini$ domain in $\rn$, $n \ge 3$.
 Suppose $0 < s < n$. Then the Riesz potential
$$
(-\Delta_{\Omega})^{-\frac{s}{2}}(x, y)
:=\frac{1}{\Gamma(s / 2)} \int_0^{\infty} t^{\frac{s}{2}} e^{t \Delta_{\Omega}}(x, y) \frac{d t}{t}
$$
satisfies
$$
(-\Delta_{\Omega})^{-\frac{s}{2}}(x, y)\lesssim \frac{1}{|x-y|^{n-s}}
\left(\frac{\rho(x)}
{|x-y| \wedge 1} \wedge 1\right)
\left(\frac{\rho(y)}{|x-y| \wedge 1} \wedge 1\right),
$$
uniformly for $x, y \in \Omega$.
\end{prop}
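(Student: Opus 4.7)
The plan is to proceed in complete analogy with the proof of Proposition \ref{riesz-potential-1}, but with a cleaner calculation since there are no logarithmic corrections coming from the volume density. Inserting the pointwise heat kernel upper bound from Remark \ref{rem-heat-n3} into the subordination formula
$$(-\Delta_{\Omega})^{-s/2}(x,y)=\frac{1}{\Gamma(s/2)}\int_0^\infty t^{s/2}p_\Omega(t,x,y)\,\frac{dt}{t},$$
one is reduced to estimating, up to an absolute constant, the single integral
$$J(x,y):=\int_0^\infty t^{\frac{s-n}{2}-1}\left(\frac{\rho(x)}{\sqrt t\wedge 1}\wedge 1\right)\left(\frac{\rho(y)}{\sqrt t\wedge 1}\wedge 1\right)e^{-\frac{|x-y|^2}{ct}}\,dt.$$
The goal is to prove $J(x,y)\lesssim |x-y|^{s-n}\,\bigl(\tfrac{\rho(x)}{|x-y|\wedge 1}\wedge 1\bigr)\bigl(\tfrac{\rho(y)}{|x-y|\wedge 1}\wedge 1\bigr)$.

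The next step is to carry out the same case split by relative sizes of $\rho(x)$, $\rho(y)$, $1$ and $|x-y|$. Specifically I would handle: (i) $\rho(x)\wedge\rho(y)\ge 1$, where both truncation factors equal $1$ and the bound reduces to the standard Euclidean estimate via splitting the integral at $t=|x-y|^2$ using $\int_0^{|x-y|^2}t^{\frac{s-n}{2}-1}e^{-|x-y|^2/(ct)}\,dt \lesssim |x-y|^{s-n}$; (ii) $\rho(x)\wedge\rho(y)<1\le\rho(x)\vee\rho(y)$, further subdivided according to whether $|x-y|<\rho(x)\wedge\rho(y)$ or not, splitting the time integration at the breakpoints $(\rho(x)\wedge\rho(y))^2$, $1$, and $|x-y|^2$; and (iii) $\rho(x)\vee\rho(y)<1$, split along $|x-y|<\rho(x)\wedge\rho(y)$ vs.\ $|x-y|\ge\rho(x)\wedge\rho(y)$, and in the latter further according to whether $|x-y|<1/2$ or $|x-y|\ge 1/2$. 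On each resulting subinterval the integrand is bounded by a product of an explicit power of $t$ (coming from the $(\rho/\sqrt t\wedge 1)$ factors) times the Gaussian, and each integral reduces either to $\int t^{\alpha-1}e^{-|x-y|^2/(ct)}\,dt\lesssim |x-y|^{2\alpha}$ for $\alpha<0$ or to an elementary power-integral on a finite interval.

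The book-keeping is routine since for $n\ge 3$ every power $t^{\frac{s-n}{2}-1+k/2}$ that appears has a strictly negative exponent at infinity whenever $0<s<n$, so both the endpoint contributions near $t=\infty$ and the Gaussian-controlled contributions near $t=0$ are integrable with the right polynomial rate. In particular, in each subcase the dominant term matches the claimed right-hand side, while the other terms are absorbed by it; e.g.\ in the regime $\rho(x)\vee\rho(y)<1$ with $|x-y|\ge 1/2$ the summands yield $\frac{(\rho(x)\wedge\rho(y))^2}{|x-y|^{4-s+(n-2)}}$, $\frac{\rho(x)\rho(y)}{|x-y|^{n-s}}$, etc., all of which are $\lesssim \frac{\rho(x)\rho(y)}{|x-y|^{n-s}}$.

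The main obstacle is purely organizational: keeping track of which factor of $(\rho/\sqrt t\wedge 1)\wedge 1$ is active on each subinterval and verifying that the worst of the resulting explicit bounds always agrees with the desired expression $|x-y|^{s-n}\bigl(\tfrac{\rho(x)}{|x-y|\wedge 1}\wedge 1\bigr)\bigl(\tfrac{\rho(y)}{|x-y|\wedge 1}\wedge 1\bigr)$. Because the volume factor in dimension $n\ge 3$ is simply $t^{n/2}$ without any logarithmic term, no delicate cancellation is needed and the argument reduces to the same routine integration carried out in Proposition \ref{riesz-potential-1}, with $2$ replaced by $n$ throughout and the logarithmic prefactors removed.
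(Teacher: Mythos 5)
Your proposal is correct and matches the paper exactly: the paper itself dispatches this proposition in one line by saying that inserting the upper bound of Remark~\ref{rem-heat-n3} into the subordination integral and running "the same argument as in Proposition~\ref{riesz-potential-1}" gives the claim, which is precisely what you carry out, with the only change being $t^{-n/2}$ in place of $t^{-1}$ and no logarithmic prefactor. Your remark about convergence at infinity is slightly loosely phrased (one needs the tail exponent $\frac{s-n}{2}-1 < -1$, i.e.\ $s<n$, not merely a negative exponent), but the organization and all the case splits are the intended ones.
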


\section{Fractional Riesz transforms}\label{s4}\hskip\parindent
As the heat kernel estimates on an exterior $\cdini$ domain has been established,
we can follow the approach in \cite{KVZ16} to prove the boundedness of
the fractional Riesz transforms.
By employing the estimates for Riesz potentials (see Propositions \ref{riesz-potential-1} and \ref{lem:riesz-poten})
and applying Schur's test to determine the boundedness of integral operators with sign-definite kernels,
Hardy's inequality is established for exterior domains concerning both the Euclidean Laplacian
and the Dirichlet Laplacian. Next, by using the half-space as a comparison domain, we obtain an estimate
for the difference between the heat kernels in the Euclidean space and in domains.
Finally, Theorem \ref{main-1} is proved through Littlewood-Paley equivalence and
an estimate of difference between the square functions.

\subsection{Hardy inequality}

\begin{lem} \label{lem:schur-test}
(Schur's test with weights).
Assume that $(X, d \mu)$ and $(Y, d \nu)$ are measure spaces and $w(x, y)$ is a positive measurable function
defined on $X \times Y$. Let $K(x, y):\ X \times Y\rightarrow \mathbb{C}$ satisfy
\begin{equation}\label{eq:assump1-in-Schur}
\sup _{x \in X} \int_Y w(x, y)^{\frac{1}{p}}|K(x, y)|\, d\nu(y)=C_0<\infty
\end{equation}
and
\begin{equation}\label{eq:assump2-in-Schur}
\sup _{y \in Y} \int_X w(x, y)^{-\frac{1}{p'}}|K(x, y)|\, d\mu(x)=C_1<\infty
\end{equation}
for some $1<p<\infty$. Then the operator defined by
$$
Tf(x)=\int_Y K(x, y) f(y)\,d\nu(y)
$$
is a bounded operator from $L^p(Y, d\nu)$ to $L^p(X, d \mu)$. In particular,
$$
\|T f\|_{L^p(X, d \mu)} \lesssim C_0^{\frac{1}{p}} C_1^{\frac{1}{p}}
\|f\|_{L^p(Y, d\nu)}
$$
with the implicit positive constant independent of $f$.
\end{lem}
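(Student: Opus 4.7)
The plan is a textbook application of H\"older's inequality followed by Fubini's theorem. The core idea is to factor the integrand $|K(x,y)f(y)|$ into two pieces whose $L^{p'}$- and $L^{p}$-norms in $y$ can be controlled, respectively, by the two hypotheses \eqref{eq:assump1-in-Schur} and \eqref{eq:assump2-in-Schur}. The key trick is to insert and then cancel a power of the mixed weight $w(x,y)$ so that both hypotheses appear naturally in the same estimate.

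Concretely, I would write the pointwise identity
\begin{align*}
|K(x,y)\,f(y)| \;=\; \bigl[|K(x,y)|\,w(x,y)^{1/p}\bigr]^{1/p'} \cdot \bigl[|K(x,y)|\,w(x,y)^{-1/p'}|f(y)|^{p}\bigr]^{1/p},
\end{align*}
noting that the two exponents of $w(x,y)$ are $\tfrac{1}{pp'}$ and $-\tfrac{1}{pp'}$ and hence cancel, while the exponents of $|K(x,y)|$ sum to $1$ and the exponent of $|f(y)|$ is $1$, so the identity is correct. Applying H\"older's inequality in $y$ with conjugate exponents $p'$ and $p$ yields
\begin{align*}
|Tf(x)|^{p} \;\le\; \biggl(\int_Y |K(x,y)|\,w(x,y)^{1/p}\,d\nu(y)\biggr)^{p/p'}\;\int_Y |K(x,y)|\,w(x,y)^{-1/p'}|f(y)|^{p}\,d\nu(y).
\end{align*}

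By hypothesis \eqref{eq:assump1-in-Schur} the first factor is bounded by $C_0^{p/p'}$ uniformly in $x\in X$. Integrating in $x$ against $d\mu$, using Fubini's theorem to exchange the order of integration in the remaining double integral, and applying \eqref{eq:assump2-in-Schur} to control the inner $x$-integral by $C_1$, I arrive at
\begin{align*}
\|Tf\|_{L^p(X,d\mu)}^{p} \;\le\; C_0^{p/p'}\,C_1\,\|f\|_{L^p(Y,d\nu)}^{p},
\end{align*}
which gives the claimed boundedness of $T$ (with the precise constant $C_0^{1/p'}C_1^{1/p}$, which is absorbed by $\lesssim$ in the statement). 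There is no genuine obstacle here: the only delicate point is the choice of the exponent $\pm \tfrac{1}{pp'}$ on $w(x,y)$ in the factorization, which is dictated by the requirement that the two factors, once raised to $p'$ and $p$ respectively, exactly reproduce the weighted integrands appearing in \eqref{eq:assump1-in-Schur} and \eqref{eq:assump2-in-Schur}.
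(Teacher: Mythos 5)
Your argument is correct and is the standard proof of the weighted Schur test (H\"older with conjugate exponents $p',p$ applied to the factorization $|Kf| = \bigl(|K|w^{1/p}\bigr)^{1/p'}\bigl(|K|w^{-1/p'}|f|^p\bigr)^{1/p}$, then Tonelli). The paper itself does not prove this lemma but simply refers to \cite[Lemma~5.1]{KVZ16}, and the argument you give is precisely the one used there. One small remark: your computation yields the constant $C_0^{1/p'}C_1^{1/p}$, which is the correct sharp form; the exponent $C_0^{1/p}$ in the paper's display appears to be a typographical slip, and you were right to flag that the $\lesssim$ absorbs the discrepancy.
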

\begin{proof}
  For the proof, we refer to \cite[Lemma 5.1]{KVZ16}.
\end{proof}

\begin{prop} \label{prop:HardyForWhole} (Hardy inequality for $\Delta $).
Let $\Omega \subset \mathbb{R}^n$ be an exterior Lipschitz domain,
$n \geq 2$. Assume $1<p<\infty$ and $0<s<n/p$.
Then there exists a positive constant $C$ such that, for any $f \in C_c^{\infty}(\Omega)$,
\begin{equation*}
\left\|\frac{f}{\rho^s}\right\|_{L^p(\Omega)}\le C\left\|(-\Delta )^{s/2}f\right\|_{L^p(\mathbb{R}^n)}.
\end{equation*}
\end{prop}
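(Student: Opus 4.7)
My plan is to follow the strategy used in \cite{KVZ16}: reduce the Hardy inequality to the $L^{p}(\rn)\to L^{p}(\Omega)$ boundedness of a positive integral operator, and then apply Schur's test (Lemma \ref{lem:schur-test}).

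First, for $f\in C_{c}^{\infty}(\Omega)$, extending by zero gives $f\in C_{c}^{\infty}(\rn)$, and the Riesz potential representation produces the pointwise bound
\[
|f(x)|\ls \int_{\rn}\frac{|(-\Delta)^{s/2}f(y)|}{|x-y|^{n-s}}\,dy.
\]
Consequently, it suffices to show that the integral operator with positive kernel
\[
K(x,y):=\frac{\chi_{\Omega}(x)}{\rho(x)^{s}\,|x-y|^{n-s}},\qquad (x,y)\in\rn\times\rn,
\]
is bounded from $L^{p}(\rn)$ into $L^{p}(\Omega)$.

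Next I would apply Lemma \ref{lem:schur-test} with a carefully chosen weight $w(x,y)$. Since $\Omega$ is an exterior Lipschitz domain, after translating we may assume that a fixed point $x_{0}\in\Omega^{c}$ equals the origin; then $\rho(x)\le|x|$, and $\rho(x)\sim|x|$ whenever $|x|\ge 2\diam(\Omega^{c})$. A natural starting point is the Stein--Weiss weight $w(x,y)=(|y|/|x|)^{\alpha}$: substituting $y=|x|z$ in the first Schur integral and $x=|y|z$ in the second reduces each to a finite constant provided $\alpha$ lies in a certain open interval, and this interval is non-empty precisely when $sp<n$. To absorb the extra factor $(|x|/\rho(x))^{s}$ one picks up near $\partial\Omega$, I would either refine the weight, using for instance $w(x,y)=((|x-y|+\rho(y))/(|x-y|+\rho(x)))^{\alpha}$ which exploits the $1$-Lipschitz property $|\rho(x)-\rho(y)|\le|x-y|$ to keep both Schur integrals finite, or cover the boundary layer by finitely many Lipschitz charts, straighten $\partial\Omega$ in each, reduce to the half-space fractional Hardy inequality, and patch via a partition of unity.

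The crux of the argument, and the place where the hypothesis $s<n/p$ is really used, lies in balancing the Schur weight: the factor $\rho(x)^{-s}$ is strongest near $\partial\Omega$ where $\rho$ degenerates, while the Riesz kernel $|x-y|^{-(n-s)}$ has its slowest decay at $|y|\to\infty$, and a single weight must simultaneously tame both behaviors. The condition $s<n/p$ is precisely the threshold that opens a non-empty range of admissible Schur exponents, thereby yielding boundedness of the operator and hence the Hardy inequality.
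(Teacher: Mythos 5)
Your reduction to a positive-kernel operator breaks down for the main part of the stated range. You replace the Riesz-potential representation $f = c_{n,s}\,I_s\bigl((-\Delta)^{s/2}f\bigr)$ by the pointwise bound $|f(x)|\lesssim\int_{\rn}|(-\Delta)^{s/2}f(y)|\,|x-y|^{-(n-s)}\,dy$ and then want to show that
\[
Tg(x):=\frac{\chi_\Omega(x)}{\rho(x)^s}\int_{\rn}\frac{g(y)}{|x-y|^{n-s}}\,dy
\]
is bounded from $L^p(\rn)$ to $L^p(\Omega)$. But this operator is \emph{unbounded} whenever $sp\ge 1$, a range that the hypothesis $s<n/p$ (with $n\ge2$) permits. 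To see this, set $R:=\diam(\Omega^c)$ and take $g:=\chi_{\{100R<|y|<200R\}}$ (normalized in $L^p$). For $x\in\Omega\cap B(0,2R)$ one has $|x-y|\sim|y|$ on the support of $g$, so $Tg(x)\gtrsim\rho(x)^{-s}$ with a constant independent of $x$; since $\partial\Omega$ is $(n-1)$-dimensional Lipschitz, $\int_{\Omega\cap B(0,2R)}\rho(x)^{-sp}\,dx=\infty$ once $sp\ge1$, and hence $\|Tg\|_{L^p(\Omega)}=\infty$. No choice of Schur weight can repair this, because the divergence is a property of the operator, not of the test. This is also why the sign-definite Schur method works in the paper's companion Proposition 4.4 for the \emph{Dirichlet} Riesz potential (whose kernel carries the damping factors $(\rho(x)/(|x-y|\wedge1)\wedge1)(\rho(y)/(|x-y|\wedge1)\wedge1)$) but cannot work here, where those factors are absent.

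The point you lose by taking absolute values is exactly the cancellation that makes the Hardy inequality true: since $f\in C_c^\infty(\Omega)$ vanishes outside $\Omega$, the Riesz potential of $(-\Delta)^{s/2}f$ has near-boundary cancellation not seen by the positive-kernel majorant. The paper's proof exploits this by splitting $f=\phi f+(1-\phi)f$ with a cutoff $\phi$ supported near the obstacle. For $\phi f$, which is compactly supported in a bounded Lipschitz domain $U:=\Omega\cap B(0,3\diam(\Omega^c))$, it invokes the fractional Hardy inequality on bounded Lipschitz domains from \cite{Tr01} (which does use the vanishing of $\phi f$ outside $U$), together with the fractional Leibniz rule and Sobolev embedding to control $\|(-\Delta)^{s/2}(\phi f)\|_{L^p(\rn)}$ by $\|(-\Delta)^{s/2}f\|_{L^p(\rn)}$. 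For $(1-\phi)f$, supported where $\rho\sim|x|$, it invokes the classical Hardy inequality on $\rn$ centered at the origin, which is where the restriction $s<n/p$ genuinely enters. Your second alternative (boundary straightening plus half-space fractional Hardy) could in principle avoid the failure above since it would retain cancellation, but it is a different and substantially more involved argument than the one you sketch, and you give no details.
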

\begin{proof}
By translation, we may assume that $0 \in \Omega^c$. We will use the abbreviation $\abdiam:=\diam(\Omega^c)$.
Let $\phi\in C_c^{\infty}(\mathbb{R}^n)$ be a smooth bump function
such that $\phi(x)= 1$ if $x \in B(0, 2\abdiam)$ and $\phi(x) = 0$ if
$x \in B(0, 3\abdiam)^c$. Notice that, for $x \in \supp(1-\phi)$,
$\rho(x) \sim |x|$.

We decompose $f= \phi f + (1-\phi) f$ and deal with the two parts separately.
We now estimate the first part. Observe that $\phi f$ is supported in the
bounded Lipschitz domain $U:=\Omega \cap B(0,3\abdiam)$.
Therefore, by the Hardy inequality for such domains (cf. \cite[Proposition 5.7]{Tr01})
\footnote{Although \cite[Proposition 5.7]{Tr01} is stated for a bounded smooth domain, a detailed investigation of
its proof reveals that a bounded Lipschitz domain suffices for $s>0$.}, we obtain
\begin{equation}\label{eq:fir-inHardy-whole}
\left\|\frac{\phi(x) f(x)}{\rho(x)^s}\right\|_{L^p(\Omega)} \lesssim\left\|\frac{\phi(x) f(x)}{\operatorname{dist}(x, \partial U)^s}\right\|_{L^p(U)} \lesssim\left\|\left(-\Delta \right)^{s / 2}(\phi f)\right\|_{L^p\left(\mathbb{R}^n\right)}.
\end{equation}
When $0<s<1$, applying the fractional product rule (cf. \cite{KVZ16}) and the Sobolev embedding theorem,
we have
\begin{align}\label{eq:4.1}
&\left\|(-\Delta )^{s/2}(\phi f)\right\|_{L^p(\mathbb{R}^n)} \\ \nonumber
&\ \lesssim\|\phi\|_{L^{\infty}(\mathbb{R}^n)}
\left\|(-\Delta )^{s/2} f\right\|_{L^p(\mathbb{R}^n)}
+\left\|(-\Delta )^{s/2} \phi\right\|_{L^{\frac{n}{s}}(\mathrm{R}^n)}
\|f\|_{L^{\frac{p n}{n-s p}}(\mathbb{R}^n)} \\ \nonumber
& \ \lesssim\left\|(-\Delta )^{s/2} f\right\|_{L^p(\mathbb{R}^n)}.
\end{align}
Combining this with \eqref{eq:fir-inHardy-whole} gives the desired bound
for $\phi f$ in the case $0<s<1$. When $s \geq 1$, let $s=k+\epsilon$ with
$k$ being an integer and $0 \leq \epsilon<1$. Then, using the boundedness of
the Riesz transform on $L^p(\mathbb{R}^n)$ and the regular (pointwise) product rule,
we have
$$
\left\|(-\Delta )^{s/2}(\phi f)\right\|_{L^p(\mathbb{R}^n)}
\lesssim \sum_{|\alpha|+|\beta|=k}
\left\|(-\Delta )^{\epsilon/2} [(\partial^\beta \phi)(\partial^\alpha f)]\right\|_{L^p(\mathbb{R}^n)}.
$$
This together with the fractional product rule, the Sobolev embedding theorem,
and the H\"older inequality in much the same manner as before further yields that
\begin{equation}\label{eq:4.2}
\left\|(-\Delta )^{s/2}(\phi f)\right\|_{L^p(\mathbb{R}^n)}
\lesssim
\left\|(-\Delta )^{s/2} f\right\|_{L^p(\mathbb{R}^n)}.
\end{equation}
Combining this with \eqref{eq:fir-inHardy-whole} provides the desired estimate
on $\phi f$ in the case $s\ge1$.

To estimate $[1-\phi] f$, we use the classical Hardy inequality on $\mathbb{R}^n$;
this requires $s<\frac{n}{p}$. Noting that $\rho(x) \sim |x|$ for any $x\in\supp(1-\phi)$, we have
\begin{align*}
\left\|\frac{[1-\phi(x)] f(x)}{\rho(x)^s}\right\|_{L^p(\Omega)}
& \lesssim \left\|\frac{[1-\phi(x)] f(x)}{|x|^s}\right\|_{L^p(\mathbb{R}^n)} \\
& \lesssim \left\|(-\Delta )^{s/2} ([1-\phi] f)\right\|_{L^p(\mathbb{R}^n)} \\
& \lesssim
\left\|(-\Delta )^{s/2} f\right\|_{L^p(\mathbb{R}^n)}
 +\left\|(-\Delta )^{s/2}(\phi f)\right\|_{L^p(\mathbb{R}^n)}.
\end{align*}
Combining this with the $L^p$ estimate on
$(-\Delta )^{s/2}(\phi f)$ obtained in \eqref{eq:4.1} and \eqref{eq:4.2}
completes the proof of the present lemma.
\end{proof}

\begin{prop} \label{prop:HardyForDomain}
Let $\Omega \subset \rn$ be an exterior $\cdini$ domain,  $n \ge 2$.
Assume $1<p<\infty$ and $0<s<\min\{n/p,1+1/p\}$.
Then there exists a constant $C>0$ such that, for any $f \in C_c^{\infty}(\Omega)$,
\begin{equation}\label{eq:4.3}
\left\|\frac{f}{\rho^s}\right\|_{L^p(\Omega)} \le
C\left\|(-\Delta_\Omega)^{s/2}f\right\|_{L^p(\Omega)}.
\end{equation}
\end{prop}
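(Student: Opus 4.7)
The plan is to recast \eqref{eq:4.3} as the $L^p$-boundedness of a positive integral operator and then apply Schur's test. Setting $g:=(-\Delta_\Omega)^{s/2}f$ so that $f=(-\Delta_\Omega)^{-s/2}g$, the desired inequality is equivalent to the assertion that the operator
\[
Tg(x):=\int_\Omega K(x,y)\,g(y)\,dy,\qquad K(x,y):=\frac{(-\Delta_\Omega)^{-s/2}(x,y)}{\rho(x)^s},
\]
is bounded on $L^p(\Omega)$. The kernel $K$ is positive, and its pointwise size is controlled by Proposition \ref{riesz-potential-1} in the planar case and by Proposition \ref{lem:riesz-poten} when $n\ge 3$. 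In dimension two the additional factor $\log(e+\rho(x)\wedge\rho(y))/\log(e+\rho(x)\vee\rho(y))$ is bounded above by $1$ and will be discarded at the outset, so that in all dimensions I will work with the unified upper bound
\[
K(x,y)\lesssim \frac{1}{\rho(x)^s\,|x-y|^{n-s}}\left(\frac{\rho(x)}{|x-y|\wedge 1}\wedge 1\right)\left(\frac{\rho(y)}{|x-y|\wedge 1}\wedge 1\right).
\]

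To verify $L^p$-boundedness I intend to invoke Lemma \ref{lem:schur-test} with the weight
\[
w(x,y):=\left(\frac{\rho(y)\wedge 1}{\rho(x)\wedge 1}\right)^{\theta}
\]
for a parameter $\theta\in(0,1)$ to be chosen. The two Schur conditions \eqref{eq:assump1-in-Schur} and \eqref{eq:assump2-in-Schur} then reduce to showing that the integrals
\[
I_1(x):=\int_\Omega \left(\frac{\rho(y)\wedge 1}{\rho(x)\wedge 1}\right)^{\theta/p}K(x,y)\,dy,\qquad I_2(y):=\int_\Omega \left(\frac{\rho(x)\wedge 1}{\rho(y)\wedge 1}\right)^{\theta/p'}K(x,y)\,dx
\]
are bounded uniformly in $x$ and $y$ respectively. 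Each of these will be estimated by splitting $\Omega$ into sub-regions according to the relative sizes of $\rho(x)$, $\rho(y)$, $|x-y|$, and $1$; on each piece the two $\wedge$'s in $K$ collapse into pure powers of $\rho(x)$, $\rho(y)$, and $|x-y|$, and one is left with an elementary Euclidean power integral.

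In this framework the two hypotheses on $s$ play transparent roles: the condition $s<n/p$ is precisely what makes $|x-y|^{-(n-s)}$ integrable once the boundary factors are accounted for, both near the diagonal and in the far-field region; the condition $s<1+1/p$ is exactly what guarantees the existence of an admissible $\theta\in(0,1)$ for which the boundary-normal integrations $\int_0^1\rho^{\,\cdot}\,d\rho$ arising in $I_1$ and $I_2$ are simultaneously finite (roughly, one needs $s-1-\theta/p>-1$ and $s-1+\theta/p'<0$, which can be arranged iff $s<1+1/p$). The main obstacle I anticipate is the bookkeeping required to carry out the case split uniformly: matching powers of $\rho$ and $|x-y|$ across sub-regions and verifying that a single choice of $\theta$ simultaneously controls both $I_1(x)$ and $I_2(y)$. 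Once these estimates are assembled, Schur's test delivers the $L^p$-boundedness of $T$, which is equivalent to \eqref{eq:4.3}.
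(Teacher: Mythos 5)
Your overall strategy---reduce \eqref{eq:4.3} to the $L^p$-boundedness of the positive integral operator with kernel $K(x,y)=\rho(x)^{-s}(-\Delta_\Omega)^{-s/2}(x,y)$, bound $K$ via Propositions \ref{riesz-potential-1} and \ref{lem:riesz-poten}, and then apply the weighted Schur test after a region decomposition---is exactly what the paper does. However, your single weight
$w(x,y)=\bigl((\rho(y)\wedge 1)/(\rho(x)\wedge 1)\bigr)^{\theta}$
cannot work, and the gap is not merely ``bookkeeping.''

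The fatal issue occurs in the far-field region where $\rho(x)\wedge\rho(y)\ge 1$ and $|x-y|>1$. There $\rho(x)\wedge 1=\rho(y)\wedge 1=1$, so your weight degenerates to $w\equiv 1$, and the kernel becomes $K(x,y)=\rho(x)^{-s}|x-y|^{s-n}$. With a trivial weight, the first Schur condition requires $\sup_x\int K(x,y)\,dy<\infty$, but
$\int_{|x-y|>1}|x-y|^{s-n}\,dy=\infty$
for every $s>0$, so the Schur integral diverges. The decay in $|x-y|$ that remains after accounting for the boundary factors is not enough; one genuinely needs a weight that exploits the decay of $\rho(x)^{-s}\sim|x|^{-s}$ against $|y|$. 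This is precisely why the paper uses the weight $w(x,y)=(|x|/|y|)^\alpha$ with $ps<\alpha<p'(n-s)\wedge pn$ in Region~IIa (and analogues in IIb--IIc), and that admissible interval being nonempty is exactly where the condition $s<n/p$ enters. Your weight has no $|x|$ or $|y|$ dependence and so cannot supply this decay.

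A secondary problem: near the boundary, for $1\le s<1+1/p$ your weight has the wrong sign. In Region~Ic (where $\rho(x)\le|x-y|\le\rho(y)<1$), substituting $\rho(y)\sim|x-y|$ into your $I_1$ forces $\theta/p+s<1$, i.e.\ $\theta<p(1-s)$, which is incompatible with $\theta>0$ once $s\ge1$. The paper instead uses $w(x,y)=(\rho(x)/|x-y|)^\alpha$ with $p(s-1)<\alpha<p'(2-s)$, and the nonemptiness of that interval is exactly equivalent to $s<1+1/p$. (In fact your sanity-check inequalities ``$s-1-\theta/p>-1$'' and ``$s-1+\theta/p'<0$'' only yield $s<1$, not $s<1+1/p$.) In short, a single weight of the proposed form cannot simultaneously handle the near-boundary regime (which needs $\rho(x)/|x-y|$) and the far-field regime (which needs $|x|/|y|$); you must vary the weight across the regions, as the paper does.
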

\begin{proof}
To prove \eqref{eq:4.3}, it suffices to show
\begin{equation}\label{eq:fir-inHardy-domain}
\left\|\frac{1}{\rho^s} (-\Delta_{\Omega})^{-\frac{s}{2}} g\right\|_{L^p(\Omega)}
\lesssim\|g\|_{L^p(\Omega)}, \quad \forall \,  g \in L^p(\Omega).
\end{equation}
Indeed, if \eqref{eq:fir-inHardy-domain} holds,
applying \eqref{eq:fir-inHardy-domain} to $g:=(-\Delta_{\Omega})^{\frac{s}{2}} f$
with $f \in C_c^{\infty}(\Omega)$, we see that \eqref{eq:4.3} holds.
We also remark that such $g:=(-\Delta_{\Omega})^{\frac{s}{2}} f$
with $f \in C_c^{\infty}(\Omega)$ do belong to $L^p(\Omega)$; see \cite[Theorem 4.3]{KVZ16}.

By Propositions \ref{riesz-potential-1} and \ref{lem:riesz-poten}, to prove
\eqref{eq:fir-inHardy-domain}, it suffices to show that the kernel
$$
K(x, y):=\frac{1}{\rho(x)^s} \frac{1}{|x-y|^{n-s}}\left(\frac{\rho(x)}{|x-y| \wedge 1} \wedge 1 \right)\left(\frac{\rho(y)}{|x-y| \wedge 1} \wedge 1\right)
$$
defines a bounded operator on $L^p(\Omega)$. To prove this conclusion,
we subdivide $\Omega \times \Omega$ into several regions,
and then applying Lemma \ref{lem:schur-test} with a suitably chosen weight,
we further obtain the $L^p$-boundedness of the operator defined by $K(x,y)$
in each of these regions. To begin, we subdivide $\Omega \times \Omega$ into two main regions: $|x-y| \leq 1$ and $|x-y|>1$.

{\it Region} $I$: $|x-y| \leq 1$. On this region, the kernel becomes
$$
K(x, y)=\frac{1}{\rho(x)^s} \frac{1}{|x-y|^{n-s}} \left(\frac{\rho(x)}{|x-y|} \wedge 1\right)\left(\frac{\rho(y)}{|x-y|} \wedge 1\right).
$$
To further analyze the kernel $K(x, y)$, we subdivide {\it Region} $I$ into four regions.

{\it Region} $Ia$: $|x-y| \leq \rho(x) \wedge \rho(y)$. In this case,
the kernel is just
$$
K(x, y)=\frac{1}{\rho(x)^s |x-y|^{n-s}}
$$
and we also have
$$
\rho(x) \leq \rho(y)+|x-y| \leq 2 \rho(y)
\quad \text { and } \quad
\rho(y) \leq \rho(x)+|x-y| \leq 2 \rho(x).
$$
An easy computation proves that
$$
\int_{|x-y| \leq \rho(x)} K(x, y) \, dy
  + \int_{|x-y| \leq \rho(y)} K(x, y) \, dx
\lesssim 1,
$$
and then the $L^p$-boundedness of the operator defined by $K(x,y)$
on this region follows immediately from Lemma \ref{lem:schur-test}.

{\it Region} $Ib$: $\rho(y) \leq |x-y| \leq \rho(x)$. On this region, the kernel has the form
$$
K(x, y) = \frac{\rho(y)}{\rho(x)^s |x-y|^{n+1-s}}.
$$
It is easy to show that
$$
\int_{\rho(y) \leq|x-y| \leq \rho(x)} K(x, y) \, dy
 \lesssim \frac{1}{\rho(x)^s} \int_{|x-y| \leq \rho(x)} \frac{1}{|x-y|^{n-s}} \, dy \lesssim 1
$$
and
$$
 \int_{\rho(y) \leq |x-y| \leq \rho(x)} K(x, y) \, dx
 \lesssim \rho(y) \int_{|x-y| \geq \rho(y)} \frac{1}{|x-y|^{n+1}} \, dx \lesssim 1 .
$$
Then, applying Lemma \ref{lem:schur-test} again, we obtain the $L^p$-boundedness
of the operator defined by $K(x,y)$ on this region.

{\it Region} $Ic$: $\rho(x) \leq |x-y| \leq \rho(y)$. On this region,
the kernel $K(x,y)$ becomes
$$
K(x, y) = \frac{\rho(x)^{1-s}}{|x-y|^{n+1-s}}
$$
and we also have
$$
|x-y| \leq \rho(y) \leq |x-y| + \rho(x) \leq 2|x-y|.
$$
To prove the $L^p$-boundedness of the operator defined by $K(x,y)$
on this region, we use Lemma \ref{lem:schur-test} with weight given by
$$
w(x, y) = \left(\frac{\rho(x)}{|x-y|}\right)^\alpha
\quad \text {with} \quad p(s-1)<\alpha<p'(2-s).
$$
The assumption $s<1+\frac{1}{p}$ guarantees the existence of such $\alpha$.
That hypothesis \eqref{eq:assump1-in-Schur} is satisfied in this case follows from
$$
\int_{\rho(x) \leq|x-y| \leq \rho(y)} w(x, y)^{\frac{1}{p}} K(x, y) \, dy
\lesssim \int_{|x-y| \geq \rho(x)} \frac{\rho(x)^{1-s+\frac{\alpha}{p}}}{|x-y|^{n+1-s+\frac{a}{p}}} \, dy \lesssim 1,
$$
while hypothesis \eqref{eq:assump2-in-Schur} is deduced from
\begin{align*}
& \int_{\rho(x) \leq|x-y| \leq \rho(y)} w(x, y)^{-\frac{1}{p'}} K(x, y) \, dx \\
& \ \lesssim \int_{\frac{1}{2} \rho(y) \leq |x-y| \leq \rho(y)}
\frac{\rho(x)^{1-s-\frac{\alpha}{p'}}} {|x-y|^{n+1-s-\frac{\alpha}{p'}}} \, dx \\
& \ \lesssim \rho(y)^{-(n+1-s-\frac{\alpha}{p'})}
\int_0^{\rho(y)} r^{1-s-\frac{\alpha}{p'}} \rho(y)^{n-1} \, dr \\
& \ \lesssim 1.
\end{align*}

{\it Region} $Id$: $\rho(x) \vee \rho(y) \leq|x-y|$. On this region,
the kernel $K(x,y)$ has the form
$$
K(x, y)=\frac{\rho(x)^{1-s} \rho(y)}{|x-y|^{n+2-s}}.
$$
To apply Lemma \ref{lem:schur-test} in this case, let
$$
w(x, y):=\left(\frac{\rho(x)}{|x-y|}\right)^\alpha
\quad \text{with} \quad p(s-1)<\alpha<p'(2-s).
$$
In this case, it is easy to find that
$$
\int_{\rho(x) \vee \rho(y) \leq |x-y|} w(x, y)^{\frac{1}{p}} K(x, y) \, dy
\lesssim
\int_{|x-y| \geq \rho(x)} \frac{\rho(x)^{1-s+\frac{\alpha}{p}}}
{|x-y|^{n+1-s+\frac{\alpha}{p}}} \, dy \lesssim 1,
$$
which implies that hypothesis \eqref{eq:assump1-in-Schur} holds in this setting.
Moreover, hypothesis \eqref{eq:assump2-in-Schur} in this setting follows from
\begin{align*}
& \int_{\rho(x) \vee \rho(y) \leq|x-y|} w(x, y)^{-\frac{1}{p'}} K(x, y) \, dx \\
& \ \lesssim \rho(y) \int_{\rho(x) \vee \rho(y) \leq |x-y|} \frac{\rho(x)^{1-s-\frac{\alpha}{p'}}} {|x-y|^{n+2-s-\frac{\alpha}{p'}}} \, dx \\
& \ \lesssim \rho(y) \sum_{R \geq \rho(y)} \frac{1}{R^{n+2-s-\frac{\alpha}{p'}}}
\int_0^{2R} r^{1-s-\frac{\alpha}{p'}} R^{n-1} \, dr \\
& \ \lesssim 1,
\end{align*}
where the sum in the second inequality is over $R \in 2^\mathbb{Z}$.

Next, we turn to the estimate on the second main region.

{\it Region} $II$: $|x-y|> 1$.
On this region, the kernel $K(x,y)$ is just
$$
K(x, y)=\frac{1}{\rho(x)^s} \frac{1}{|x-y|^{n-s}}
\left(\rho(x) \wedge 1\right)
\left(\rho(y) \wedge 1\right).
$$
Without loss of generality, we may assume that the spatial origin is the centroid
of $\Omega^c$. Furthermore, we subdivide {\it Region} $II$ into four subregions.

{\it Region} $IIa$: $1 \leq \rho(x) \wedge \rho(y)$. On this region, we have
$$
\rho(x) \sim |x| \quad \text{and} \quad \rho(y) \sim |y|,
$$
and hence
$$
K(x, y) = \frac{1}{\rho(x)^s |x-y|^{n-s}} \lesssim \frac{1}{|x|^s |x-y|^{n-s}}.
$$

To prove the $L^p$-boundedness of the operator defined by $K(x,y)$
on Region $IIa$, we apply Lemma \ref{lem:schur-test} with weight
$$
w(x, y):= \left(\frac{|x|}{|y|}\right)^\alpha \quad \text{with} \quad
ps < \alpha < p'(n-s) \wedge pn.
$$

To verify hypothesis \eqref{eq:assump1-in-Schur}, we estimate
\begin{equation} \label{eq:RegionIIa}
\int_{1 \leq \rho(x) \wedge \rho(y)} w(x, y)^{\frac{1}{p}} K(x, y) \, dy
\lesssim \int_{\mathbb{R}^n}
\frac{|x|^{\frac{\alpha}{p}-s}} {|x-y|^{n-s} |y|^{\frac{\alpha}{p}}} \, dy.
\end{equation}
Using the assumption $\frac{\alpha}{p}<n$ for $\alpha$, we find that
\begin{equation} \label{eq:RegionIIa1}
\int_{|y| \leq 2|x|}
\frac{|x|^{\frac{\alpha}{p}-s}} {|x-y|^{n-s} |y|^{\frac{\alpha}{p}}} \, dy
\lesssim \int_{|y| \leq 2|x|}
 \frac{\, dy}{|x-y|^{n-\frac{\alpha}{p}} |y|^{\frac{\alpha}{p}}}
  +\int_{|y| \leq 2|x|} \frac{\, dy}{|x-y|^{n-s}|y|^s} \lesssim 1,
\end{equation}
where to obtain the last inequality we consider separately the cases
$|y| \leq \frac{|x|}{2}$ and $\frac{|x|}{2} < |y| \leq 2|x|$;
in the former case we use $|x-y| \geq \frac{|x|}{2}$, while in the latter
we use $|x-y| \leq 3|x|$. Moreover, on the region
$\{y \in \mathbb{R}^n:\, |y| > 2|x| \}$, using the fact that $|x-y| \sim |y|$
in this case, we have
$$
\int_{|y| > 2|x|} \frac{|x|^{\frac{\alpha}{p}-s}} {|x-y|^{n-s} |y|^{\frac{\alpha}{p}}} \, dy
\lesssim |x|^{\frac{\alpha}{p}-s}
   \int_{|y|>2|x|} \frac{1}{|y|^{n-s+\frac{\alpha}{p}}} \, dy
   \lesssim 1.
$$
Combining this with \eqref{eq:RegionIIa1} and \eqref{eq:RegionIIa} yields that
hypothesis \eqref{eq:assump1-in-Schur} holds in this case.

Next, we verify hypothesis \eqref{eq:assump2-in-Schur} in this case. In this setting,
we have
\begin{equation} \label{eq:RegionIIa-2}
\int_{1 \leq \rho(x) \wedge \rho(y)} w(x, y)^{-\frac{1}{p'}} K(x, y) \, dx
\lesssim
\int_{\mathbb{R}^n} \frac{|y|^{\frac{\alpha}{p'}}} {|x|^{s+\frac{\alpha}{p'}} |x-y|^{x-s}} \, dx.
\end{equation}
Using the assumption that $s+\frac{\alpha}{p'}<n$ for $\alpha$, we conclude that
\begin{equation} \label{eq:RegionIIa-3}
\int_{|x| \leq 2|y|}
\frac{|y|^{\frac{\alpha}{p'}}} {|x|^{s+\frac{\alpha}{p'}} |x-y|^{n-s}} \, dx \lesssim
\int_{ |x| \leq 2|y|} \frac{\, dx} {|x|^s|x-y|^{n-s}} +
 \int_{|x| \leq 2|y|}\frac{\,dx}{|x|^{s+\frac{\alpha}{p'}}|x-y|^{n-s-\frac{\alpha}{p'}}} \lesssim 1,
\end{equation}
where to obtain the last inequality we consider separately the cases
$|x| \leq \frac{|y|}{2}$ and $\frac{|y|}{2}<|x| \leq 2|y|$. Moreover,
using the fact that $|x-y| \sim |x|$ on the region
$\{x \in \mathbb{R}^n:\,|x|>2|y|\}$, we obtain that
$$
\int_{|x|>2|y|}
\frac{|y|^{\frac{\alpha}{p'}}} {|x|^{s+\frac{\alpha}{p'}}|x-y|^{n-s}}\,dx
\lesssim |y|^{\frac{\alpha}{p'}}
\int_{|x|>2|y|} \frac{1}{|x|^{n+\frac{\alpha}{p'}}} \, dx
\lesssim 1,
$$
which, together with \eqref{eq:RegionIIa-2} and \eqref{eq:RegionIIa-3}, implies
that hypothesis \eqref{eq:assump2-in-Schur} holds in this case.

{\it Region} $IIb$: $\rho(y) \leq 1 \leq \rho(x)$.
On this region, $\rho(x) \sim |x|$, and hence
$$
K(x, y)=\frac{\rho(y)}{\rho(x)^s |x-y|^{n-s}} \lesssim \frac{1}{|x|^s|x-y|^{n-s}}.
$$
In this case, an argument similar to that used in the proof for Region IIa
shows the $L^p$-boundedness of the operator defined by $K(x,y)$ on Region IIb.

{\it Region} $IIc$: $\rho(x) \leq 1 \leq \rho(y)$. On this region, the kernel has the form
$$
K(x, y) = \frac{\rho(x)^{1-s}}{|x-y|^{n-s} }
$$
and we also have
$$
|x| \lesssim 1 \quad \text{and} \quad |y| \sim \rho(y).
$$
Furthermore, on Region II, $|x-y|> 1$, which implies that
$$
|y| \leq |x-y| + |x| \lesssim |x-y| + 1 \lesssim |x-y|.
$$
To obtain the $L^p$-boundedness of the operator defined by $K(x,y)$
on this region, we apply Lemma \ref{lem:schur-test} with weight defined by
$$
w(x, y):=\frac{ \rho(x)^{\alpha_2}}{|y|^\alpha},
$$
where $ps < \alpha = \alpha_1+\alpha_2 <p'(n-s)$, $\alpha_1<p$, and
$\alpha_2 < p'(2-s)$.

In this setting, it is easy to find that
\begin{align*}
 \int_{\rho(x) \leq 1 \leq \rho(y)} w(x, y)^{\frac{1}{p}} K(x, y) \,dy
& \lesssim \int_{\rho(x) \leq 1 \leq \rho(y)}
\frac{ \rho(x)^{1-s+\frac{\alpha_2}{p}}} {|y|^{\frac{\alpha}{p}}|y-x|^{n-s}} \, dy \\
& \lesssim  \rho(x)^{1-s+\frac{\alpha_2}{p}}
\int_{|y| \gtrsim \rho(x)} \frac{1}{|y|^{n-s+\frac{\alpha}{p}}} \, dy \\
& \lesssim  \rho(x)^{1-\frac{\alpha_1}{p}} \\
& \lesssim 1,
\end{align*}
which implies that hypothesis \eqref{eq:assump1-in-Schur} holds in this case.
Moreover, we also have
\begin{align*}
 \int_{\rho(x) \leq 1 \leq \rho(y)} w(x, y)^{-\frac{1}{p'}} K(x, y) \, dx
& \lesssim \int_{\rho(x) \leq 1} \frac{\rho(x)^{1-s-\frac{\alpha_2}{p'}}|y|^{\frac{\alpha}{p'}}}
     { |x-y|^{n-s}} \, dx \\
& \lesssim \int_{\rho(x) \leq 1} \frac{\rho(x)^{1-s-\frac{\alpha_2}{p'}}}{|x-y|^{n-s-\frac{\alpha}{p'}}} \, dx \\
& \lesssim \int_{\rho(x) \leq 1} \rho(x)^{1-s-\frac{\alpha_2}{p'}} \, dx \\
& \lesssim
\int_0^{1} r^{1-s-\frac{\alpha_2}{p'}} \, dr \\
& \lesssim 1,
\end{align*}
which implies that hypothesis \eqref{eq:assump2-in-Schur} holds in this setting.

{\it Region} $IId$: $\rho(x) \vee \rho(y) \leq 1$. On this region,
we have $|x| \lesssim 1$, $|y| \lesssim 1$, and
$$
1 < |x-y| \lesssim \rho(x)+\rho(y) + 1  \lesssim 1.
$$
Meanwhile, the kernel $K(x,y)$ has the form
$$
K(x, y) = \frac{\rho(x)^{1-s} \rho(y)} {|x-y|^{n-s}}
      \lesssim \rho(x)^{1-s}.
$$
To obtain the $L^p$-boundedness of the operator defined by $K(x,y)$
on Region IId, we apply Lemma \ref{lem:schur-test} with weight given by
$$
w(x, y):=\left(\frac{\rho(x)}{|y|}\right)^\alpha
\quad \text{with} \quad p(s-1) < \alpha < p'(2-s).
$$
In this setting, it is easy to see that
$$
\int_{\rho(x) \vee \rho(y) \leq 1} w(x, y)^{\frac{1}{p}} K(x, y) \, dy
 \lesssim \int_{|y| \lesssim 1} \frac{\rho(x)^{1-s+\frac{\alpha}{p}}}
 {|y|^{\frac{\alpha}{p}} } \, dy \lesssim 1,
$$
which yields that hypothesis \eqref{eq:assump1-in-Schur} holds in this case.
Moreover, we have
\begin{align*}
\int_{\rho(x) \vee \rho(y) \leq 1} w(x, y)^{-\frac{1}{p}} K(x, y) \, dx
& \lesssim \int_{\rho(x) \leq 1} |y|^{\frac{\alpha}{p'}} \rho(x)^{1-s-\frac{\alpha}{p'}} \, dx \\
& \lesssim\int_{\rho(x) \leq 1} \rho(x)^{1-s-\frac{\alpha}{p'}} \, dx \\
& \lesssim \int_0^{1} r^{1-s-\frac{\alpha}{p'}} \, dr \\
& \lesssim 1,
\end{align*}
which further implies that hypothesis \eqref{eq:assump2-in-Schur} holds in this case.

Putting everything above together, we conclude that
the operator defined by $K(x,y)$ is bounded on $L^p(\Omega)$.
This completes the proof of Proposition \ref{prop:HardyForDomain}.
\end{proof}

\subsection{The difference of heat kernels on $\rn$ and the domain}\hskip\parindent
We have the following upper bound of the difference $p(t,x,y)-p_\Omega(t,x,y)$.
\begin{thm}\label{heat-comparison}
Let $\Omega\subset \rn$ be an open and connected set with boundary $\partial \Omega$, $n\ge 2$. Then there exist $c,C>1$ such that,
for all $x,y\in \rn$ and all $t>0$,
it holds that
\begin{equation}\label{upper-difference}
0\le p(t,x,y)-p_\Omega(t,x,y)\le  Ct^{-n/2}e^{-\frac{|x-y|^2+\rho(x)^2+\rho(y)^2}{ct}}.
\end{equation}
\end{thm}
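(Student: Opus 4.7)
The plan is to split into the elementary case where one of $x,y$ lies in $\Omega^c$ and the main case $x,y\in\Omega$. If, say, $x\in\Omega^c$, then $p_\Omega(t,x,y)=0$ and $\rho(x)=0$; since $|x-y|\ge\rho(y)$, the upper bound reduces to the classical Gaussian bound $p(t,x,y)\le Ct^{-n/2}e^{-|x-y|^2/(ct)}$ after adjusting the constants. The lower bound $p\ge p_\Omega$ is the standard domain-monotonicity of the Dirichlet heat kernel: extending $p_\Omega(t,\cdot,y)$ by zero outside $\Omega$ yields a nonnegative subsolution of the free heat equation on $\mathbb{R}^n$ with the same initial datum, so the maximum principle gives $p_\Omega\le p$.

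For the main case $x,y\in\Omega$, I will use the probabilistic representation. Let $(B_s)_{s\ge 0}$ denote Brownian motion with $B_0=x$, set $\tau:=\inf\{s>0:B_s\notin\Omega\}$, and write $u(t,x,y):=p(t,x,y)-p_\Omega(t,x,y)$. Since $p_\Omega(r,\cdot,y)$ vanishes on $\partial\Omega$ for every $r>0$, the strong Markov property at the stopping time $\tau$ gives
\[
u(t,x,y)=\mathbb{E}^x\!\left[p(t-\tau,B_\tau,y)\,\mathbf{1}_{\{\tau<t\}}\right].
\]
I plan to combine this identity with the classical Gaussian upper bound $p(s,a,b)\le Cs^{-n/2}e^{-|a-b|^2/(cs)}$ and the reflection-principle estimate
\[
\mathbb{P}^x\!\left(\sup_{r\le s}|B_r-x|\ge R\right)\le Ce^{-R^2/(cs)},\qquad R,s>0,
\]
and then split the expectation into the regimes $\tau\in[0,t/2]$ and $\tau\in(t/2,t)$. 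On the first event $(t-\tau)^{-n/2}\lesssim t^{-n/2}$, so the integrand is bounded by $Ct^{-n/2}e^{-|B_\tau-y|^2/(ct)}$; then $|B_\tau-y|\ge\rho(y)$ (because $B_\tau\in\partial\Omega$) peels off $e^{-\rho(y)^2/(ct)}$, and a dichotomy on whether $|B_\tau-x|\le|x-y|/2$ (the triangle inequality gives $|B_\tau-y|\ge|x-y|/2$, and choosing $R=\rho(x)$ in the reflection-principle bound yields $\mathbb{P}^x(\tau\le t/2)\le Ce^{-\rho(x)^2/(ct)}$) or $|B_\tau-x|>|x-y|/2$ (the reflection-principle bound with $R=\max\{\rho(x),|x-y|/2\}$ supplies both the $\rho(x)^2$ and the $|x-y|^2$ factors at once) closes the first regime. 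The regime $\tau\in(t/2,t)$ is handled by the symmetry $u(t,x,y)=u(t,y,x)$: rewriting the representation with Brownian motion started at $y$ reduces it to a ``$\tilde\tau\le t/2$'' analysis already treated.

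The main obstacle is the simultaneous extraction of the three exponential decays $e^{-|x-y|^2/(ct)}$, $e^{-\rho(x)^2/(ct)}$, $e^{-\rho(y)^2/(ct)}$ from the single Gaussian $e^{-|B_\tau-y|^2/(c(t-\tau))}$ that the Gaussian upper bound for $p$ provides. It is managed by the two-layer case analysis above: the outer split on $\tau\lessgtr t/2$ controls the $(t-\tau)^{-n/2}$ prefactor cleanly (with the symmetry swap disposing of the late-exit regime), while the inner dichotomy on $|B_\tau-x|\lessgtr|x-y|/2$ ensures that either the triangle inequality or the maximal inequality for Brownian motion is available to feed the $|x-y|^2$ exponential alongside the $\rho(x)^2$ one, and the ambient bound $|B_\tau-y|\ge\rho(y)$ preserves the $\rho(y)^2$ decay throughout.
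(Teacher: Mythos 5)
Your proof takes a genuinely different route from the paper's. The paper argues deterministically: assuming WLOG $\rho(y)\le\rho(x)$, the case $|x-y|\gtrsim\rho(x)$ is trivial since then $\rho(x)^2+\rho(y)^2\lesssim|x-y|^2$; and when $|x-y|$ is small compared to $\rho(x)$, the paper inscribes a cube $Q_x\subset\Omega$ centered at $x$ and compares $p-p_\Omega\le p-p_{Q_x}\le\sum_i p(t,z_i,y)$ via reflections of $x$ across the $2n$ faces of $Q_x$ together with the maximum principle. Your proposal instead invokes Hunt's formula $u(t,x,y)=\mathbb{E}^x[p(t-\tau,B_\tau,y)\mathbf{1}_{\{\tau<t\}}]$, splits on $\tau\lessgtr t/2$, and inside the early-exit regime performs the dichotomy on $|B_\tau-x|\lessgtr|x-y|/2$ with the reflection principle supplying the $\rho(x)^2$ and $|x-y|^2$ factors and $|B_\tau-y|\ge\rho(y)$ supplying the third. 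That early-exit analysis is correct.

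The one gap is in the disposal of the late-exit regime. The symmetry $u(t,x,y)=u(t,y,x)$ by itself does not help: writing $u(t,y,x)=\mathbb{E}^y[p(t-\tilde\tau,\tilde B_{\tilde\tau},x)\mathbf{1}_{\{\tilde\tau<t\}}]$ and splitting again reproduces an equally problematic $\tilde\tau>t/2$ piece, so "rewriting the representation with Brownian motion started at $y$" does not simply reduce $\tau>t/2$ to a $\tilde\tau\le t/2$ analysis. What actually closes the argument is a time-reversal statement for the bridge, not the symmetry of $u$. Writing $u(t,x,y)=p(t,x,y)\,\mathbb{P}^{x,y}_t(\tau<t)$ in terms of the Brownian bridge measure, one observes that if the bridge first exits $\Omega$ at a time $\tau\ge t/2$, then the \emph{last} exit time is also $\ge t/2$, hence the time-reversed bridge (which has law $\mathbb{P}^{y,x}_t$) first exits at time $t-\tau_{\mathrm{last}}\le t/2$. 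This yields $\mathbb{P}^{x,y}_t(t/2\le\tau<t)\le\mathbb{P}^{y,x}_t(\tilde\tau\le t/2)$, so that
\begin{equation*}
u(t,x,y)\le\mathbb{E}^x\bigl[p(t-\tau,B_\tau,y)\mathbf{1}_{\{\tau<t/2\}}\bigr]+\mathbb{E}^y\bigl[p(t-\tilde\tau,\tilde B_{\tilde\tau},x)\mathbf{1}_{\{\tilde\tau\le t/2\}}\bigr],
\end{equation*}
and each term is bounded by your early-exit analysis. You should state this reversal step explicitly (via the last-exit time) rather than attributing it to the symmetry of $u$; as written, the claimed reduction would not be accepted. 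With that repair the argument is correct, though it is noticeably heavier than the paper's three-line cube-reflection trick, which avoids all the probabilistic machinery.
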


\begin{proof}
For the case $x$ or $y$ belongs to $\rn\setminus \Omega$,
it holds $p_\Omega(t,x,y)=0$.
Assume $x\in \rn\setminus \Omega$, which implies that $\rho(x)=0$ and
$$\rho(y)\le |x-y|.$$
Therefore, we have
\begin{equation*}\label{heat-diff-1}
p(t,x,y)-p_\Omega(t,x,y)=p(t,x,y)\le  Ct^{-n/2}e^{-\frac{|x-y|^2+\rho(x)^2+\rho(y)^2}{ct}}.
\end{equation*}

It remains to consider the case $x,y\in \Omega$.
Without loss of generality, we may suppose that $\rho(y)\le \rho(x)$.
If $|x-y|\ge \frac{1}{2 \sqrt n} \rho(x),$
then it holds that
$\rho(y)\le \rho(x)\le 2\sqrt n|x-y|,$
which implies that
\begin{equation*}\label{heat-diff-2}
0\le p(t,x,y)-p_\Omega(t,x,y)\le   Ct^{-n/2}e^{-\frac{|x-y|^2}{ct}}\le  Ct^{-n/2}e^{-\frac{|x-y|^2+\rho(x)^2+\rho(y)^2}{ct}}.
\end{equation*}

\begin{figure}[htbp]
\centering
\includegraphics[scale=0.25]{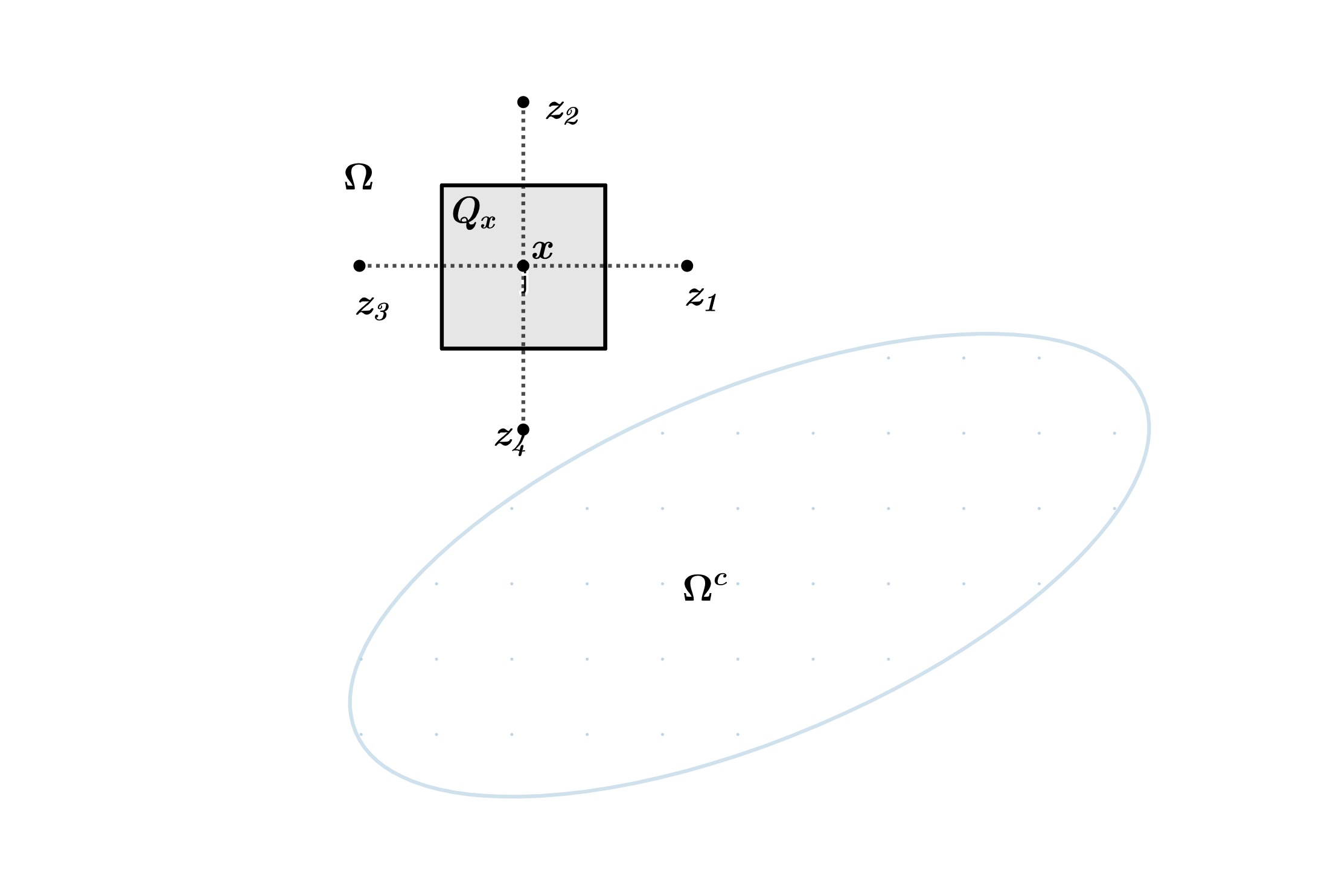}
\caption{Example of the cube in the two-dimensional case}
\label{fig:GjIj}
\end{figure}

If $|x-y|< \frac{1}{2 \sqrt n} \rho(x),$ let $Q_x$ be the cube with centre at
$x \in \Omega$ and side length $\frac{1}{\sqrt n} \rho(x)$.
Then it holds that $B(x, \frac{\rho(x)}{2\sqrt{n}} ) \subset Q_x$; see Picture \ref{fig:GjIj}.
The cube $Q_x$ has $2 n$ faces, and let $I$ be the collection of the $2n$ points that
are $\frac{1}{\sqrt n} \rho(x)$ away from $x$ though the centre of
those faces, which means that every $z_i \in I$ is the reflection of $x$
by one of the faces. Then the function
$$u(t,x,y) = p(t,x,y) - \sum_{z_i \in I} p(t,z_i,y)$$
satisfies the heat equation in $Q_x$
  with the same initial condition $\delta(x-y)$,
  since
$$\lim_{t \rightarrow 0} [p(t,x,y) - \sum_{z_i \in I} p(t,x,z_i)] = \delta(x-y) - \sum_{z_i \in I} \delta(y-z_i) = \delta(x-y).$$

Moreover, for all $t >0$, $u(t,x,y)$ is negative on the boundary of $Q_x$, since
$p(t,x,y) - p(t,z_i,y)$ is the heat kernel of the $p_{\mathbb{H}_{z_i}}$, where $\mathbb{H}_{z_i}$ is the half-space
containing $x$ and its boundary containing the face between $x$ and $z_i$. Hence, $p(t,x,y) - p(t,z_i,y) = 0$ in this face.

Via the maximal principle, for any $y \in Q_x$, it holds that
\[
p_{Q_x}(t,x,y) \ge u(t,x,y) = p(t,x,y) - \sum_{z_i \in I} p(t,z_i,y).
\]
If $y \in Q_x$, then it holds that
\[
p(t,x,y)-p_{\Omega}(t,x,y)\leq p(t,x,y) - p_{Q_x}(t,x,y)
\leq \sum_{z_i \in I} p(t,z_i,y).
\]
Moreover, as $\frac{1}{2\sqrt n}\rho(x) \leq |x- z_i|$, we see that
\[
p(t,x,y)-p_{\Omega}(t,x,y)\leq Ct^{-n/2} e^{-\frac{\rho(x)^2}{ct}},
\]
which together with  $|x-y|< \frac{1}{2 \sqrt n} \rho(x)$ and $\rho(y) \leq \rho(x)$ yields that
\[
p(t,x,y)-p_{\Omega}(t,x,y)\leq C t^{-n/2} e^{-\frac{|x-y|^2 + \rho(x)^2 + \rho(y)^2}{ct}}.
\]
This completes the proof.
\end{proof}

\subsection{Main estimates}

\begin{thm}\label{littlewood-paley}
Suppose $\Omega \subset \mathbb{R}^n$ is a domain, $n \ge 2$. Assume $1< p < \infty$, $s \ge 0$, and $k \ge 1$ is an integer satisfying $2k >s$.
For any $N\in(0,\infty)$, let $\tilde{P}_N := e^{\Delta/N^2} - e^{4 \Delta/N^2}$
and $\tilde{P}^\Omega_N := e^{\Delta_\Omega/N^2} - e^{4 \Delta_\Omega/N^2}$.
The following statements are valid.
	
(i) For any $f\in C^\infty_c(\rn)$, it holds that
\begin{eqnarray*}
\left\|(-\Delta)^{s/2}f\right\|_{L^p(\rn)}\sim \left\|\left(\sum_{N \in 2^{\mathbb{Z}}} N^{2s}
\left|\left(\tilde{P}_N \right)^k f\right|^2\right)^{1/2}\right\|_{L^p(\rn)}.
\end{eqnarray*}

(ii) For any $f \in C^\infty_c(\Omega)$, it holds that
\begin{eqnarray*}
\left\|(-\Delta_\Omega)^{s/2} f\right\|_{L^p(\Omega)}\sim \left\|\left(\sum_{N \in 2^{\mathbb{Z}}} N^{2s}
\left| \left(\tilde{P}^\Omega_N \right)^k f\right|^2\right)^{1/2}\right\|_{L^p(\Omega)}.
\end{eqnarray*}
\end{thm}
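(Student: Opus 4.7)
Throughout write $L=-\Delta$ (on $\rn$) or $L=-\Delta_\Omega$ (on $\Omega$), and set $\phi(\lambda):=e^{-\lambda}-e^{-4\lambda}$, so that $\tilde P_N=\phi(L/N^2)$ and $(\tilde P_N)^k=\phi^k(L/N^2)$. Note that $\phi$ is smooth on $[0,\infty)$, vanishes linearly at the origin and decays exponentially at infinity; hence $\phi^k$ vanishes like $\lambda^k$ at $0$, and under the hypothesis $2k>s$ the function $\lambda^{-s/2}\phi^k(\lambda)$ is a Schwartz ``Littlewood--Paley building block''. Since both $L$ are nonnegative self-adjoint on $L^2$, the plan is to prove (i) and (ii) in parallel via the spectral theorem, a discrete Calder\'on reproducing identity, and a spectral-multiplier theorem tailored to each operator.

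\textbf{Step 1 (Calder\'on reproducing identity).} A pointwise spectral computation gives
\[
\sum_{N\in 2^{\mathbb Z}} N^{2s}\,\phi(\lambda/N^2)^{2k}\ \sim\ \lambda^{s},\qquad \forall\,\lambda>0,
\]
obtained via the substitution $t=\lambda/N^2$: the left-hand side equals $\lambda^{s}\sum_{j\in\zz} g(4^{-j}\lambda)$ with $g(t)=t^{-s}\phi(t)^{2k}$, and $g$ is bounded and integrable with respect to $dt/t$ thanks to $2k>s$, so the sum over the geometric progression is pinched between two positive constants independent of $\lambda$. Feeding this into the spectral theorem yields the $L^{2}$ equivalence for both $L$.

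\textbf{Step 2 (randomization and multiplier formulation).} For general $1<p<\infty$, Khintchine's inequality applied pointwise reduces the square function to a randomized sum: writing $\{\varepsilon_N\}_{N\in 2^{\mathbb Z}}$ for a Rademacher sequence and $\mathbb E$ for expectation,
\[
\left\|\bigg(\sum_{N\in 2^{\mathbb Z}} N^{2s}\lvert \phi^k(L/N^2)f\rvert^{2}\bigg)^{1/2}\right\|_{L^{p}}
\sim\ \Big(\mathbb E \bigl\|m_{\varepsilon}(L)f\bigr\|_{L^{p}}^{p}\Big)^{1/p},
\]
where $m_{\varepsilon}(\lambda):=\sum_{N\in 2^{\mathbb Z}}\varepsilon_N\,N^{s}\,\phi(\lambda/N^2)^{k}$. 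Using $L^{s/2}f=m_{\varepsilon}(L)^{-1}L^{s/2}f$ formally, the desired equivalence $\|Sf\|_{L^p}\sim\|L^{s/2}f\|_{L^p}$ will follow once we show that both $m_{\varepsilon}(L)$ and $\tilde m_{\varepsilon}(L):=L^{-s/2}m_{\varepsilon}(L)$ are bounded on $L^{p}$ uniformly in $\varepsilon$; note that $\tilde m_{\varepsilon}(\lambda)=\sum_{N}\varepsilon_N h(\lambda/N^2)$ with the single fixed Schwartz profile $h(u):=u^{-s/2}\phi(u)^{k}$, so $\tilde m_{\varepsilon}$ is of genuine Mikhlin--H\"ormander type (dyadic sum of a fixed bump).

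\textbf{Step 3 ($L^{p}$-boundedness of the multipliers).} On $\rn$ this is the classical Mikhlin multiplier theorem, since $m_\varepsilon$ and $\tilde m_\varepsilon$ are radial Fourier multipliers whose dyadic derivatives $|\xi|^{\alpha}|\partial^{\alpha}m_\varepsilon(\xi)|$ are uniformly bounded in $\varepsilon$. On $\Omega$ the same conclusion follows from the spectral-multiplier theorem of Blunck--Kunstmann / Duong--Ouhabaz / Auscher--Duong--McIntosh type, whose sole hypothesis is a Gaussian upper bound for the heat semigroup of $L_\Omega$; this Gaussian upper bound is guaranteed either by the trivial domain-monotonicity estimate $0\le p_\Omega(t,x,y)\le p(t,x,y)$ or by Theorem \ref{cor:upper-bound}. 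Combining the two multiplier bounds with Step 2 yields (i) and (ii).

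\textbf{Main obstacle.} The delicate point is the uniformity in the random signs $\varepsilon$ in Step 3: once one can quote a Mikhlin--H\"ormander spectral multiplier theorem on $\Omega$ requiring only finitely many derivative bounds on the symbol (with the corresponding constants independent of $\varepsilon$), the argument is mechanical. This is exactly the situation treated in \cite{KVZ16}, and in view of the Gaussian upper bound of Theorem \ref{cor:upper-bound}, their proof transfers verbatim to our setting; the planar analogue causes no additional issues since only Gaussian heat-kernel bounds (not the sharp boundary decay) are used in Steps 2--3.
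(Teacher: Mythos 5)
The paper does not give its own proof of this theorem: the displayed proof is the single line ``For the proof, we refer to [KVZ16, Theorem 4.3].'' Your outline is, in spirit, exactly the argument behind that reference, and you say so yourself at the end, so you and the paper are taking the same route.

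Two points in your write-up should be tightened. First, in Step 2 you claim the equivalence ``will follow once we show that both $m_\varepsilon(L)$ and $\tilde m_\varepsilon(L)$ are bounded on $L^p$ uniformly in $\varepsilon$.'' The symbol $m_\varepsilon(\lambda)=\sum_N\varepsilon_N N^s\phi(\lambda/N^2)^k$ behaves like $\lambda^{s/2}$ for both $\lambda\to0$ and $\lambda\to\infty$ (the dominant terms always come from $N\sim\sqrt\lambda$), so $m_\varepsilon(L)$ is never $L^p$-bounded when $s>0$; what you actually need, for both directions of the equivalence (via the Calder\'on identity $\sum_N a_N(\lambda)^2\sim1$ with $a_N(\lambda)=N^s\lambda^{-s/2}\phi(\lambda/N^2)^k$ together with a duality argument), is only the uniform $L^p$-boundedness of $\tilde m_\varepsilon(L)$. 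The auxiliary identity ``$L^{s/2}f=m_\varepsilon(L)^{-1}L^{s/2}f$'' as written is a tautology and doesn't carry the intended content; the correct step is $L^{s/2}f=W(L)^{-1}\sum_N a_N(L)\,N^s(\tilde P_N)^k f$ with $W(\lambda)=\sum_N a_N(\lambda)^2\sim1$, followed by duality and Khintchine. Second, in Step 3 you invoke spectral-multiplier theorems of Blunck--Kunstmann/Duong--Ouhabaz type on $\Omega$; these are formulated on spaces of homogeneous type and require a doubling measure, which a general domain $\Omega$ (the theorem is stated for an arbitrary domain) does not supply. The way around this, and what makes the argument genuinely domain-free, is to use only the trivial domination $0\le p_\Omega(t,x,y)\le p(t,x,y)$ together with Davies' time-derivative bound, extend all kernels by zero to $\rn\times\rn$, and run vector-valued Calder\'on--Zygmund theory on $\rn$ rather than a multiplier theorem on $\Omega$. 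With those two corrections your plan is sound and tracks the cited argument.
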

\begin{proof}
For the proof, we refer to \cite[Theorem 4.3]{KVZ16}.
\end{proof}

\begin{lem} \label{lem:LP-kernel-upperbound}
Let $\Omega \subset \mathbb{R}^n$ be an exterior Lipschitz domain,
$n \ge 2$, and $N\in(0,\infty)$. For an integer $k \geq 1$, let $K_N^k(x, y):=[(\tilde{P}_N)^k - (\tilde{P}_N^{\Omega})^k](x, y)$.
Then there exists a constant $c=c(k)>0$ such that
\begin{align} \label{eq:kernel-bound}
|K_N^k(x, y)|\lesssim_k N^n e^{-c N^2[\rho(x)^2 + \rho(y)^2 + |x-y|^2]}
\end{align}
uniformly for any $x \in \mathbb{R}^n$ and $y \in \Omega$, where the implicit positive constant depends on $k$.
\end{lem}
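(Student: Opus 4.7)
The natural starting point is the telescoping identity
\[
(\tilde P_N)^k - (\tilde P_N^\Omega)^k = \sum_{j=0}^{k-1} (\tilde P_N)^j \bigl(\tilde P_N - \tilde P_N^\Omega\bigr) (\tilde P_N^\Omega)^{k-1-j},
\]
so that $K_N^k(x,y)$ becomes a finite sum of kernels of the form $(\tilde P_N)^j (\tilde P_N - \tilde P_N^\Omega) (\tilde P_N^\Omega)^{k-1-j}$. The middle factor $\tilde P_N - \tilde P_N^\Omega = [e^{\Delta/N^2} - e^{\Delta_\Omega/N^2}] - [e^{4\Delta/N^2} - e^{4\Delta_\Omega/N^2}]$ is where the boundary decay enters: applying Theorem~\ref{heat-comparison} at $t = 1/N^2$ and $t = 4/N^2$ gives
\[
|(\tilde P_N - \tilde P_N^\Omega)(z_1, z_2)| \lesssim N^n \exp\!\bigl(-cN^2[|z_1-z_2|^2 + \rho(z_1)^2 + \rho(z_2)^2]\bigr).
\]
For the outer factors, each block $\tilde P_N$ and $\tilde P_N^\Omega$ has a pointwise Gaussian envelope $C N^n e^{-cN^2|\cdot-\cdot|^2}$ (from the standard Gaussian bound on $p$ together with the domination $p_\Omega \le p$), and this envelope is preserved, with a possibly smaller $c$, under iterated convolution.

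Writing the kernel of each summand as a two-fold integral in auxiliary variables $z_1, z_2$ and inserting the three bounds above leads to an integrand controlled by $C N^{3n}\exp(-cN^2 \Phi)$, where
\[
\Phi := |x-z_1|^2 + |z_1-z_2|^2 + |z_2-y|^2 + \rho(z_1)^2 + \rho(z_2)^2.
\]
The elementary inequalities $|x-y|^2 \le 3(|x-z_1|^2 + |z_1-z_2|^2 + |z_2-y|^2)$, $\rho(x)^2 \le 2(|x-z_1|^2 + \rho(z_1)^2)$ and $\rho(y)^2 \le 2(|z_2-y|^2 + \rho(z_2)^2)$ give $|x-y|^2 + \rho(x)^2 + \rho(y)^2 \le C\Phi$. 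Splitting $e^{-cN^2\Phi} = e^{-(c/2)N^2\Phi}\cdot e^{-(c/2)N^2\Phi}$, the first half extracts the desired factor $e^{-c'N^2[|x-y|^2 + \rho(x)^2 + \rho(y)^2]}$ with $c' = c/(2C)$, and the second half retains enough Gaussian decay in $(z_1,z_2)$ that
\[
\int\!\!\int e^{-(c/2)N^2(|x-z_1|^2 + |z_1-z_2|^2 + |z_2-y|^2)}\, dz_1\, dz_2 \lesssim N^{-2n}.
\]
Combining with the $N^{3n}$ prefactor yields exactly $N^n e^{-c'N^2[|x-y|^2 + \rho(x)^2 + \rho(y)^2]}$, as required. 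For $k=1$ the expansion has a single term and no auxiliary integrations, so the bound follows directly from Theorem~\ref{heat-comparison}. The degenerate case $x \in \mathbb{R}^n\setminus\Omega$, where $(\tilde P_N^\Omega)^k(x,y) = 0$, $\rho(x) = 0$ and $\rho(y) \le |x-y|$, reduces at once to the Gaussian bound on $(\tilde P_N)^k(x,y)$.

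\textbf{Main obstacle.} The only mildly technical step is justifying the pointwise Gaussian envelope for the iterates $(\tilde P_N)^j$ and $(\tilde P_N^\Omega)^{k-1-j}$; this comes down to iterating the inequality $(N^n e^{-cN^2|\cdot|^2}) \ast (N^n e^{-cN^2|\cdot|^2}) \lesssim N^n e^{-(c/2)N^2|\cdot|^2}$ a fixed number of times depending on $k$, which produces the $k$-dependent shrinkage of the exponential rate reflected in the final constant $c=c(k)$. Once this convolution bookkeeping is arranged, the remaining argument is a routine Gaussian manipulation with Theorem~\ref{heat-comparison} as the sole nontrivial input.
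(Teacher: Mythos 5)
Your proposal is correct and follows essentially the same approach as the paper: telescoping the difference $(\tilde P_N)^k - (\tilde P_N^\Omega)^k$, bounding the two outer blocks by pure Gaussian envelopes, bounding the middle factor by Theorem \ref{heat-comparison} applied at $t = 1/N^2$ and $t = 4/N^2$, then integrating the auxiliary variables and using the $1$-Lipschitz property of $\rho$ to pull the $\rho(x), \rho(y), |x-y|$ decay outside the integral. The only cosmetic difference is that the paper splits the auxiliary integration over $\mathbb{R}^n \times \Omega^c$ and $\mathbb{R}^n \times \Omega$ before estimating (corresponding to whether the middle factor's right argument lies outside or inside $\Omega$), whereas you invoke Theorem \ref{heat-comparison} uniformly, which is valid since that theorem is stated for all $x,y \in \mathbb{R}^n$ and already covers the case $\rho = 0$.
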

\begin{proof}
 We first write
\begin{align} \label{eq:factor}
K_N^k(x, y)
& =\left[e^{\Delta/N^2} - e^{4\Delta/N^2}\right]^k(x, y)-
   \left[e^{\Delta_{\Omega}/N^2} - e^{4\Delta_{\Omega}/N^2}\right]^k(x, y)\\   \notag
& =\left\{ \sum_{\ell=0}^{k-1} \left[e^{\Delta/N^2} - e^{4\Delta/N^2}\right]^{\ell}
 \left[e^{\Delta/N^2} - e^{\Delta_{\Omega}/N^2} - e^{4\Delta/N^2}
      + e^{4\Delta_{\Omega}/N^2}\right] \right.\\ \notag
& \quad\times \left[e^{\Delta_{\Omega}/N^2} -
        e^{4\Delta_{\Omega}/N^2}\right]^{k-\ell-1}\Bigg\}(x, y).
\end{align}
To proceed, we will estimate the kernels of each of the three factors appearing
in the right-hand of \eqref{eq:factor}.

Using the maximal principle, a simple application of Gaussian integrals yields that
\begin{equation} \label{eq:fir-thrid}
\sup_{0 \leq \ell \leq k-1}
\left|\left[e^{\Delta_{\Omega}/N^2} - e^{4\Delta_\Omega/N^2}\right]^{\ell}(x, y)\right|
+\left|\left[e^{\Delta/N^2} - e^{4\Delta/N^2}\right]^{\ell}(x, y)\right|\\ \notag
\lesssim_k N^n e^{-c_1 N^2|x-y|^2},
\end{equation}
for some $c_1=c_1(k)>0$.

We turn now to estimating the kernel of $e^{t\Delta} - e^{t\Delta_\Omega}$.
When $y \notin \Omega$, we have
\begin{equation} \label{eq:comp-ninOmega}
0 \leq [e^{t\Delta} - e^{t\Delta_\Omega}](x, y)
= e^{t \Delta}(x, y) \lesssim t^{-n/2} e^{-\frac{|x-y|^2}{4t}}.
\end{equation}
Moreover, when $y \in \Omega$, applying Theorem \ref{heat-comparison}, we have
\begin{equation} \label{eq:comp-inOmega}
0\le  [e^{t\Delta} - e^{t\Delta_\Omega}](x, y) \le
Ct^{-n/2}e^{-\frac{|x-y|^2+\rho(x)^2+\rho(y)^2}{ct}}.
\end{equation}
Combining \eqref{eq:factor}-\eqref{eq:comp-inOmega}, for $x \in \mathbb{R}^n$
and $y \in \Omega$, we conclude that
\begin{align} \label{eq:lp-com-kern}
|K_N^k(x, y)|
& \lesssim  \iint_{\mathbb{R}^n \times \Omega^c} N^{3n}
e^{-c_2 N^2 |x-x'|^2 - c_2 N^2 |x' - y'|^2 - c_2 N^2|y-y'|^2} \, dx' \, dy' \\ \notag
&\quad + \iint_{\mathbb{R}^n \times \Omega} N^{3n}
e^{-c_2 N^2 |x-x'|^2 - c_2 N^2[\rho(x')^2 + \rho(y')^2 + |x'-y'|^2]
- c_2 N^2  |y'-y|^2} \, dx' \, dy',
\end{align}
for some $0 < c_2 \leq \min\{c_1, \frac{1}{100}\}$.

We now estimate the first integral appearing in \eqref{eq:lp-com-kern}.
As $y \in \Omega$, it holds that $|y-y'| \geq \rho(y)$.
Also,
$$
\rho(x) \leq |x-x'| + |x'-y'| + |y'-y| + \rho(y).
$$
Thus, we can bound the first integral in \eqref{eq:lp-com-kern} by
\begin{align} \label{eq:lp-com-kern-1}
& N^{3n} e^{-c_3 N^2 [\rho(x)^2 + \rho(y)^2]} \iint_{\mathbb{R}^n \times \Omega^c}
e^{-c_3 N^2(|x-x'|^2 + |x'-y'|^2 + |y'-y|^2)} \, dx' \, dy' \\ \notag
&\quad \lesssim N^n e^{-c N^2 [\rho(x)^2 + \rho(y)^2 + |x-y|^2]}.
\end{align}
To estimate the second integral in \eqref{eq:lp-com-kern}, we argue similarly and use $\rho(x) \leq \rho(x') + |x-x'|$ and $\rho(y) \leq \rho(y') + |y-y'|$ to obtain the bound
$$
\iint_{\mathbb{R}^n \times \Omega}
 N^{3n} e^{-c_2 N^2 |x-x'|^2 - c_2 N^2 [\rho(x)^2 + \rho(y')^2 + |x'-y'|^2]
 - c_2 N^2|y-y'|^2} \, dx' \, dy'
 \lesssim N^n e^{-c N^2 [\rho(x)^2 + \rho(y)^2 + |x-y|^2]}.
$$
This, combined with \eqref{eq:lp-com-kern} and \eqref{eq:lp-com-kern-1},
further implies that the estimate \eqref{eq:kernel-bound} holds.
This completes the proof of the lemma.
\end{proof}

\begin{prop}\label{prop:compare}
Let $\Omega \subset  \mathbb{R}^n$ be an exterior Lipschitz domain, $n \ge 2$.
Assume $1<p<\infty$ and $s>0$.
Then for any $f \in C_c^{\infty}(\Omega)$, we have
\begin{align}\label{eq:lp-hardy}
\left\|\left(\sum_{N \in 2^\mathbb{Z}} N^{2s}
   \left|(\tilde{P}_N)^k f\right|^2\right)^{\frac{1}{2}}
- \left(\sum_{N \in 2^\mathbb{Z}} N^{2s}
     \left|(\tilde{P}_N^{\Omega})^k f\right|^2\right)^{\frac{1}{2}}\right\|
     _{L^p(\mathbb{R}^n)}
\lesssim_k \left\|\frac{f}{\rho^s}\right\|_{L^p(\Omega)}
\end{align}
for any integer $k \geq 1$, where the implicit positive constant depends on $k$.
\end{prop}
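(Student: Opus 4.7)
The plan is to follow the approach of \cite{KVZ16}. First, I would apply the pointwise $\ell^2$ triangle inequality $\bigl|\|a\|_{\ell^2} - \|b\|_{\ell^2}\bigr| \leq \|a-b\|_{\ell^2}$, with $a_N = N^s (\tilde P_N)^k f(x)$ and $b_N = N^s (\tilde P_N^\Omega)^k f(x)$, to reduce matters to proving
$$\biggl\|\biggl(\sum_{N \in 2^{\mathbb{Z}}} N^{2s}\bigl|K_N^k f\bigr|^2\biggr)^{1/2}\biggr\|_{L^p(\mathbb{R}^n)} \lesssim \biggl\|\frac{f}{\rho^s}\biggr\|_{L^p(\Omega)},$$
where $K_N^k := (\tilde{P}_N)^k - (\tilde{P}_N^\Omega)^k$, whose integral kernel I also denote $K_N^k(x,y)$. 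The Gaussian pointwise kernel estimate from Lemma~\ref{lem:LP-kernel-upperbound}, namely $|K_N^k(x,y)| \lesssim_k N^n e^{-cN^2[\rho(x)^2 + \rho(y)^2 + |x-y|^2]}$, is the essential input.

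Next, I would apply the Cauchy--Schwarz inequality to $|K_N^k f(x)|^2$, exploiting the trivial mass bound $\int |K_N^k(x,y)|\,dy \lesssim e^{-cN^2\rho(x)^2}$ (obtained by absorbing $e^{-cN^2|x-y|^2}$ into the Gaussian volume $N^{-n}$), to get $|K_N^k f(x)|^2 \lesssim e^{-cN^2\rho(x)^2} \int |K_N^k(x,y)|\,|f(y)|^2\,dy$. Summing in $N$ using the elementary identity $\sum_{N \in 2^{\mathbb{Z}}} N^{n+2s}e^{-cN^2 R^2} \sim R^{-(n+2s)}$ (the factor $N^{n+2s}$ at small $N$ and the Gaussian at large $N$ each ensure convergence), and writing $R(x,y) := [\rho(x)^2 + \rho(y)^2 + |x-y|^2]^{1/2}$, this yields the pointwise bound
$$\sum_{N \in 2^{\mathbb{Z}}} N^{2s}\bigl|K_N^k f(x)\bigr|^2 \lesssim \int \frac{|f(y)|^2}{R(x,y)^{n+2s}}\,dy.$$
Setting $g := f/\rho^s$ so that $|f|^2 = \rho^{2s}|g|^2$, the task reduces to proving that the scalar integral operator on $\Omega$ with kernel $\rho(y)^{2s}/R(x,y)^{n+2s}$ is bounded on $L^{p/2}(\Omega)$, which gives the desired bound for the square function after taking a square root.

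Finally, I would establish this $L^{p/2}$ bound using the weighted Schur test (Lemma~\ref{lem:schur-test}), carrying out a region-by-region analysis in the spirit of the proof of Proposition~\ref{prop:HardyForDomain}: decompose $\Omega\times\Omega$ according to the relative sizes of $\rho(x)$, $\rho(y)$, $|x-y|$ and $1$, and on each region select an adapted weight of the form $w(x,y) = (\rho(x)/\rho(y))^\alpha$ or $(|x|/|y|)^\alpha$ with $\alpha$ tuned to the geometry. The main obstacle lies precisely here: the scalar kernel $\rho(y)^{2s}/R^{n+2s}$ is \emph{not} amenable to the unweighted Schur test, since $\int_\Omega \rho(y)^{2s} R^{-(n+2s)}\,dy$ diverges logarithmically as $|x-y|\to\infty$. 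One must therefore interlace the decay in $|x-y|$ with the $\rho(y)^{2s}$ weight through carefully chosen Schur weights adapted to each region; the range constraint $0<s<\min\{n/p,1+1/p\}$ of Theorem~\ref{main-1} will appear naturally as the admissible range for the weight exponents, mirroring the roles of Regions~$Ic$, $Id$ in Proposition~\ref{prop:HardyForDomain}. As a useful consistency check, the $L^2$ endpoint is immediate from Fubini and the computation $\int R^{-(n+2s)}\,dx \lesssim \rho(y)^{-2s}$.
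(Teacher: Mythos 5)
Your route diverges from the paper's at the square-function step, and the divergence creates a genuine gap in the range $1<p<2$. The paper bounds the vector-valued difference pointwise by the crude embedding $\ell^1\hookrightarrow\ell^2$, namely $\bigl(\sum_N N^{2s}|a_N|^2\bigr)^{1/2}\le\sum_N N^s|a_N|$, and then sums the kernel bound with weight $N^s$ to obtain the \emph{linear} integral operator with kernel $\rho(y)^s\,[\rho(x)^2+\rho(y)^2+|x-y|^2]^{-(n+s)/2}$ acting on $g:=f/\rho^s$. This operator is then handled by a single application of the weighted Schur test on $L^p$, with the one global weight $w(x,y)=(\rho(x)/\rho(y))^\alpha$, $0<\alpha<p'\wedge ps$, which is a nonempty range for every $1<p<\infty$ and every $s>0$. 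Your proposal instead applies Cauchy--Schwarz to $|K_N^kf(x)|^2$ and sums with weight $N^{2s}$, which lands you on the kernel $\rho(y)^{2s}R^{-(n+2s)}$ and the problem of bounding that operator on $L^{p/2}(\Omega)$. That is fine for $p>2$ (and the $p=2$ endpoint you handle by Fubini), but for $1<p<2$ you are asking the Schur test to operate in $L^{p/2}$ with exponent below $1$, which is outside the scope of Lemma~\ref{lem:schur-test} and is not repairable by duality or interpolation within the framework you describe, since the square function is sublinear rather than linear. So your approach does not prove the proposition on the stated range $1<p<\infty$; you would either have to add a separate argument for $1<p<2$ or switch to the paper's $\ell^1\hookrightarrow\ell^2$ reduction, which handles all $p$ in one stroke.

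Two smaller points. First, you anticipate that the restriction $0<s<\min\{n/p,1+1/p\}$ from Theorem~\ref{main-1} will surface as the admissible range for the Schur weight exponent, but in fact Proposition~\ref{prop:compare} holds for \emph{every} $s>0$ (only the Hardy inequalities in Propositions~\ref{prop:HardyForWhole} and \ref{prop:HardyForDomain} require the upper bound on $s$), and indeed the paper's weight condition $0<\alpha<p'\wedge ps$ imposes no upper bound on $s$ at all. Second, you expect a region-by-region Schur analysis as in Proposition~\ref{prop:HardyForDomain}, but the paper's proof of this proposition needs no such decomposition: a single weight and a three-way split of the $x$-integral (by comparing $|x-y|$ to $\rho(y)$ and $\rho(x)$ to $\rho(y)$) suffices, precisely because the kernel $\rho(y)^sR^{-(n+s)}$ already carries the favorable factor $\rho(y)^s$ and incorporates all of $\rho(x),\rho(y),|x-y|$ symmetrically in $R$.
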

\begin{proof}
By the triangle inequality, it holds
\begin{align*}
\text{LHS of \eqref{eq:lp-hardy}}
& \lesssim \left\| \left( \sum_{N \in 2^\mathbb{Z}} N^{2 s}
\left|\left[(\tilde{P}_N)^k-(\tilde{P}_N^{\Omega})^k \right]f\right|^2 \right)^{\frac{1}{2}} \right\|_{L^p(\mathbb{R}^n)} \\
& \lesssim \left\|\left(\sum_{N \in 2^\mathbb{Z}} N^{2s}
      \left| \int_{\Omega} K_N^k(x, y) f(y) \, dy \right|^2 \right)^{\frac{1}{2}}\right\|_{L^p(\mathbb{R}^n)} \\
& \lesssim \left\|\sum_{N \in 2^\mathbb{Z}} N^s \int_{\Omega}|K_N^k(x, y)| |f(y)| \,dy \right\|_{L^p(\mathbb{R}^n)} \\
& \lesssim \left\|\int_{\Omega}\left(\sum_{N \in 2^\mathbb{Z}} N^s |K_N^k(x, y)|\right) |f(y)| \, dy \right\|_{L^p(\mathbb{R}^n)}.
\end{align*}
Using Lemma \ref{lem:LP-kernel-upperbound}, we obtain that, for any $x\in\mathbb{R}^n$ and $y\in\Omega$,
\begin{align*}
\sum_{N \in 2^\mathbb{Z}} N^s |K_N^k(x, y)|
& \lesssim_k \sum_{N \in 2^\mathbb{Z}} N^{n+s}
                 e^{-c N^2 [\rho(x)^2 + \rho(y)^2 + |x-y|^2]} \\
& \lesssim_k \sum_{N^2 \leq [\rho(x)^2 + \rho(y)^2 + |x-y|^2]^{-1}} N^{n+s} \\
&\quad + \sum_{N^2 > [\rho(x)^2 + \rho(y)^2 + |x-y|^2]^{-1}} \frac{N^{n+s}}{\left(N^2[\rho(x)^2+\rho(y)^2+|x-y|^2]\right)^{n+s}} \\
& \lesssim_k [\rho(x)^2+\rho(y)^2+|x-y|^2]^{-\frac{n+s}{2}}.
\end{align*}
Therefore, to prove the proposition, it suffices to show that the kernel
$K:\,\mathbb{R}^n \times \Omega \rightarrow \mathbb{R}$ given by
$$
K(x, y):=\rho(y)^s[\rho(x)^2+\rho(y)^2+|x-y|^2]^{-\frac{n+s}{2}}
$$
defines an operator bounded from $L^p(\Omega)$ to $L^p(\mathbb{R}^n)$.
To establish this, we will apply Lemma \ref{lem:schur-test} with weight given by
$$
w(x, y):=\left(\frac{\rho(x)}{\rho(y)}\right)^\alpha
\quad \text{with} \quad 0 < \alpha < p^{\prime} \wedge ps.
$$

We first verify that hypothesis \eqref{eq:assump1-in-Schur} holds in this setting; indeed,
\begin{align}\label{eq:wk-2}
\int_{\Omega} w(x, y)^{\frac{1}{p}} K(x, y) \, dy
& = \int_{\Omega} \frac{\rho(x)^{\frac{\alpha}{p}} \rho(y)^{s-\frac{\alpha}{p}}}
           {[\rho(x)^2 + \rho(y)^2 + |x-y|^2]^{\frac{n+s}{2}}}\,dy \\ \notag
& \lesssim \int_{\Omega} \frac{\rho(x)^{\frac{\alpha}{p}}}
{[\rho(x) + |x-y|]^{n+\frac{\alpha}{p}}} \, dy \lesssim 1,
\end{align}
where in order to obtain the last inequality, we consider separately the regions
$|y-x| \leq \rho(x)$ and $|y-x| > \rho(x)$.

Next, we verify that hypothesis \eqref{eq:assump2-in-Schur} holds in this setting. Notice that
\begin{equation} \label{eq:wk}
\int_{\mathbb{R}^n} w(x, y)^{-\frac{\alpha}{p'}} K(x, y)\,dx
=\int_{\mathbb{R}^n}
\frac{\rho(y)^{s + \frac{\alpha}{p'}}}
{\rho(x)^{\frac{\alpha}{p'}} [\rho(x)^2 + \rho(y)^2 + |x-y|^2]^{\frac{n+s}{2}}} \, dx.
\end{equation}
On the region that $|x-y| \leq \rho(y) / 2$, we have $\rho(x) \sim \rho(y)$. Thus, we may bound the contribution of this region
to the right-hand side of \eqref{eq:wk} by
$$
\rho(y)^{-n} \int_{|x-y| \leq \frac{1}{2} \rho(y)} \, dx \lesssim 1.
$$
Moreover, the contribution of the region that $|x-y|>\rho(y) / 2$ and $\rho(x)>\rho(y)$ to the right-hand side of \eqref{eq:wk}
is bounded by
$$
\rho(y)^s \int_{|x-y|>\rho(y) / 2} \frac{1}{|x-y|^{n+s}} \, dx \lesssim 1.
$$
Finally, we estimate the contribution of the region that $|x-y| > \rho(y) / 2$ and $\rho(x) \leq \rho(y)$ to the right-hand side
of \eqref{eq:wk} by
$$
\rho(y)^{s+\frac{\alpha}{p'}} \sum_{R \geq \rho(y)} R^{-n-s}
\int_{|x-y| \sim R} \frac{\, dx}{\rho(x)^{\frac{\alpha}{p'}}}
\lesssim \rho(y)^{s + \frac{\alpha}{p'}} \sum_{R \geq \rho(y)} R^{-n-s} R^{n-\frac{\alpha}{p'}} \lesssim 1 .
$$
Therefore, we find that
\begin{equation} \label{eq:wk-1}
\int_{\mathbb{R}^n} w(x, y)^{-\frac{\alpha}{p'}} K(x, y)\,dx\lesssim1.
\end{equation}
By \eqref{eq:wk-2} and \eqref{eq:wk-1}, and applying Lemma \ref{lem:schur-test},
we further conclude that \eqref{eq:lp-hardy} holds.
This completes the proof of the proposition.
\end{proof}

\begin{thm}
Suppose that $\Omega \subset \mathbb{R}^n$
is an exterior $\cdini$ domain, $n \ge 2$.
Then, for any $1<p<\infty$ and $0<s<\min\{n/p, 1+1/p\}$, it holds that
for all $f\in C^\infty_c(\Omega)$,
$$\left\|(-\Delta )^{s/2}f\right\|_{L^p(\rn)}\sim\left\|(-\Delta_\Omega)^{s/2}f\right\|_{L^p(\Omega)}.$$
\end{thm}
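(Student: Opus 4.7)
The plan is to combine the Littlewood--Paley characterization of Theorem~\ref{littlewood-paley}, the square function comparison of Proposition~\ref{prop:compare}, and the two Hardy inequalities (Propositions~\ref{prop:HardyForWhole} and \ref{prop:HardyForDomain}) through a standard triangle-inequality argument. Fix an integer $k\ge 1$ with $2k>s$, extend $f\in C_c^\infty(\Omega)$ by zero to $\rn$, and extend $(\tilde{P}_N^\Omega)^k f$ by zero outside $\Omega$, so that
\[
S f(x):=\biggl(\sum_{N\in 2^{\zz}} N^{2s}\bigl|(\tilde P_N)^k f(x)\bigr|^2\biggr)^{1/2},\qquad S^\Omega f(x):=\biggl(\sum_{N\in 2^{\zz}} N^{2s}\bigl|(\tilde P_N^\Omega)^k f(x)\bigr|^2\biggr)^{1/2}
\]
are both measurable on $\rn$, with $\|S^\Omega f\|_{L^p(\rn)}=\|S^\Omega f\|_{L^p(\Omega)}$. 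Theorem~\ref{littlewood-paley}(i) then gives $\|(-\Delta)^{s/2}f\|_{L^p(\rn)}\sim\|Sf\|_{L^p(\rn)}$ and Theorem~\ref{littlewood-paley}(ii) gives $\|(-\Delta_\Omega)^{s/2}f\|_{L^p(\Omega)}\sim\|S^\Omega f\|_{L^p(\rn)}$.

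For the inequality $\|(-\Delta)^{s/2}f\|_{L^p(\rn)}\lesssim\|(-\Delta_\Omega)^{s/2}f\|_{L^p(\Omega)}$, the triangle inequality yields
\[
\|Sf\|_{L^p(\rn)}\le \|Sf-S^\Omega f\|_{L^p(\rn)}+\|S^\Omega f\|_{L^p(\rn)}.
\]
The first term is controlled by $\|f/\rho^s\|_{L^p(\Omega)}$ thanks to Proposition~\ref{prop:compare}, and then by $\|(-\Delta_\Omega)^{s/2}f\|_{L^p(\Omega)}$ thanks to the Hardy inequality for the Dirichlet Laplacian (Proposition~\ref{prop:HardyForDomain}); this is precisely where the restriction $s<\min\{n/p,1+1/p\}$ enters. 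The second term is $\|(-\Delta_\Omega)^{s/2}f\|_{L^p(\Omega)}$ up to the equivalence from Theorem~\ref{littlewood-paley}(ii). Combining these gives the desired upper bound.

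The reverse inequality $\|(-\Delta_\Omega)^{s/2}f\|_{L^p(\Omega)}\lesssim\|(-\Delta)^{s/2}f\|_{L^p(\rn)}$ is obtained in the same way after swapping roles: one writes
\[
\|S^\Omega f\|_{L^p(\rn)}\le \|Sf-S^\Omega f\|_{L^p(\rn)}+\|Sf\|_{L^p(\rn)},
\]
applies Proposition~\ref{prop:compare} to the first term, and then uses the Hardy inequality for the Euclidean Laplacian (Proposition~\ref{prop:HardyForWhole}), which requires only $s<n/p$, to bound $\|f/\rho^s\|_{L^p(\Omega)}$ by $\|(-\Delta)^{s/2}f\|_{L^p(\rn)}$; the remaining term is handled by Theorem~\ref{littlewood-paley}(i).

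Essentially all of the analytical difficulty has already been absorbed into the ingredients: the sharp Gaussian-type estimate on $K_N^k$ in Lemma~\ref{lem:LP-kernel-upperbound} (which rested on Theorem~\ref{heat-comparison}), the delicate Schur-test argument yielding Proposition~\ref{prop:HardyForDomain} (where the range $s<1+1/p$ is genuinely used), and the Littlewood--Paley square function characterization. Consequently this final theorem is essentially a clean synthesis; the only point requiring a moment's care is to notice that the kernel comparison Proposition~\ref{prop:compare} measures the difference of square functions in $L^p(\rn)$ (not just on $\Omega$), which is exactly what is needed so that the triangle inequality can be applied in $L^p(\rn)$ in both directions.
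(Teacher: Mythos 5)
Your proposal follows the paper's proof exactly: Littlewood--Paley equivalence (Theorem~\ref{littlewood-paley}), triangle inequality, the square-function comparison (Proposition~\ref{prop:compare}), and the two Hardy inequalities (Propositions~\ref{prop:HardyForWhole} and~\ref{prop:HardyForDomain}), applied in each direction. The only cosmetic difference is that the paper applies the pointwise $\ell^2$-triangle inequality to pass to the square function of the differences $[(\tilde P_N)^k-(\tilde P_N^\Omega)^k]f$ (which is what Proposition~\ref{prop:compare}'s proof actually bounds), while you invoke the plain $L^p$ triangle inequality and the literal statement of Proposition~\ref{prop:compare}; both are valid and yield the same estimate.
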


\begin{proof}
Fix $f \in C_c^{\infty}(\Omega)$ and choose an integer $k \geq 1$ such that $2k > s$. Using Theorem \ref{littlewood-paley},
the triangle inequality, Proposition \ref{prop:compare}, and Proposition \ref{prop:HardyForDomain}, we conclude that
\begin{align*}
\left\|(-\Delta )^{s/2} f\right\|_{L^p(\mathbb{R}^n)}
& \sim \left\| \left(\sum_{N \in 2^\mathbb{Z}} N^{2s}
          \left|(\tilde{P}_N)^k f\right|^2\right)^{\frac{1}{2}}\right\|_{L^p(\mathbb{R}^n)} \\
& \lesssim \left\|\left(\sum_{N \in 2^\mathbb{Z}}
N^{2s} \left|(\tilde{P}_N^{\Omega})^k f\right|^2\right)^{\frac{1}{2}}\right\|_{L^p(\Omega)}
  + \left\|\left(\sum_{N \in 2^\mathbb{Z}}
   N^{2s}\left|\left[(\tilde{P}_N)^k - (\tilde{P}_N^{\Omega})^k\right]f\right|^2\right)^{\frac{1}{2}}\right\|_{L^p(\mathbb{R}^n)} \\
& \lesssim \left\|(-\Delta_{\Omega})^{s/2} f\right\|_{L^p(\Omega)}
               + \left\|\frac{f}{\rho^s}\right\|_{L^p(\Omega)} \\
& \lesssim \left\|(-\Delta_{\Omega})^{s/2} f\right\|_{L^p(\Omega)}.
\end{align*}

Arguing similarly and using Proposition \ref{prop:HardyForWhole} in place of Proposition \ref{prop:HardyForDomain}, we also obtain
\begin{align*}
\left\|(-\Delta_{\Omega})^{s/2} f\right\|_{L^p(\Omega)}
& \sim \left\|\left(\sum_{N \in 2^\mathbb{Z}}
N^{2s} \left|(\tilde{P}_N^{\Omega})^k f\right|^2\right)^{\frac{1}{2}}\right\|_{L^p(\Omega)} \\
& \lesssim \left\|\left(\sum_{N \in 2^\mathbb{Z}}
  N^{2s} \left|(\tilde{P}_N)^k f\right|^2\right)^{\frac{1}{2}}\right\|_{L^p(\Omega)}
+ \left\|\left(\sum_{N \in 2^\mathbb{Z}}
    N^{2s} \left|\left[(\tilde{P}_N)^k - (\tilde{P}_N^{\Omega})^k\right] f\right|^2\right)^{\frac{1}{2}}\right\|_{L^p(\Omega)} \\
& \lesssim\left\|(-\Delta )^{s/2} f\right\|_{L^p(\mathbb{R}^n)}
   + \left\|\frac{f}{\rho^s}\right\|_{L^p(\Omega)} \\
& \lesssim \left\|(-\Delta )^{s/2} f\right\|_{L^p(\mathbb{R}^n)}.
\end{align*}
This completes the proof of the theorem.
\end{proof}

\begin{rem}\label{unboundedness}\rm
The same argument as in \cite[Proposition 7.1]{KVZ16} together with the heat kernel estimate obtained in Theorem \ref{main-heat-kernel}
shows that Theorem \ref{main-1} does not hold for $s\ge 1+1/p$, where $1<p<\infty$.

For the case that $2/p<s<1+1/p$ with $1<p<\infty$,
following \cite[Proposition 7.2]{KVZ16}, let us set $\Omega:=\{x\in\rr^2:\,|x|>1\}$ and set for $R>1$ that
$$f_R(x):=\phi_R(x)\log |x|,$$
where $\phi_R$ is a smooth function satisfying that
$$\phi_R(x)=\begin{cases}\log(R/|x|)(\log R)^{-1},\, & 1\le |x|\le R/2,\\
0,\, & |x|\ge R,
\end{cases}$$
and
$$|\partial_x^\alpha\phi_R(x)|\lesssim_\alpha R^{-|\alpha|}(\log R)^{-1}\,  \ \mbox{for}\, \  R/2\le |x|\le R,$$
for all multi-indices $\alpha$ with $|\alpha|\ge 0$.

Then it holds for $R>2$ and $p>1$ that
$$\|f_R\|_{L^p(\Omega)}\lesssim R^{\frac{2}{p}}\log R$$
and
$$\left\|\Delta_\Omega f_R\right\|_{L^p(\Omega)}\lesssim R^{\frac{2}{p}-2}.$$
By the boundedness of $(-\Delta_\Omega)^{it}$ on $L^p(\Omega)$ from \cite[Theorem 2]{siw01}
and the complex interpolation theorem, we conclude for $0<s<2$ that
$$\left\|(-\Delta_\Omega)^{s/2} f_R\right\|_{L^p(\Omega)}\lesssim R^{\frac{2}{p}-s}(\log R)^{\frac{2-s}{2}}.$$
This further implies for $p>2/s$ that
\begin{align}\label{non-equivalence-1}
\left\|(-\Delta_\Omega)^{s/2} f_R\right\|_{L^p(\Omega)}\to 0
\end{align}
as $R\to \infty$.

On the other hand, the same proof as in \cite[Proposition 7.2]{KVZ16} proves that
\begin{align}\label{non-equivalence-2}
\left\|(-\Delta )^{s/2} f_R\right\|_{L^p(\rn)}\gtrsim 1
\end{align}
uniformly for $R>2$.

Therefore, the two estimates \eqref{non-equivalence-1} and \eqref{non-equivalence-2} show that
Theorem \ref{main-1} does not hold for $2/p<s<1+1/p$ on exterior domains in $\rr^2$.
\end{rem}

\section{An application to NLS}\label{s5} \hskip\parindent
In this section, we apply our main result to the NLS
\begin{align}\label{nls-2d}
\begin{cases}
  i\partial_t u=-\Delta_{\Omega} u \pm |u|^pu\ \text{in}\ \Omega\times I,\\
u(x,0)=u_0(x)\ \text{in}\ \Omega,\\
u(x,t)|_{\partial\Omega}=0.
\end{cases}
\end{align}


Here $p\in(1,\infty)$ and $I$ denotes a time interval containing the origin.
For $n\ge 3$, the energy critical case was obtained in \cite{KVZ16} with $p=\frac{4}{n-2}$.

Our result will mainly be applicable in the two dimensional exterior domains for the $\dot{H}^s$-critical case, where
\begin{align*}
\begin{cases}
 s=\frac{n}{2}-\frac{2}{p}, \\
0<s\le 1,\\
0<p<\infty.
\end{cases}
\end{align*}

We shall need the following Strichartz estimate from \cite{I10}.

\begin{thm}\label{Strichartz}
Let $n\ge 2$, $\Omega\subset\mathbb{R}^n$ be the exterior of a smooth compact strictly convex obstacle, and $I$ be a finite
time interval containing the origin.
Let $q,\tilde{q}>2$, $2\le r,\tilde r\le \infty$ satisfy the scaling conditions
$$\frac{2}{q}+\frac{n}{r}=\frac{n}{2}=\frac{2}{\tilde q}+\frac{n}{\tilde r}, \, (n,q,r)\neq (2,2,\infty)\neq (n,\tilde q,\tilde r).$$
Then it holds that
\begin{align*}
\left\|e^{it\Delta_{\Omega}}u_0-i\int_0^t e^{i(t-s)\Delta_{\Omega}}F(s)\,ds\right\|_{L^q_tL^r_x(I\times \Omega)}
\lesssim \|u_0\|_{L^2(\Omega)}+\|F\|_{L^{\tilde{q}'}_tL^{\tilde{r}'}_x(I\times\Omega)}
\end{align*}
with the implicit positive constant independent of the time interval $I$.
\end{thm}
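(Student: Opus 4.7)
The plan is to first reduce, via the $TT^*$ identity and the Christ--Kiselev lemma (permissible since $q,\tilde q>2$, so the non-endpoint hypothesis holds on both sides), the full inhomogeneous bound to the homogeneous estimate $\|e^{it\Delta_\Omega} u_0\|_{L^q_tL^r_x(I\times\Omega)}\lesssim \|u_0\|_{L^2(\Omega)}$ for every admissible pair $(q,r)$ with $q>2$. I would then perform a dyadic Littlewood--Paley decomposition $u_0=\sum_{N\in 2^{\mathbb Z}}P_N u_0$ using spectral multipliers of $-\Delta_\Omega$ (whose $L^p$ boundedness and the associated square function estimate follow from the Gaussian heat-kernel bounds on $\Omega$), reducing matters to a frequency-localised Strichartz estimate for each $P_N u_0$.

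For the frequency-localised bound I would carry out a semiclassical rescaling $h=N^{-1}$, turning the problem into unit-frequency Schr\"odinger flow on a time window of length proportional to $h$. The core input is the short-time dispersive estimate
\[
\bigl\|e^{it\Delta_\Omega}P_N\bigr\|_{L^1(\Omega)\to L^\infty(\Omega)}\lesssim |t|^{-n/2},\qquad 0<|t|\lesssim N^{-1},
\]
from which Strichartz for all $q>2$ in the scaling class follows at each frequency by the Keel--Tao $TT^*$ argument. Away from $\partial\Omega$ a smooth cutoff reduces this to the Euclidean Schr\"odinger kernel bound. Near $\partial\Omega$ I would construct a Melrose--Taylor parametrix for the semiclassical half-wave propagator $e^{ith\sqrt{-\Delta_\Omega}}$ adapted to the strictly convex boundary, and then pass from wave-kernel to Schr\"odinger-kernel bounds via a Kanai-type subordination (oscillatory integral) formula.

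To go from the short-time, single-frequency bound to a global-in-time estimate, and to reassemble frequencies, I would invoke the local smoothing estimate
\[
\bigl\|\chi\,e^{it\Delta_\Omega}u_0\bigr\|_{L^2_t H^{1/2}_x(\mathbb R\times\Omega)}\lesssim \|u_0\|_{L^2(\Omega)},\qquad \chi\in C^\infty_c(\mathbb R^n),
\]
known for non-trapping exterior domains and in particular for strictly convex obstacles (every broken geodesic escapes every compact set). A cutoff/gluing argument in the Burq--G\'erard--Tzvetkov spirit then splices the free-space Strichartz bound far from $\partial\Omega$ with the near-obstacle semiclassical estimate to produce the homogeneous Strichartz bound on any finite interval $I$ with constant independent of $|I|$; the inhomogeneous form then follows by $TT^*$ together with Christ--Kiselev.

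The main obstacle is the Melrose--Taylor parametrix construction uniformly across the glancing set on the strictly convex boundary, where Airy-type caustics arise and where strict convexity is used essentially to control the oscillatory-integral phase uniformly in the semiclassical parameter; this is the technical heart of the argument. Once this dispersive input is in place, the remaining steps---dyadic decomposition, Keel--Tao $TT^*$, local smoothing, gluing, and Christ--Kiselev---are comparatively routine. The exclusion $(n,q,r)\neq(2,2,\infty)$ simply reflects the fact that the corresponding endpoint $L^2_tL^\infty_x$ estimate already fails on free $\mathbb R^2$ and hence cannot hold on $\Omega$ either.
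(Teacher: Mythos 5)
The paper does not prove this theorem: it is quoted from Ivanovici \cite{I10}, as the sentence immediately before the statement makes clear, so there is no internal argument to compare against. Your sketch does trace the skeleton of the cited proof --- Littlewood--Paley localization via spectral multipliers of $-\Delta_\Omega$, semiclassical rescaling, a Melrose--Taylor parametrix near the strictly convex boundary, Kanai-type subordination from the half-wave propagator, Keel--Tao $TT^*$ together with Christ--Kiselev away from $q=2$, and local smoothing from non-trapping --- and your remarks about the role of strict convexity and the exclusion $(n,q,r)\neq(2,2,\infty)$ are correct.

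The substantive reservation is one of proportion. The line you label ``the core input,'' the frequency-localized dispersive bound $\|e^{it\Delta_\Omega}P_N\|_{L^1(\Omega)\to L^\infty(\Omega)}\lesssim |t|^{-n/2}$ for $0<|t|\lesssim N^{-1}$, is not an input on the way to the theorem --- uniformly across the glancing set it essentially \emph{is} the theorem. Establishing it requires the full Airy-type degenerate stationary-phase analysis of the Melrose--Taylor parametrix, which is the bulk of Ivanovici's paper, and does not follow merely from ``constructing'' the parametrix. Likewise, calling the remaining Littlewood--Paley/local-smoothing/gluing steps ``comparatively routine'' understates the difficulty: a naive Burq--G\'erard--Tzvetkov-style splicing gives Strichartz with a fractional derivative loss, and removing that loss is exactly where the sharp $N^{-1}$ time scale of the near-boundary dispersive estimate and the $H^{1/2}$ local smoothing gain must be combined with some care. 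Since the present paper only needs the statement and cites \cite{I10} for it, the right move here is to do the same rather than attempt a rederivation.
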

Note that in the above theorem
$$
u(t) = e^{it\Delta_{\Omega}}u_0-i\int_0^t e^{i(t-s)\Delta_{\Omega}}F(s)
$$
represents the proper formulation of the solution to
$$
    i \partial_t u(x,t) = -\Delta u(t,x) + F(t,x)
   \quad \text{with }
u(x,0)=u_0(x)
\quad \text{and}\quad
u(x,t)|_{\partial\Omega}=0.
$$

In what follows, we fix an  $r$ as
$$r:=\frac{2n^2+2(4-2s)n}{n^2-2sn+8s}$$
and $q$ such that
$$\frac{2}{q}+\frac{n}{r}=\frac{n}{2},$$
where $0<s<1$ for $n=2$.
Moreover we choose the indexes $\tilde{q},\tilde{r}$ satisfying
\begin{align}\label{tilde-q}
q=(1+p)\tilde{q}'
\end{align}
and
\begin{align}\label{tilde-r}
\frac{r}{\tilde{r}'}=\frac{4(n-sr)}{n(n-2s)}+1,
\end{align}
i.e.,
$$\tilde{r}'=\frac{r}{\frac{4(n-sr)}{n(n-2s)}+1}=\frac{rn(n-2s)}{n^2+(4-2s)n-4sr}.$$

Note that as  $0<s<1$ for $n=2$, one has
$$2<r<\frac{n}{s}.$$
Moreover, it holds that
$$\frac 2q=\frac n2-\frac nr=\frac n2- \frac{n^2-2sn+8s}{2n+2(4-2s)}
=\frac{n^2+(4-2s)n-(n^2-2sn+8s)}{2n+2(4-2s)}=\frac{2n-4s}{n+(4-2s)}<1,$$
and therefore $q>2$.

Regarding the parameters $\tilde{q},\tilde{r}$, note that
\begin{align*}
\frac{2}{\tilde{q}}+\frac{n}{\tilde{r}}=2(1-\frac{1}{\tilde{q}'})+n(1-\frac{1}{\tilde{r}'})
&=2(1-\frac{1+p}{q})+n(1-\frac 1r-\frac{4(n-sr)}{nr(n-2s)})\nonumber\\
&=2+n-\frac{2}{q}-\frac{n}{r} -\frac{2p}{q}-\frac{4(n-sr)}{r(n-2s)}\nonumber\\
&=\frac n2+\frac{2rn-4n}{r(n-2s)}-\frac{4}{(n-2s)}\frac{n(r-2)}{2r}\nonumber\\
&=\frac  n2,
\end{align*}
$$\tilde{q}'=\frac{q}{1+p}=\frac{n+(4-2s)}{n-2s} \frac{n-2s}{n-2s+4}=1,$$
and
$$\tilde{r}'=\frac{r}{\frac{4(n-sr)}{n(n-2s)}+1}=\frac{rn(n-2s)}{n^2+(4-2s)n-4sr}=2.$$

Following \cite{KV13,KVZ16}, we have the following strong form of local well-posedness of
the equation \eqref{nls-2d}.

\begin{thm}\label{NLS-2D}
Let $s\in (0,1)$, $p:=4/(2-2s)$, $r:=({6-2s})/({1+s})$, and $q:=({3-s})/({1-s})$. Let $\Omega\subset\rr^2$ be the exterior of a
smooth compact strictly convex obstacle. There exists $\eta>0$ such that if $u_0\in H^s_D(\Omega)$ satisfies
$$\left\|(-\Delta_{\Omega})^{\frac s2}e^{it\Delta_{\Omega}}u_0\right\|_{L_t^{q}L_x^r(I\times\Omega)}\le \eta$$
for some time interval $I$ containing $0$, then there is a unique strong $C_t^0\dot{H}_D^s(I\times\Omega)$  solution to
the equation \eqref{nls-2d}, and it holds that
$$\left\|(-\Delta_{\Omega})^{\frac s2}u\right\|_{L_t^{q}L_x^r(I\times\Omega)}\lesssim \eta$$
with the implicit positive constant independent of the time interval $I$.
\end{thm}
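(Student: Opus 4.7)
The plan is to run the standard Banach contraction argument in a Strichartz-type ball around the linear evolution, using the fractional chain rule (Corollary \ref{c1.1}) together with the Sobolev embedding transferred to $\Omega$ via Theorem \ref{main-1} to close the nonlinear estimate. Define
$$
X := \left\{u \in C_t^0 \dot{H}_D^s(I\times\Omega):\ \left\|(-\Delta_\Omega)^{s/2} u\right\|_{L_t^{q} L_x^{r}(I\times\Omega)} \le 2\eta\right\},
$$
endowed with the metric $d(u,v):= \|u-v\|_{L_t^{q} L_x^{r}}$, and consider the Duhamel map
$$
\Phi(u)(t) := e^{it\Delta_\Omega} u_0 \mp i \int_0^t e^{i(t-s)\Delta_\Omega}\bigl(|u|^p u\bigr)(s)\,ds.
$$
One first checks that $(q,r)$ is Strichartz-admissible in dimension $n=2$, i.e. $2/q + 2/r = 1$, which is immediate from the stated values. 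The nonlinear piece will be placed in the dual pair $(\tilde q', \tilde r')=(1,2)$, which corresponds to $(\tilde q,\tilde r)=(\infty,2)$ and is also admissible.

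The heart of the argument is to estimate $\Phi$ on $X$. Applying $(-\Delta_\Omega)^{s/2}$ to $\Phi(u)$ and using Theorem \ref{Strichartz} with the pair $((q,r),(\infty,2))$ yields
$$
\left\|(-\Delta_\Omega)^{s/2} \Phi(u)\right\|_{L_t^{q}L_x^{r}} \lesssim \left\|(-\Delta_\Omega)^{s/2} e^{it\Delta_\Omega} u_0\right\|_{L_t^{q}L_x^{r}} + \left\|(-\Delta_\Omega)^{s/2}(|u|^p u)\right\|_{L_t^{1} L_x^{2}}.
$$
For the last term, I apply the fractional chain rule of Corollary \ref{c1.1} pointwise in $t$ with exponents $1/2 = 1/p_1 + 1/r$, giving $p_1 = 2(3-s)/(1-s)$ and hence $p p_1 = 2(3-s)/(1-s)^2$, which is exactly the Sobolev companion exponent of $r$ at regularity $s$ in $\mathbb{R}^2$ (since $1/r - s/2 = (1-s)^2/(2(3-s))$). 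Using Theorem \ref{main-1} to replace $(-\Delta_\Omega)^{s/2}$ by $(-\Delta)^{s/2}$ on $L^r$ and then the classical Sobolev embedding yields $\|u\|_{L_x^{pp_1}} \lesssim \|(-\Delta_\Omega)^{s/2}u\|_{L_x^r}$. A Hölder estimate in time with $1/q_1+1/q=1$, $q_1=(3-s)/2$, and the identity $p q_1 = q$ then produces
$$
\left\|(-\Delta_\Omega)^{s/2}(|u|^p u)\right\|_{L_t^{1}L_x^{2}} \lesssim \left\|(-\Delta_\Omega)^{s/2} u\right\|_{L_t^{q}L_x^{r}}^{p+1}.
$$
Combining with the smallness hypothesis gives $\|(-\Delta_\Omega)^{s/2}\Phi(u)\|_{L_t^{q}L_x^{r}} \le C\eta + C(2\eta)^{p+1} \le 2\eta$ provided $\eta$ is small enough, so $\Phi(X)\subset X$.

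For the contraction property, I work at the unweighted $L_t^{q} L_x^{r}$ level. Using dual Strichartz again with $((q,r),(\infty,2))$ and the pointwise bound $\bigl||u|^p u - |v|^p v\bigr| \lesssim (|u|^p + |v|^p)|u-v|$ together with Hölder and the same Sobolev embedding (now applied without the fractional derivative on $u-v$) gives
$$
d(\Phi(u),\Phi(v)) \lesssim \bigl(\|(-\Delta_\Omega)^{s/2}u\|_{L_t^{q}L_x^{r}}^p + \|(-\Delta_\Omega)^{s/2}v\|_{L_t^{q}L_x^{r}}^p\bigr)\, d(u,v) \lesssim \eta^{p}\, d(u,v),
$$
which is a strict contraction for $\eta$ small. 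The Banach fixed point theorem then yields a unique $u\in X$ solving the Duhamel equation; membership in $C_t^0 \dot H^s_D$ and the final bound $\|(-\Delta_\Omega)^{s/2}u\|_{L_t^{q}L_x^{r}}\lesssim \eta$ follow from applying Strichartz once more with the endpoint pair on the left.

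The main obstacle is not the fixed-point mechanism but the precise verification that all scaling relations line up in two dimensions: the Strichartz admissibility $2/q+2/r=1$, the Hölder identities $p+1 = q/\tilde q'$ and $1/\tilde r' = p/p^* + 1/r$, and crucially the Sobolev identity $p^* = pp_1 = 2(3-s)/(1-s)^2$. Once these numerical checks pass — and they do for the stated $(p,q,r)$ — the proof reduces to the two applications of Theorem \ref{Strichartz} and one application of Corollary \ref{c1.1}, where Theorem \ref{main-1} is used implicitly to convert the domain Sobolev embedding into the flat one.
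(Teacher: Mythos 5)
Your proposal matches the paper's proof strategy: the Duhamel map, the Strichartz estimate from Theorem \ref{Strichartz} with dual pair $(\tilde q',\tilde r')=(1,2)$, the fractional chain rule of Corollary \ref{c1.1}, the Sobolev embedding transferred through Theorem \ref{main-1}, and contraction in the $L^q_tL^r_x$ metric. Note a small arithmetic slip: from $1/p_1 = 1/2 - 1/r = (1-s)/(3-s)$ one gets $p_1 = (3-s)/(1-s)$, not $2(3-s)/(1-s)$; the product $pp_1 = 2(3-s)/(1-s)^2$ you then use is nevertheless the correct Sobolev companion of $r$, so the estimate closes exactly as in the paper. One cosmetic structural difference: the paper's iteration set $B_1\cap B_2$ additionally tracks $\|u\|_{L^\infty_t H^s_x}$ and $\|u\|_{L^q_tL^r_x}$, which makes completeness of the metric space under $d(u,v)=\|u-v\|_{L^q_tL^r_x}$ immediate (weak lower semicontinuity of the norm balls), whereas your $X$ only fixes $\|(-\Delta_\Omega)^{s/2}u\|_{L^q_tL^r_x}$ and imposes $C_t^0\dot H^s_D$ membership, which is not obviously closed under the metric; adding those two bounds to $X$ (or verifying completeness and $C_t^0\dot H^s_D$ a posteriori, as you indicate) repairs this, so it is not a genuine gap.
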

\begin{proof}
Let $\tilde q$ and $\tilde r$ be defined as in \eqref{tilde-q} and \eqref{tilde-r}, respectively.
Note that $\tilde r=2$ and $\tilde q=\infty.$

Consider the mapping given by
\begin{equation*}
\Phi(u):=e^{it\Delta_{\Omega}}u_0 \mp i\int_0^t e^{i(t-s)\Delta_{\Omega}} |u(s)|^pu(s)\,ds.
\end{equation*}
Let
\begin{align*}
B_1:=\left\{u\in L^\infty_t{H}^s(I\times\Omega):\,\|u\|_{L^\infty_t{H}^s_x(I\times\Omega)}\le 2\|u_0\|_{H_x^s(\Omega)}+C(\Omega)(2\eta)^{1+p}\right\}
\end{align*}
and
\begin{align*}
B_2:=\left\{u\in L^q_tH^{s,r}(I\times\Omega):\,
\|u\|_{L^q_tL_x^{r}(I\times\Omega)}\le 2C(\Omega)\|u_0\|_{L^2_x(\Omega)}
\, \text{and} \,
\|(-\Delta_{\Omega})^{\frac s2}u\|_{L^q_tL_x^{r}(I\times\Omega)} \le 2\eta
\right\}
\end{align*}
Following the strategy of \cite{KV13,KVZ16}, we shall show that
$\Phi$ is a contraction on the set $B_1\cap B_2$ under the metric
$$d(u,v):=\|u-v\|_{L^q_tL_x^{r}(I\times\Omega)}.$$

By Theorem \ref{Strichartz}, we have
\begin{align}\label{e5.1}
\left\|(-\Delta_{\Omega})^{s/2}\Phi(u)\right\|_{L^q_tL^r_x(I\times \Omega)}
&\le \left\|(-\Delta_{\Omega})^{s/2}e^{it\Delta_{\Omega}}u_0\right\|_{L^q_tL^r_x(I\times \Omega)}
+C\left\|(-\Delta_{\Omega})^{s/2} (|u|^pu)\right\|_{L^{\tilde q'}_tL^{\tilde r'}_x(I\times \Omega)}\\ \nonumber
&\le \eta+C\left\|(-\Delta_{\Omega})^{s/2} (|u|^pu)\right\|_{L^{\tilde q'}_tL^{\tilde r'}_x(I\times \Omega)}.
\end{align}
By applying  the chain rule obtained in Corollary \ref{c1.1} and the H\"older inequality, we see that
\begin{align}\label{e5.2}
\left\|(-\Delta_{\Omega})^{s/2} (|u|^pu)\right\|_{L^{\tilde q'}_tL^{\tilde r'}_x(I\times \Omega)}&=
\left(\int_I \left(\int_\Omega |(-\Delta_{\Omega})^{\frac{s}{2}} (|u|^pu)|^{\tilde r'}\,dx\right)^{\tilde q'/\tilde r'}\,dt\right)^{1/\tilde q'}\\ \nonumber
&\lesssim \left(\int_I \left(\int_\Omega |(-\Delta_{\Omega})^{\frac{s}{2}}u|^{r}\,dx\right)^{\frac{\tilde q'}{r}}\left(\int_\Omega |u|^{\frac{pr\tilde r'}{r-\tilde r'}}\,dx\right)^{\frac{\tilde q'(r-\tilde r')}{r\tilde r'}}\,dt\right)^{1/\tilde q'}\\ \nonumber
&\lesssim \left\|(-\Delta_{\Omega})^{s/2}u\right\|_{L^{ q}_tL^{ r}_x(I\times \Omega)}\left(\int_I\left(\int_\Omega |u|^{\frac{pr\tilde r'}{r-\tilde r'}}\,dx\right)^{\frac{\tilde q'(r-\tilde r')}{r\tilde r'}\frac{q}{q-\tilde q'}}\,dt\right)^{\frac{q-\tilde q'}{q\tilde q'}}.
\end{align}
Notice that
\begin{align*}
  \frac{pr\tilde r'}{r-\tilde r'}=\frac{4r}{2-2s}  \frac{2}{r-2}=\frac{2r}{2-sr}
\end{align*}
and
\begin{align*}
  \frac{pq\tilde{q}'}{q-\tilde{q}'}=\frac{pq}{q-1}=q.
\end{align*}
By applying the Sobolev embedding, we further see that
\begin{align}\label{e5.3}
\left(\int_I\left(\int_\Omega |u|^{\frac{pr\tilde r'}{r-\tilde r'}}\,dx\right)^{\frac{\tilde q'(r-\tilde r')}{r\tilde r'}
\frac{q}{q-\tilde q'}}\,dt\right)^{\frac{q-\tilde q'}{q\tilde q'}}&\lesssim
\left(\int_I\left(\int_\Omega |(-\Delta_{\Omega})^{\frac s2}u|^{r}\,dx\right)^{\frac{pq\tilde q'}{r(q-\tilde q')}}\,dt\right)^{\frac{q-\tilde q'}{q\tilde q'}}\\ \nonumber
&\lesssim \left\|(-\Delta_{\Omega})^{s/2}u\right\|_{L^{ q}_tL^{ r}_x(I\times \Omega)}^p.
\end{align}
Thus, from \eqref{e5.2} and \eqref{e5.3}, it follows that
\begin{align}\label{e5.4}
\left\|(-\Delta_{\Omega})^{s/2} (|u|^pu)\right\|_{L^{\tilde q'}_tL^{\tilde r'}_x(I\times \Omega)}&\lesssim
\left\|(-\Delta_{\Omega})^{s/2}u\right\|_{L^{ q}_tL^{ r}_x(I\times \Omega)}^{1+p}\lesssim \eta^{1+p}.
\end{align}
Consequently, by \eqref{e5.1} and \eqref{e5.4}, we see that
\begin{align}\label{e5.5}
\left\|(-\Delta_{\Omega})^{s/2}\Phi(u)\right\|_{L^q_tL^r_x(I\times \Omega)}
&\le  \eta+C(\eta)^{1+p}\le 2\eta,
\end{align}
provided $\eta$ small enough.

Similarly, by applying Theorem \ref{Strichartz}, the chain rule and the Sobolev embedding, we have
\begin{align}\label{e5.6}
\left\|\Phi(u)\right\|_{L^\infty_tH^{s}_x(I\times \Omega)}
&\le \left\|u_0\right\|_{H^{s}_x(\Omega)}
+C\left\||u|^pu\right\|_{L^{\tilde q'}_tH^{\tilde r'}_x(I\times \Omega)}\\ \nonumber
&\le \left\|u_0\right\|_{H^{s}_x(\Omega)}+C\left\|u\right\|_{L^{ q}_tH^{s}_x(I\times \Omega)}\left\|(-\Delta_{\Omega})^{s/2} u\right\|^p_{L^{q}_tL^{r}_x(I\times \Omega)}\\ \nonumber
&\le \left\|u_0\right\|_{H^{s}_x(\Omega)}+C\left(2C\left\|u_0\right\|_{H^{s}_x(\Omega)}+2\eta\right) (2\eta)^p\\ \nonumber
&\le  2\left\|u_0\right\|_{H^{s}_x(\Omega)}+C(2\eta)^{1+p}
\end{align}
and
\begin{align}\label{e5.7}
\left\|\Phi(u)\right\|_{L^q_tL^{r}_x(I\times \Omega)}
&\le C\left\|u_0\right\|_{L^2_x(\Omega)}
+C\left\||u|^pu\right\|_{L^{\tilde q'}_tL^{\tilde r'}_x(I\times \Omega)}\\ \nonumber
&\le  C\left\|u_0\right\|_{L^2_x(\Omega)}+C\left\|u\right\|_{L^{ q}_tL^{ r}_x(I\times \Omega)}\left\|(-\Delta_{\Omega})^{s/2} u\right\|^p_{L^{q}_tL^{r}_x(I\times \Omega)}\\ \nonumber
&\le \left\|u_0\right\|_{L^2_x(\Omega)}+C\left(2C\left\|u_0\right\|_{L^2_x(\Omega)}\right) (2\eta)^p\\ \nonumber
&\le  2C\left\|u_0\right\|_{L^2_x(\Omega)},
\end{align}
provided $\eta$ small enough.

Therefore, from \eqref{e5.5}, \eqref{e5.6}, and \eqref{e5.7}, it follows that
$\Phi$ maps $B_1\cap B_2$ to itself. Finally, note that the same arguments as above yield that
\begin{align*}
d(\Phi(u),\Phi(v))&\le C\left\||u|^pu-|v|^pv\right\|_{L^{\tilde q'}_tL^{\tilde r'}_x(I\times \Omega)}\nonumber\\
&\le Cd(u,v) \left(\left\|(-\Delta_{\Omega})^{s/2} u\right\|^p_{L^{q}_tL^{r}_x(I\times \Omega)}+
\left\|(-\Delta_{\Omega})^{s/2} v\right\|^p_{L^{q}_tL^{r}_x(I\times \Omega)}\right)\nonumber\\
&\le Cd(u,v)(2\eta)^{p}\nonumber\\
&\le \frac 12d(u,v),
\end{align*}
for small enough $\eta$. The proof is therefore complete.
\end{proof}

\subsection*{Acknowledgments}
\addcontentsline{toc}{section}{Acknowledgments} \hskip\parindent
R. Jiang would like to thank Professor Changxing Miao for kindly explaining the NLS equation to him and thank Professor Yuan Zhou for helpful discussions. R. Jiang was partially supported by NNSF of China (12471094 \& 11922114). S. Yang is partially supported by NNSF of
China (Grant No. 12431006), the Key Project of Gansu Provincial National
Science Foundation (Grant No. 23JRRA1022)
and the Innovative Groups of Basic Research in Gansu Province (Grant No. 22JR5RA391).

\noindent Renjin Jiang \& Houkun Zhang\\
\noindent Academy for Multidisciplinary Studies\\
\noindent Capital Normal University\\
\noindent Beijing 100048\\
\noindent \texttt{rejiang@cnu.edu.cn}\\
\noindent \texttt{2230501010@cnu.edu.cn}

\

\noindent Tianjun Shen\\
\noindent School of Mathematical Sciences\\
\noindent Beijing Normal University\\
\noindent Beijing 100082\\
\noindent  \texttt{shentj@bnu.edu.cn}

\

\noindent Sibei Yang\\
\noindent School of Mathematics and Statistics\\
\noindent Gansu Key Laboratory of Applied Mathematics and Complex Systems\\
\noindent  Lanzhou University\\
\noindent Lanzhou 730000\\
\noindent People's Republic of China\\
\noindent \texttt{yangsb@lzu.edu.cn}


\begin{thebibliography}{999}

\vspace{-0.3cm}

\bibitem{acdh04} P. Auscher, T. Coulhon, X.T. Duong and S. Hofmann,
Riesz transform on manifolds and heat kernel regularity,
{\em Ann. Sci. \'Ecole Norm. Sup.} (4) 37 (2004), 911-957.


\vspace{-0.3cm}
\bibitem{agg97}
C. Amrouche, V. Girault and J. Giroire, Dirichlet and Neumann exterior problems for the
$n$-dimensional Laplace operator: an approach in weighted Sobolev spaces,
{\em J. Math. Pures Appl. (9)} 76 (1997), 55-81.


\vspace{-0.3cm}
\bibitem{Bo00}
K. Bogdan,
Sharp estimates for the Green function in Lipschitz domains,
{\em J. Math. Anal. Appl.} 243 (2000), 326-337.

\vspace{-0.3cm}
\bibitem{cch06} G.  Carron, T. Coulhon and A. Hassell, Riesz transform and $L^p$-cohomology for manifolds with Euclidean ends,
{\em Duke Math. J.} 133 (2006), 59-93.

\vspace{-0.3cm}
\bibitem{cks10} Z.Q. Chen, P. Kim and R.M. Song,
Heat kernel estimates for the Dirichlet fractional Laplacian,
{\em J. Eur. Math. Soc. (JEMS)} 12 (2010),  1307-1329.


\vspace{-0.3cm}
\bibitem{CW91}
F. Christ and M. Weinstein,
Dispersion of small amplitude solutions of the generalized Korteweg-de Vries equation, {\em J. Funct. Anal.}  100 (1991), 87-109.


\vspace{-0.3cm}
\bibitem{davies87}
E.B. Davies, The equivalence of certain heat kernel and Green function bounds,
{\em J. Funct. Anal.} 71 (1987), 88-103.

\vspace{-0.3cm}
\bibitem{davies89}
E.B. Davies,
{\em Heat kernels and spectral theory},
Cambridge Tracts in Math., 92,
Cambridge University Press, Cambridge, 1989, x+197 pp.


\vspace{-0.3cm}
\bibitem{davies97}
E.B. Davies, Non-Gaussian aspects of heat kernel behaviour, {\em J. London Math. Soc. (2)} 55 (1997), 105-125.


\vspace{-0.3cm}
\bibitem{DEK18} H.J. Dong, L. Escauriaza and S. Kim,  On  $C^1$, $C^2$,
and weak type-$(1,1)$ estimates for linear elliptic operators: part II,
{\em Math. Ann.} 370 (2018), 447-489.




\vspace{-0.3cm}
\bibitem{Ev10} L. Evans,
{\em Partial differential equations},
Grad. Stud. Math., 19,
American Mathematical Society, Providence, RI, 2010, xxii+749 pp.

\vspace{-0.3cm}
\bibitem{FGS86} E.B. Fabes, N. Garofalo and S. Salsa, A backward Harnack inequality and Fatou theorem for
nonnegative solutions of parabolic equations,
{\em Illinois J. Math.} 30 (1986), 536-565.

\vspace{-0.3cm}
\bibitem{Fu67}
D. Fujiwara, Concrete characterization of the domains of fractional powers of some elliptic differential operators of the second order,
{\em Proc. Japan Acad.} 43 (1967), 82-86.

\vspace{-0.3cm}
\bibitem{gsc02} A. Grigor'yan and L. Saloff-Coste,
 Dirichlet heat kernel in the exterior of a compact set,
{\em Comm. Pure Appl. Math.} 55 (2002), 93-133.

\vspace{-0.3cm}
\bibitem{gr67}
P. Grisvard, Caract\'erisation de quelques espaces d'interpolation, {\em Arch. Rational Mech. Anal.} 25 (1967), 40-63.


\vspace{-0.3cm}
\bibitem{gsa11} P. Gyrya and L. Saloff-Coste, Neumann and Dirichlet heat kernels in inner uniform domains,
{\em Ast\'erisque} 336 (2011), viii+144 pp.


\vspace{-0.3cm}
\bibitem{gw82}
M. Gr\"{u}ter and K.O. Widman, The Green function for uniformly elliptic equations,
{\em Manuscripta Math.} 37 (1982), 303-342.


\vspace{-0.3cm}
\bibitem{hs09} A. Hassell and A. Sikora, Riesz transforms in one dimension,
{\em Indiana Univ. Math. J.} 58 (2009), 823-852.


\vspace{-0.3cm}
\bibitem{I10}
O. Ivanovici, On the Schr\"odinger equation outside strictly convex obstacles,
{\em Anal. PDE} 3 (2010), 261-293.


\vspace{-0.3cm}
\bibitem{JL24} R. Jiang and F.H. Lin, Riesz transform on exterior Lipschitz domains and applications,
{\em Adv. Math.} 453 (2024), 109852, 48 pp.

\vspace{-0.3cm}
\bibitem{JY24} R. Jiang and S. Yang,
Some remarks on Riesz transform on exterior Lipschitz domains,
{\em Forum Math. Sigma} 13 (2025), Paper No. e58. 



\vspace{-0.3cm}
\bibitem{KV13} R. Killip and M. Visan, {\em Nonlinear Schr\"odinger equations at critical regularity},
Clay Math. Proc., 17, American Mathematical Society, Providence, RI, 2013, 325-437.


 \vspace{-0.3cm}
\bibitem{KVZ16} R. Killip, M. Visan and X.Y. Zhang, Riesz transforms outside a convex obstacle,
{\em Int. Math. Res. Not. IMRN} 2016, 5875-5921.


\vspace{-0.3cm}

\bibitem{kvz16b} R. Killip, M. Visan and X.Y. Zhang, Quintic NLS in the exterior
of a strictly convex obstacle, {\em Amer. J. Math.} 138 (2016), 1193-1346.

\vspace{-0.3cm}

\bibitem{lsz12} D. Li, H. Smith and X. Zhang, Global well-posedness and
scattering for defocusing energy-critical NLS in the exterior of balls with radial data, {\em Math. Res. Lett.} 19 (2012), 213-232.


\vspace{-0.3cm}
\bibitem{ly86} P. Li and S.T. Yau, On the parabolic kernel of the
Schr\"odinger operator, {\em Acta Math.} 156 (1986), 153-201.

\vspace{-0.3cm}
\bibitem{Mo70} J. Moser,
A sharp form of an inequality by N. Trudinger,
{\em Indiana Univ. Math. J.} 20 (1970/71), 1077-1092.

\vspace{-0.3cm}
\bibitem{sw23}Z. Shen and J. Wallace, Uniform $W^{1,p}$ estimates and large-scale regularity for
Dirichlet problems in perforated domains,
{\em J. Funct. Anal.} 285 (2023), 110118, 33 pp.


\vspace{-0.3cm}
\bibitem{siw01} A. Sikora and J. Wright, Imaginary powers of Laplace operators,
{\em Proc. Amer. Math. Soc.} 129 (2001), 1745-1754.

\vspace{-0.3cm}
\bibitem{stw71} E.M. Stein and G. Weiss,
{\em Introduction to Fourier Analysis on Euclidean Spaces},
Princeton Math. Ser., No. 32,
Princeton University Press, Princeton, NJ, 1971, x+297 pp.

\vspace{-0.3cm}
\bibitem{se72} R. Seeley, Interpolation in $L^p$ with boundary conditions, {\em Studia Math.} 44 (1972), 47-60.


\vspace{-0.3cm}
\bibitem{Ta20}M.E. Taylor,
{\em Tools for PDE. Pseudodifferential Operators, Paradifferential Operators, and Layer Potentials},
Math. Surveys Monogr., 81, American Mathematical Society, Providence, RI, 2000, x+257 pp.

\vspace{-0.3cm}
\bibitem{Tr01}
H. Triebel, {\em The Structure of Functions},
Monographs in Mathematics 97,
Birkh\"auser Verlag, Basel, 2001, xii+425 pp.

\vspace{-0.3cm}
\bibitem{Ver84}
G. Verchota, Layer potentials and regularity for the Dirichlet problem for Laplace's equation
in Lipschitz domains, {\em J. Funct. Anal.} 59 (1984), 572-611.

\vspace{-0.3cm}
\bibitem{zha02}
Q.S. Zhang, The boundary behavior of heat kernels of Dirichlet Laplacians,
{\em J. Differential Equations} 182 (2002), 416-430.


\vspace{-0.3cm}
\bibitem{zha03}Q.S. Zhang,
 The global behavior of heat kernels in exterior domains,
{\em J. Funct. Anal.} 200 (2003), 160-176.

\vspace{-0.3cm}
\bibitem{zhao86}
Z. Zhao, Green's function for Schr\"odinger operators and continued Feynman-Kac gauge,
{\em J. Math. Anal. Appl.} 116 (1986), 309-334.

\end{thebibliography}
\end{document}